\newtheorem{assumption}{\sc Assumption}[section]
\newcommand{\GtG}{{\Gamma\shortrightarrow\Gamma}}
\newcommand{\Gtg}{{\Gamma\shortrightarrow\gamma}}
\newcommand{\gtG}{{\gamma\shortrightarrow\Gamma}}
\newcommand{\gtg}{{\gamma\shortrightarrow\gamma}}
\newcommand{\BDdudn}{\left(C_uC_\Gamma {k^{\beta-1/2}}J(k)\e^{-\tau_\Gamma p_\Gamma} +\left[1+C_\calG(k)\right]C_\gamma(k)\e^{-\tau_\gamma(k) p_\gamma}\right)}
\newcommand{\href}[2]{#2}
\newcommand{\url}[1]{#1}
\begin{document}

\newcommand{\DOFshp}{{N_\gamma}}
\newcommand{\DOFsHNA}{{N_\Gamma}}
\newcommand{\DOFsTotal}{N}

\newcommand{\numSides}{{\mathcal{N}_\Gamma}}
\newcommand{\numScats}{{\mathcal{N}_\gamma}}

\newcommand{\Vhp}{V_{\DOFshp}^{{hp}}(\gamma)}
\newcommand{\VHNA}{V_{\DOFsHNA}^{\mathop{\rm HNA}}(\Gamma)} 
\newcommand{\V}{V_{\DOFsTotal}^{\mathop{\rm HNA}^*}(\Gamma,\gamma)}

\newcommand{\bigSol}{v_\Gamma^{\DOFsHNA}}
\newcommand{\smlSol}{v_\gamma^{\DOFshp}}

\newcommand{\numLayersj}{{n_j}}
\newcommand{\numLayers}{n}

\newcommand{\Nought}{\DOFsTotal_0}
\newcommand{\MSM}{u_{\max}(k)}
\newcommand{\uinc}{u^i}

\newcommand{\resub}[1]{{\color{black}#1}}

\title{A high frequency boundary element method for scattering by a class of multiple  obstacles
}
\shorttitle{High frequency BEM for multiple obstacles}
\author{%
	{\sc
		Andrew Gibbs\resub{\thanks{Corresponding author. Email: andrew.gibbs@ucl.ac.uk},\\[2pt]
		Dept. of Mathematics, University College London, WC1H 0AY, UK}\\
		{\sc and}\\[6pt]
		Simon \resub{N.} Chandler-Wilde\thanks{Email: s.n.chandler-wilde@reading.ac.uk},\\[2pt]
		Dept. of Mathematics and Statistics, University of Reading, RG6 6AX, UK\\
			{\sc and}\\[6pt]
			Ste\resub{phen} Langdon\resub{\thanks{Email: stephen.langdon@brunel.ac.uk}\\[2pt]
			Dept. of Mathematics, Brunel University, London, UB8 3PH, UK\\
			{\sc and}\\[6pt]}
		Andrea Moiola\thanks{Email: andrea.moiola@unipv.it},\\[2pt]
		Dept. of Mathematics, University of Pavia, 27100, Italy}
}

\shortauthorlist{A. Gibbs \emph{et al.}}

\maketitle

\begin{abstract}
	{We propose a boundary element method for problems of time-harmonic acoustic scattering by multiple obstacles in two dimensions, at least one of which is a convex polygon. By combining a Hybrid Numerical Asymptotic (HNA) approximation space on the convex polygon with standard polynomial-based approximation spaces on each of the other obstacles, we show that the number of degrees of freedom required in the HNA space to maintain a given accuracy needs to grow only logarithmically with respect to the frequency, as opposed to the (at least) linear growth required by standard polynomial-based schemes.  
		\resub{This method is thus most effective when the convex polygon is many wavelengths in diameter and the small obstacles have a combined perimeter comparable to the problem wavelength.}
	}
	{Helmholtz, high frequency, multiple scattering, integral equations, BEM, hp discretisation, HNA method.}
\end{abstract}


\newcommand{\smoothGuys}{BrGeMo:04,DoGrSm:07,EcOz:17,Ec:18,EcEr:19}

\section{Introduction}\label{s:intro}
Standard finite or boundary element methods for wave scattering problems, with piecewise polynomial approximation spaces, typically require a{t least a} fixed number of degrees of freedom per wavelength to maintain accuracy as the frequency of the incident wave increases. 
This dependence can lead to a requirement for an excessively large number of degrees of freedom at high frequencies.

For certain geometries, the Hybrid Numerical Asymptotic (HNA) approach (see, e.g., \citet{ACTA} and the references therein) overcomes this restriction by absorbing the high frequency asymptotic behaviour into the approximation space. This is implemented via a Boundary Element Method (BEM), which is particularly effective as the high frequency behaviour need only be captured on the surface of the obstacle. \resub{When constructing an HNA method, a key ingredient is an understanding of the high frequency asymptotics of the underlying physical problem. Such methods are well-studied for strictly convex smooth scatterers, see for example \citet{\smoothGuys}, and the many references therein. This paper is instead concerned with HNA for polygonal scatterers, specifically extending this method to multiple obstacles. We discuss the relevant literature in detail now.}
 
 In contrast with standard BEM, in which the number of degrees of freedom {(DOFs)} required to accurately represent the solution depends linearly on the frequency, the {number of DOFs needed to achieve a given accuracy} (for scattering by a convex polygon in two-dimensions) was shown to depend only {logarithmically on}
the frequency for the $h$-BEM version of HNA in \citet{ChLa:07}, this improved to the $hp$-BEM version in \citet{HeLaMe:13_}.
These ideas were extended, in \citet{ChHeLaTw:15}, to a certain class of non-convex polygons, with the high frequency asymptotics arising from re-reflections and partial illumination (shadowing) being fully {captured by a careful choice of approximation space.}
Similar ideas have been applied to penetrable obstacles in \citet{GrHeLa:15,GrHeLa:17}
and to two- and three-dimensional screens in \citet{ChHeLa:14} and \citet{La:15} respectively.
All of these methods are, broadly speaking, for single obstacles and for plane wave incidence (although an extension to {other incident fields}
is discussed in Remark \ref{re:MSHNA_uigen}).
In this paper we extend the HNA method to a class of more general multiple scattering configurations. 
Although here we focus on the case where at least one of the obstacles is a convex polygon, \resub{the ideas we present may be applied to the same problem where this obstacle is (for example) strictly convex and smooth. T}he key ingredient is that this obstacle be amenable, for the corresponding single scattering problem, to solution by an HNA-BEM.

Problems of high frequency scattering by one large relatively simple obstacle and one (or many) small obstacle(s) are potentially of practical \resub{interest. An approach used in \citet{LeLuSa:17} and \citet{Pe..:17} for such problems} is to appeal to high frequency asymptotics on the large obstacle, via Geometrical/Physical Optics approximation, and approximate the solution on (or in some neighbourhood of) the small components using a standard BEM/FEM.
This approximation works well at sufficiently high frequencies, but ignores diffracted waves emanating from the large obstacle, and so is not controllably accurate across all frequencies. 
Moreover, a Geometrical Optics approach will include a ray-tracing algorithm, which typically requires that the multiple scattering problem is solved \emph{iteratively}\resub{, see \citet{EcRe:09,AnBoEcRe:10, GeBrRe:05}}. This involves reformulation as a Neumann series consisting 
entirely of operators on a single scatterer. More generally, iterative approaches are common in multiple scattering problems and 
work well for certain configurations. 
However, the iterative approach cannot be applied to all such problems: the Neumann series will diverge for cases where the separation of the obstacles is {too} small.
 \resub{A method which accelerates the rate of convergence in the Neumann series is presented in \citet{BoEcRe:16}.}

In this paper, we present a method which is particularly effective for high-frequency time-harmonic scattering by one large obstacle and one (or many) small obstacle(s). \resub{Specifically, since we use an oscillatory basis on the large obstacle, this can be many wavelengths long. On the small obstacle(s), we use a piecewise-polynomial basis, so the method is most effective when their combined perimeter is comparable to one wavelength.}
In contrast to the other methods currently available for similar problems, the method we present in this paper is controllably accurate and does not need to be solved iteratively, whilst the only constraint on the separation of the obstacles is that they must be $O(\lambda)$ apart, where $\lambda$ denotes wavelength, hence the obstacles may be very close together at high frequencies.

\subsection{Outline of the paper}
In \S\ref{s:prob_stat} we describe in detail the class of multiple scattering problem we are aiming to solve. 

In \S\ref{ss:bdry_rep} we extend the representation on which the HNA-BEM for a single convex polygon is based \resub{\citep{ChLa:07,HeLaMe:13}}, to account for the contribution to the solution from neighbouring scatterers. \resub{In particular, we derive a representation \eqref{dudn_rep}, which decomposes the Neumann trace on the large obstacle into the sum of a physical optics term, the diffracted waves, and the contribution arising from interaction with the small obstacles. In Theorem \ref{Mu_bdMS} we bound the Helmholtz solution in the complement of the scatterers with bounding constant algebraic in the wavenumber. We describe the singular behaviour of the envelope of the diffracted waves in Theorem \ref{th:big_bounds}.
	
In \S\ref{s:HNAspace} we construct an approximation space enriched with oscillatory basis functions, designed to represent the solution on the large obstacle with a number of degrees of freedom that does not need to increase significantly as frequency grows. We also describe some potential advantages of using an approximation space based on a single mesh, as opposed to the overlapping meshes of \citet{ChHeLa:14, ChLa:07,HeLaMe:13_}.
In addition, in \S\ref{s:hpBEM} we define a (standard) piecewise-polynomial space on the small scatterer. 
 Conditions for exponential convergence of the best approximation in terms of polynomial degree on the large and small scatterers are given by Theorem \ref{co:bestApproxGamma} and Proposition \ref{co:hpBAE} respectively.

In \S\ref{s:Galerkin_method} a Galerkin method using this approximation space is outlined, alongside related error estimates of the total field and far-field pattern. Numerical results for an implementation of this method are presented in~\S\ref{s:results}. While Theorem \ref{th:anz_approx_bd} ensures exponential convergence when the big scatterer is a convex polygon and the small scatterers are analytic, \S\ref{s:results} demonstrates that exponential convergence is still possible when the small scatterers have corners.
}

Finally, in the appendix, we introduce an alternative boundary integral equation formulation, which is provably coercive under certain geometric constraints. This gives us explicit quasi-optimality estimates, which when combined with results in earlier sections could be used to give explicit error estimates for a certain class of multiple scattering configurations.

\section{Problem statement}
\label{s:prob_stat}
We consider the two-dimensional problem of time-harmonic {acoustic} scattering by $\numScats+1$ sound-soft scatterers,
at least one of which is an $\numSides$-sided convex polygon. 
In addition to this convex polygon, we assume that the other $\numScats$ obstacles are pairwise disjoint with Lipschitz piecewise-$C^1$ boundary. 
Denote the interior of the convex polygon by $\resub{\Omega}\subset\R^2$ {and its boundary by}
$\Gamma:=\partial \resub{\Omega}$. 
We denote by $\Gamma_j$ the $j$th side of $\Gamma$, for $j=1,\ldots,\numSides$. 
The bounded open set
$\resub{\omega}:=\bigcup_{i=1}^{\numScats}{\resub{\omega_i} \subset \R^2\setminus \overline{\resub{\Omega}}}$ represents the collection of the $\numScats$ other obstacles, which are denoted $\resub{\omega_i}$, for $i=1,\ldots,\numScats$. 
We denote the combined Lipschitz boundary of these by $\gamma:=\partial \resub{\omega}$.
The unbounded exterior domain is denoted $D:=\R^2\setminus(\overline{\resub{\Omega}}\cup\overline{\resub{\omega}})$, with boundary $\partial D=\Gamma\cup\gamma$. The normal derivative operator (or Neumann trace) is defined as $\partial /\partial \bfn:=\bfn\cdot\nabla$, in which $\bfn=(n_1,n_2)$ denotes the unit 
	normal directed into $D$; we denote also $\bfn_j:=\bfn|_{\Gamma_j}$ and $\bfn_\gamma:=\bfn|_\gamma$.
We assume that the distance between $\resub{\Omega}$ and $\resub{\omega}$ is positive, so that $\deD$ is a Lipschitz boundary. A simple example of a geometric configuration that fits inside of this framework is depicted in Figure~\ref{fig:diag}. We note that throughout the paper, it is the quantities $\Gamma$ and $\gamma$ which are used most frequently.

\begin{figure}[htb]
\centering
\definecolor{zzttqq}{rgb}{0,0,0}
\begin{tikzpicture}[line cap=round,line join=round,>=triangle 45,x=1.0cm,y=1.0cm]
\clip(2.,1.5) rectangle (11.,8.5);
\fill[color=zzttqq,fill=zzttqq,fill opacity=0] (2.4971256782692848,4.023866330078361) -- (3.7440277156113346,6.517670404762459) -- (6.981597917832797,6.758300622495135) -- (5.625318508794077,2.0988245882169516) -- cycle;
\draw [color=zzttqq] (2.4971256782692848,4.023866330078361)-- (3.7440277156113346,6.517670404762459);
\draw [color=zzttqq] (3.7440277156113346,6.517670404762459)-- (6.981597917832797,6.758300622495135);
\draw [color=zzttqq] (6.981597917832797,6.758300622495135)-- (5.625318508794077,2.0988245882169516);
\draw [color=zzttqq] (5.625318508794077,2.0988245882169516)-- (2.4971256782692848,4.023866330078361);
\draw [shift={(8.809883752518017,4.518568294018614)},color=zzttqq,fill=zzttqq,fill opacity=0]  (0,0) --  plot[domain=-4.253531723918849:0.9270493819054484,variable=\t]({1.*0.4729949160773011*cos(\t r)+0.*0.4729949160773011*sin(\t r)},{0.*0.4729949160773011*cos(\t r)+1.*0.4729949160773011*sin(\t r)}) -- cycle ;
\draw [->] (10.,8.) -- (8.81913776233687,6.933304417209809);

{
	\draw  (7.5,2.8) circle [radius=0.4];
	\draw (7.5,2.8) node{$\resub{\omega_2}$};
	\draw (8.2,2.7)node{${\gamma_2}$};}

\draw (4.5,5) node{$\resub{\Omega}$};
\draw (6.6,4.2) node{$\Gamma_1$};
\draw (4.8,6.9) node{$\Gamma_2$};
\draw (2.7,5.2) node{$\Gamma_3$};
\draw (3.9,2.9) node{$\Gamma_4$};

{\draw (8.75,4.35) node{$\resub{\omega_1}$};
	\draw (9.5,4.25) node{$\gamma_1$};}

\draw (9.7,7.3) node{$\bfd$};
\draw (2.2,7.3) node{$D$};

\end{tikzpicture}
\caption{Problem consisting of a convex \emph{four}-sided polygon (hence $\numSides=4$) and \emph{two} other obstacles (hence $\numScats=2$).}\label{fig:diag}
\end{figure}

We aim to solve the following boundary value problem (BVP): given the incident plane wave
\begin{equation}\label{eq:PW}
u^i(\bfx):=\e^{\imag k \bfx\cdot\bfd},\qquad \bfx\in\R^2,
\end{equation}
where $k:=2\pi/\lambda>0$ denotes the wavenumber (for wavelength $\lambda$) and $\bfd\in\R^2$ is a unit direction vector, determine the total field $u\in C^2(D)\cap C(\bar D)$ such that
\begin{align}
\Delta u+k^2u=0&\qquad\text{in }D,\label{Helmholtz}\\
u=0&\qquad\text{on }\partial D=\Gamma\cup\gamma\label{BC}
\end{align}
and $u^s:=u-u^i$ satisfies the Sommerfeld radiation condition \citet[(3.62)]{CoKr:13}
\begin{equation}\label{SRC}
\left(\pdiv{}{r}-\imag k \right)u^s(\bfx) = o(r^{-1/2}),\quad\text{as }r:=|\bfx|\rightarrow\infty.
\end{equation}
Problems for a broader class of incident field $u^i$ are discussed briefly in Remark \ref{re:MSHNA_uigen}.

The BVP \eqref{Helmholtz}--\eqref{SRC} can be reformulated as a boundary integral equation (BIE). 
We denote the single layer potential
$S_k:L^2(\partial D)\rightarrow C^2(D)$ by
\begin{equation}\label{def:Sk}
S_k\varphi(\bfx):=\int_{\partial D}\Phi_k(\textbf{x},\textbf{y})\varphi(\textbf{y})\dd{s}(\textbf{y}),\quad\bfx \in D,
\end{equation}
where $\Phi_k(\bfx,\bfy):=(\imag/4)H^{(1)}_0(k|\bfx-\bfy|)$ is the fundamental solution of \eqref{Helmholtz}, in which $H^{(1)}_0$ denotes the Hankel function of the first kind and order zero. 
If $u$ satisfies the BVP \eqref{Helmholtz}--\eqref{SRC},
then  $\pdivl{u}{\bfn}\in L^2(\partial D)$ and the following Green's representation holds (see, e.g.,\ \citet[Theorem~2.43]{ACTA})
\begin{equation}\label{Green}
u=u^i-S_k\pdiv{u}{\bfn}\quad\text{in } D.
\end{equation}

\begin{definition}[Combined potential operator]\label{def:BIE_A}
The standard combined potential operator $\calA_{k,\eta}:L^2(\partial D)\rightarrow L^2(\partial D)$ (see, e.g.,\ \citet{CoKr:13,ACTA}) is defined by
\[
\calA_{k,\eta}:=\frac12\calI+\calD_k'-\imag\eta\calS_k,
\]
where $\calI$ is the identity operator, $\eta\in\R\setminus\{0\}$ is a coupling parameter,
\[
\calS_k\varphi(\bfx):=\int_{\partial D}\Phi_k(\bfx,\bfy)\varphi(\bfy)\dd{s}(\bfy),\quad\bfx \in \partial D,\quad\varphi\in L^2(\partial D),
\]
denotes the single layer operator and
\[
\calD_k'\varphi(\bfx):=\int_{\partial D}\pdiv{\Phi_k(\bfx,\bfy)}{\bfn(\bfx)}\varphi(\bfy)\dd{s}(\bfy),\quad\bfx \in \partial D,\quad\varphi\in L^2(\partial D),
\]
denotes the adjoint of the double-layer operator.
\end{definition}

From \eqref{Green}, the BVP \eqref{Helmholtz}--\eqref{BC} can be reformulated as a BIE \citep[(2.69), (2.114)]{ACTA}
\begin{equation}\label{BIE}
\calA_{k,\eta}\pdiv{u}{\bfn}=f_{k,\eta},\quad\text{on }\partial D,
\end{equation}
where the right-hand side data $f_{k,\eta}\in L^2(\partial D)$ is
\begin{equation}\label{def:f}
f_{k,\eta}=\left(\pdiv{}{\bfn}-\imag\eta \right)u^i.
\end{equation}
It follows from \citet[Theorem~2.27]{ACTA} that $\calA_{k,\eta}$ is invertible. 
\resub{A standard variational form of \eqref{BIE} is
	\begin{equation}\label{eq:var_form}
	\left(\calA_{k,\eta}\pdiv{u}{\bfn},w\right)_{L^2(\Gamma\cup\gamma)}=\left(f_{k,\eta},w\right)_{L^2(\Gamma\cup\gamma)},\quad\text{for all }w\in L^2(\Gamma\cup\gamma),
	\end{equation}
	which can be approximated by a piecewise-polynomial Galerkin BEM. Our approach differs from this in that: (i) we decompose the unknown $\pdivl{u}{\bfn}$ into a known physical optics term, a diffracted term, and a term which expresses the leading order behaviour on $\Gamma$ in terms of the solution on $\gamma$ (see \S\ref{ss:bdry_rep}); (ii) we approximate the diffracted term on $\Gamma$ using an oscillatory basis (see \S\ref{s:HNAspace}). The use of this basis is justified by the representation and regularity results in \S\ref{sec:anal}. This leads to a new variational formulation that is equivalent to \eqref{eq:var_form}. This is shown in \eqref{eq:new_var1}--\eqref{eq:new_var2} with the resulting Galerkin scheme given in equations \eqref{Galerkin_eqns1}--\eqref{Galerkin_eqns2}.}

\resub{\subsection{Geometric assumptions}}

In related literature, there appears to be no single consistent definition of the term \emph{polygon}, so we shall clarify a definition that is appropriate for this paper.
\begin{definition}[Polygon]\label{def:polygonThisThesis}
	We say $\Upsilon{\subset\R^2}$ is a polygon if it is a bounded Lipschitz open set with a boundary consisting 
	of {a finite number of} straight line segments.
\end{definition}
We note that Definition \ref{def:polygonThisThesis} permits multiple disconnected shapes, whereas other conventions in related literature do not. As we impose that $\resub{\Omega}$ is convex, it cannot consist of disconnected components. On the other hand, $\resub{\omega}$ may consist of disconnected components. Many results that follow hold for a subclass of polygons, which we define now (as in, e.g., \citet[Definition~1.1]{Sp:14}).
\begin{definition}[Non-trapping polygon]\label{def:nontrapping}
	We say that a polygon $\Upsilon$ (in the sense of Definition \ref{def:polygonThisThesis}) is non-trapping if:
	\begin{enumerate}[(i)]
		\item No three vertices of $\Upsilon$ are co-linear; 
		\item For a ball $B_R$ with radius $R>0$ sufficiently large that $\Upsilon\subset B_R$, there exists a $T(R)<\infty$ such that all billiard trajectories that start inside of $B_R\setminus\Upsilon$ at time $T=0$ and miss the vertices of $\Upsilon$ will leave $B_R$ by time $T(R)$.
	\end{enumerate} 
\end{definition}

Previous analyses of HNA methods (e.g.,  \citet{HeLaMe:13_,ChHeLaTw:15}) have \resub{instead} relied upon convergence and regularity estimates for scattering obstacles which are convex or star-shaped (introduced formally in Definition \ref{def:star_comb}), a property not enjoyed by multiple scattering configurations. \resub{However, for configurations which satisfy the conditions of Definition \ref{def:nontrapping}, bounds on the Dirichlet-to-Neumann (DtN) maps are known \citep{BaSpWu:16}, which will provide an alternative route to bounding the solution to \eqref{Helmholtz}--\eqref{SRC} in \S\ref{ss:u_max}.}

\resub{In addition to the theory of \citet{BaSpWu:16} for non-trapping polygons, we shall consider a certain class of trapping configurations, for which bounds on DtN maps were recently derived in \citet{ChSpGiSm:17}, building on the earlier work of \citet{GaMuSp:16, BaSpWu:16}. These estimates will form a key component of our numerical analysis, in particular enabling us to bound the solution to \eqref{Helmholtz}--\eqref{SRC} in \S\ref{ss:u_max}, and obtain best approximation on $\gamma$ in Proposition \ref{co:hpBAE}.} A formal definition of \resub{these so-called $(R_0,R_1)$} configurations will follow, but these may be loosely interpreted as configurations $\Upsilon$ which are star-shaped outside of some ball. There is a second ball inside of the first, whose radius is sufficiently small, and inside of which some trapping may occur.

\begin{definition}[$(R_0,R_1)$ configuration]\label{def:R0R1}
	For $0<R_0<R_1$ we say that a Lipschitz $\Upsilon$ is an $(R_0,R_1)$  configuration if there exists a $\chi\in C^3[0,\infty)$ which satisfies
	\begin{enumerate}[(i)]
		\item $\chi(|\bfx|)=0$ for $0\leq |\bfx| \leq R_0$, $\chi(|\bfx|)=1$ for $|\bfx|\geq R_1, 0<\chi(|\bfx|)<1,$ for $R_0\leq |\bfx| \leq R_1$,
		\item $0\leq\chi'(|\bfx|)\leq 4$, for $|\bfx|>0$,
	\end{enumerate}
	such that $\bfZ(\bfx)\cdot\bfn(\bfx)\geq0$ for all $\bfx\in\partial\Upsilon$ for which the normal $\bfn(\bfx)$ is defined, where
	\[
	\bfZ(\bfx):=(x_1\chi(\bfx),x_2),\quad\bfx=(x_1,x_2)\in\R^2.
	\]
	
\end{definition}

Naturally, one can rotate the coordinate system if required to ensure the above conditions hold. 
For further explanation and examples of $(R_0,R_1)$ configurations, we refer to \citet[\S1.2.1]{ChSpGiSm:17}.

\section{Representation and regularity of solution on {$\Gamma$}} \label{s:Regularity_results}
The structure of this section is as follows: In \S\ref{ss:bdry_rep} we extend the single scattering HNA ansatz of \citet[(3.5)]{ChLa:07} to a multiple scattering configuration, introducing a new operator which accounts for the other obstacle(s). 
In \S\ref{ss:u_max} we bound the solution {of the multiple scattering problem in the domain $D$}, 
a necessary component of the best approximation estimates that follow. In \S\ref{sec:anal} we show that the envelopes of the diffracted waves, which the HNA space is designed to approximate, behave similarly to the single scattering problem (under very reasonable assumptions). This means that the HNA space of \citet{HeLaMe:13_} may be used {on the convex polygon in the} multiple scattering approximation without any modification (though with a different leading order term).

\subsection{The representation formula for the Neumann trace on {$\Gamma$}}\label{ss:bdry_rep}
As in \citet[\S3]{ChLa:07}, we will extend a single side $\Gamma_j$ of $\resub{\Omega}$, and solve the resulting half-plane problem, to obtain an explicit representation for $\pdivl{u}{\bfn}$ on $\Gamma_j$ in terms of known oscillatory functions on $\Gamma_j$ and {(in contrast to \citet[\S3]{ChLa:07})} $\pdivl{u}{\bfn}$ on $\gamma$. 
This representation will form the ansatz used for the discretisation.
Throughout this section, when $u$ or $u^s$ is restricted to $\Gamma\cup\gamma$, it is assumed that the exterior trace has been taken. 
Considering a single side $\Gamma_j$ of $\resub{\Omega}$, $1\le j\le \numSides$, define $\Gamma_j^+$ and $\Gamma_j^-$ as the infinite extensions of $\Gamma_j$, each as a straight half line in the clockwise and anti-clockwise direction (about the interior $\resub{\Omega}$) respectively (see Figure \ref{fig:UHPBR}).
Denote by $U_j$ the (open)
half-plane with boundary $\Gamma_j^\infty:=\Gamma_j^-\cup\Gamma_j\cup\Gamma_j^+$, chosen such that $U_j$ does not contain $\resub{\Omega}$. 
{We informally call $U_j$ the \emph{upper} half-plane relative to $\Gamma_j$.}
On $\Gamma_j^\infty$, the unit normal $\bfn_j$ points into $U_j$.
Define the half-plane Dirichlet Green's function
\[
G_j(\bfx,\bfy):=\Phi_k(\bfx,\bfy)-\Phi_k({\tbfx^j},\bfy),\quad\bfx\neq\bfy,
\]
where ${\tbfx^j}$ is the reflection of $\bfx$ across $\Gamma_j$. 
Formally, $\bfx={\tbfx^j}$ when $\bfx\in\Gamma_j$, otherwise ${\tbfx^j}\neq\bfx$ satisfies $\text{dist}(\{\bfx\},\Gamma_j^\infty)=\text{dist}(\{{\tbfx^j}\},\Gamma_j^\infty)=\tfrac{1}{2}|\bfx-{\tbfx^j}|$. 
{It follows that
\begin{equation}\label{eq:GPhi}
\pdiv{G_j(\bfx,\bfy)}{\bfn_j(\bfy)}=2\pdiv{\Phi_k(\bfx,\bfy)}{\bfn_j(\bfy)}
\qquad\text{and}\qquad 
G_j(\bfx,\bfy)=0,
\qquad\text{for}\;\bfy\in\Gamma_j^\infty.
\end{equation}
}
We {let $B_R$ be an open ball} 
of radius $R$ centred at the origin, with $R$ chosen sufficiently large that $U_j\cap\resub{\omega}\subset B_R$, i.e.\ all the scatterers in the relative upper half-plane lie inside the ball. 

\begin{figure}[!ht]
\centering
\begin{tikzpicture}[line cap=round,line join=round,>=triangle 45,x=2.727272727272727cm,y=2.8846153846153846cm]
\clip(-1.7,-0.5) rectangle (2.7,2.1);
\draw [dash pattern=on 2pt off 2pt,domain=-1.7:2.7] plot(\x,{(-0.-0.*\x)/1.62315236268});
\draw (-0.623152362678,0.)-- (1.,0.);
\draw [dotted] (-0.623152362678,0.)-- (-1.24785257803,-0.77607524246);
\draw [dotted] (1.,0.)-- (1.38844550082,-1.09408788293);
\draw [dotted] (-1.24785257803,-0.77607524246)-- (0.352271233247,-1.94408670507);
\draw [dotted] (1.38844550082,-1.09408788293)-- (0.352271233247,-1.94408670507);
\draw [dash pattern=on 1pt off 1pt on 2pt off 4pt] (0.499990520159,0.) ellipse (5.56482521376cm and 5.88587282225cm);
\draw (0.322169371539,0.84780524774)-- (1.1680129821,1.13341393038);
\draw (1.1680129821,1.13341393038)-- (1.4354302054,0.412731602718);
\draw (1.4354302054,0.412731602718)-- (0.322169371539,0.84780524774);
\draw [dotted] (1.69002707677,-0.358751658633)-- (2.30506666008,-0.317178522042);
\draw [dotted] (2.30506666008,-0.317178522042)-- (2.41227850569,0.397890600975);
\draw [shift={(1.88122068808,2.05762187735)}] plot[domain=4.66417753593:5.02206121401,variable=\t]({1.*1.7426216788*cos(\t r)+0.*1.7426216788*sin(\t r)},{0.*1.7426216788*cos(\t r)+1.*1.7426216788*sin(\t r)});
\draw [dotted] (1.69002707677,-0.358751658633)-- (1.74694296034,0.);
\draw (2.35262191543,0.)-- (2.41227850569,0.397890600975);
\draw (1.79778709185,0.316998669863)-- (1.74694296034,0.);
\draw [rotate around={-35.7321637372:(-0.481512480006,0.684675983971)}] (-0.481512480006,0.684675983971) ellipse (1.40047009309cm and 0.680257445958cm);
\draw [->] (-0.672093534067,0.533725470984) -- (-0.809174817081,0.380988939852);
\draw [->] (1.29090405739,0.802225740292) -- (1.52442036404,0.881806170504);
\draw [->] (1.76516092891,0.113583842101) -- (1.57237094995,0.130580324526);
\draw [->] (0.639998446081,0.) -- (0.639998446081,0.173203067561);
\draw [->] (2.24367915274,0.) -- (2.24367915274,0.183858753319);
\draw (0,0.1) node {$\Gamma_j$};
\draw (-1.1,-0.1) node {$\Gamma_j^-$};
\draw (1.3,-0.1) node {$\Gamma_j^+$};
\draw (0,-0.3) node {$\resub{\Omega}$};
\draw (0.67,-0.1) node {$\bfn_j$};
\draw (2.27,-0.1) node {$\bfn_j$};
\draw (-0.4,2) node {$\partial B_R$};
\draw (-0.35,1) node {$\gamma_1$};
\draw (-0.5,0.7) node {$\resub{\omega_1}$};
\draw (-0.6,0.4) node {$\bfn_{\gamma_1}$};
\draw (0.7,1.1) node {$\gamma_{2}$};
\draw (1,0.8) node {$\resub{\omega_2}$};
\draw (1.5,0.75) node {$\bfn_{\gamma_{2}}$};
\draw (2,0.1) node {$\resub{\omega_3}$};
\draw (2,0.4) node {${\gamma_3}$};
\draw (1.65,0.2) node {$\bfn_{\gamma_3}$};
\end{tikzpicture}
\caption{Configuration with (at least) four scatterers.
The relative upper half-plane $U_j$ is the area above the line $\Gamma_j^\infty=\Gamma_j^-\cup\Gamma_j\cup\Gamma_j^+$. 
Note the intersection of $\resub{\omega_3}$ (the right-hand scatterer) with $\Gamma_j^+\subset\Gamma_j^\infty$; $\bfn_j$ points into $\resub{\omega_3}\cap U_j$ whilst $\bfn_{\gamma_3}$ points out of $\resub{\omega_3}\cap U_j$ and into $D\cap U_j$.}
\label{fig:UHPBR}
\end{figure}
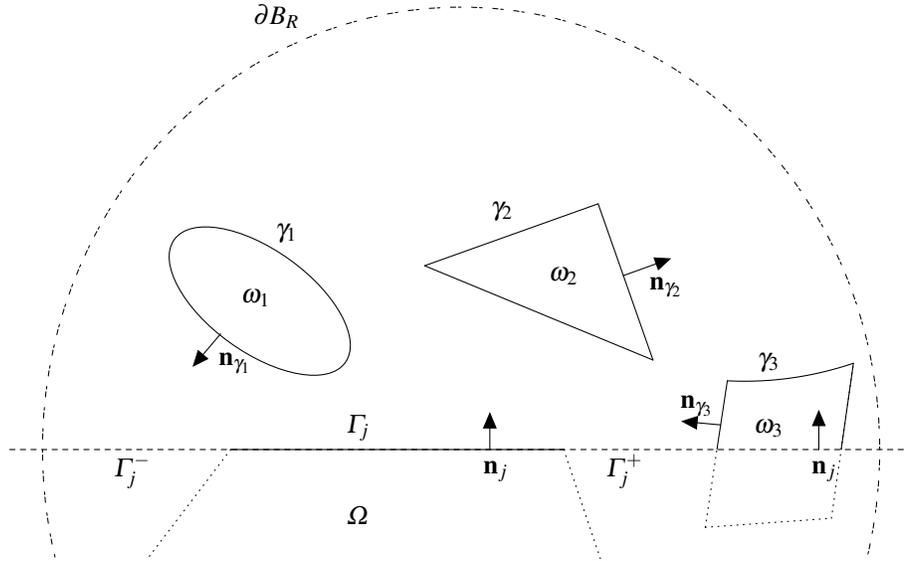

Green's second identity can now be applied to $G_j(\bfx,\cdot)$ and $u^s{=u-u^i}$ {for} 
${\bfx~\in}~D~\cap~ U_j~\cap~ B_R${, together with \eqref{eq:GPhi},} 
to obtain
\begin{align}
u^s(\bfx)=&\;2\int_{\Gamma^\infty_j\cap B_R\setminus\resub{\omega}}
\pdiv{\Phi_k(\bfx,\bfy)}{\bfn_j(\bfy)}u^s(\bfy)\dd{s(\bfy)}\nonumber\\
&+\int_{\gamma\cap U_j}\left[\pdiv{G_j(\bfx,\bfy)}{\bfn_\gamma(\bfy)}u^s(\bfy) - G_j(\bfx,\bfy)\pdiv{u^s(\bfy)}{\bfn_\gamma}
\right]\dd{s(\bfy)}\nonumber\\
&-\int_{\partial B_R\cap U_j}\left[\pdiv{G_j(\bfx,\bfy)}{r}u^s(\bfy) - G_j(\bfx,\bfy)\pdiv{u^s(\bfy)}{r}
\right]\dd{s(\bfy)},\label{G2rep1}
\end{align}
where $\pdivl{}{\bfn_j}=\bfn_j\cdot\nabla$ and $\pdivl{}{\bfn_\gamma}=\bfn_\gamma\cdot\nabla$, $\bfn_j$ and $\bfn_\gamma$ are the unit normal vector fields pointing {into} $D\cap U_j\cap B_R$ from $\Gamma^\infty_j\cap B_R\setminus\resub{\omega}$ and from $\gamma\cap U_j$, respectively, and $\pdivl{}{r}=\frac{\bfy}{|\bfy|}\cdot\nabla$ denotes the normal derivative on $\partial B_R\cap U_j$ pointing {out} of $D\cap U_j\cap B_R$.
As $R\rightarrow\infty$, the third integral vanishes by the same reasoning as in, e.g., \citet[Theorem~2.4]{CoKr:13}. 
The representation \eqref{G2rep1} then becomes
\begin{align}
u^s(\bfx)=2\int_{\Gamma^\infty_j\setminus\resub{\omega}}
\pdiv{\Phi_k(\bfx,\bfy)}{\bfn_j(\bfy)}u^s(\bfy)\dd{s(\bfy)}
+\int_{\gamma\cap U_j}\left[
\pdiv{G_j(\bfx,\bfy)}{\bfn_\gamma(\bfy)}u^s(\bfy) -G_j(\bfx,\bfy) \pdiv{u^s(\bfy)}{\bfn_\gamma}
\right]\dd{s(\bfy)},\label{us_rep}
\end{align}
for $\bfx\in U_j\setminus \resub{\omega}$. 

We now apply Green's second identity to $u^i$ and $G_j(\bfx,\bfy)$ in $U_j\cap\resub{\omega} $ and obtain, for $\bfx\in D\cap U_j$,
\begin{align}
\Bigg(\int_{\gamma\cap U_j}-&\int_{\Gamma_j^\infty\cap\resub{\omega} }\Bigg)
\left[\pdiv{G_j(\bfx,\bfy)}{\bfn(\bfy)}u^i(\bfy)-G_j(\bfx,\bfy)\pdiv{u^i}{\bfn}(\bfy)
\right]\dd{s}(\bfy)
\nonumber
\\
&=\int_{U_j\cap\resub{\omega} }\left[ 
\Delta G_j(\bfx,\bfy)u^i(\bfy)- G_j(\bfx,\bfy)\Delta u^i(\bfy)\right]\dd V(\bfy)
\label{G2_ingamma}
=0, 
\end{align}
as $u^i$ and $\Phi_k(\bfx,\cdot)$ satisfy the Helmholtz equation \eqref{Helmholtz} in $\resub{\omega}$ for $\bfx\in D\cap U_j$.
The sign of the boundary integral differs on the two parts of $\partial(U_j\cap\resub{\omega} )=(\gamma\cap U_j)\cup(\Gamma_j^\infty\cap\resub{\omega})$ 
because the normal derivative $\pdivl{}{\bfn}$ involves the outward-pointing normal vector $\bfn_{\gamma}$ on $\gamma\cap U_j$ and the inward-pointing normal $\bfn_j$ on $\Gamma_j^\infty\cap\resub{\omega}$, as depicted in Figure~\ref{fig:UHPBR}.

We then use $u^s=u-u^i$ to expand the last term in \eqref{us_rep}: for $\bfx\in D\cap U_j$
\begin{align*}
&\int_{\gamma\cap U_j} \left[\pdiv{G_j(\bfx,\bfy)}{\bfn_\gamma(\bfy)}u^s(\bfy)
-G_j(\bfx,\bfy)\pdiv{u^s(\bfy)}{\bfn_\gamma}\right]\dd{s}(\bfy)
\\
=&\int_{\gamma\cap U_j}
\Bigg[\pdiv{G_j(\bfx,\bfy)}{\bfn_\gamma(\bfy)}\big(\underbrace{u(\bfy)}_{=0}-u^i(\bfy)\big)
-G_j(\bfx,\bfy)\pdiv{(u-u^i)(\bfy)}{\bfn_\gamma}\Bigg]\dd{s}(\bfy)
\\
\overset{\eqref{G2_ingamma}}=&
-\int_{\gamma\cap U_j}G_j(\bfx,\bfy) \pdiv{u(\bfy)}{\bfn_\gamma}\dd{s}(\bfy)
+\int_{\Gamma_j^\infty\cap\resub{\omega} }
\left[-\pdiv{G_j(\bfx,\bfy)}{\bfn_j(\bfy)}u^i(\bfy)
+G_j(\bfx,\bfy)\pdiv{u^i(\bfy)}{\bfn_\gamma}\right]\dd{s}(\bfy).
\end{align*}
Substituting this expression in \eqref{us_rep} and using again {\eqref{eq:GPhi}},
we obtain a representation for~$u^s$:
\begin{align}
u^s(\bfx)=&\;2\int_{\Gamma^\infty_j\setminus\resub{\omega}}
\pdiv{\Phi_k(\bfx,\bfy)}{\bfn_j(\bfy)}u^s(\bfy)\dd{s(\bfy)}
-\int_{\gamma\cap U_j}G_j(\bfx,\bfy)\pdiv{u(\bfy)}{\bfn_\gamma}\dd{s(\bfy)}\nonumber\\
&-2\int_{\Gamma_j^\infty\cap\resub{\omega} }
\pdiv{\Phi_k(\bfx,\bfy)}{\bfn_j(\bfy)}u^i(\bfy)\dd{s(\bfy)},
\qquad\bfx\in D\cap U_j.\label{us_rep_1}
\end{align}
The final term will be non-zero only if $\Gamma_j^\infty\cap\resub{\omega} \ne \emptyset$, namely, in case one of the components of $\gamma$ is  $\Gamma_j^\infty$ (see {e.g.\ the component $\resub{\omega_3}$ in} Figure~\ref{fig:UHPBR}).

This integral representation must be combined with one for $u^i$ to construct a useful representation for $\pdivl{u}{\bfn}$ on $\Gamma$. 
The half-plane representation of \citet[\S3]{Ch:97} can be applied to upward propagating plane waves. 
We consider first the case $\bfn_j\cdot\bfd\geq0$, {which means that} 
$\Gamma_j$ is in shadow, from \citet[(3.3)]{ChLa:07}:
\[
u^i(\bfx)=2\int_{\Gamma^\infty_j}\pdiv{\Phi_k(\bfx,\bfy)}{\bfn_j(\bfy)}u^i(\bfy)\dd{s(\bfy)}, \quad\bfx\in U_j.
\]
Adding this to \eqref{us_rep_1} and taking the Neumann trace on $\Gamma_j$, we obtain a representation for the solution
\begin{align}
\pdiv{u(\bfx)}{\bfn}=&\;2\int_{\Gamma^\infty_j\setminus\resub{\omega}}\frac{\partial^2\Phi_k(\bfx,\bfy)}{\partial \bfn_j(\bfx)\partial \bfn_j(\bfy)}u(\bfy)\dd{s(\bfy)}
\nonumber
\\
&-2\int_{\gamma\cap U_j}\pdiv{\Phi_k(\bfx,\bfy)}{\bfn_j(\bfx)}
\pdiv{u(\bfy)}{\bfn_\gamma}\dd{s(\bfy)},
\qquad\bfx\in\Gamma_j,\quad \bfn_j\cdot\bfd\ge0.
\label{u_rep1}
\end{align}
For a downward-propagating wave $\bfn_j\cdot\bfd<0$, {i.e.\ when $\Gamma_j$ is illuminated by $u^i$,} we can apply the same result to the lower half-plane $\mathbb{R}^2\setminus \overline{U}_j$ (where the direction of the normal is reversed)
\[
u^i(\bfx)=-2\int_{\Gamma^\infty_j}\pdiv{\Phi_k(\bfx,\bfy)}{\bfn_j(\bfy)}u^i(\bfy)\dd{s(\bfy)}, \quad\bfx\in \mathbb{R}^2\setminus \overline{U}_j.
\]
Now define $u^r(\bfx):=-u^i(\tbfx^j)$ for $\bfx\in U_j$. 
Intuitively, $u^r$ may be considered the reflection of $u^i$ by a sound-soft line at $\Gamma_j^\infty$. 
It follows that {$\pdivl{u^r}{\bfn_j}=\pdivl{u^i}{\bfn_j}$ on $\Gamma_j^\infty$ and, for $\bfx\in U_j$,}
\[
u^r(\bfx)=2\int_{\Gamma^\infty_j}\pdiv{\Phi_k(\tbfx^j,\bfy)}{\bfn_j(\bfy)}u^i(\bfy)\dd{s(\bfy)}=
-2\int_{\Gamma^\infty_j}\pdiv{\Phi_k(\bfx,\bfy)}{\bfn_j(\bfy)}u^i(\bfy)\dd{s(\bfy)}.
\]
Rearranging this and adding $u^i$ gives
\[
u^i(\bfx)=u^i(\bfx)+u^r(\bfx)+2\int_{\Gamma^\infty_j}\pdiv{\Phi_k(\bfx,\bfy)}{\bfn_j(\bfy)}u^i(\bfy)\dd{s(\bfy)}.
\]
Summing with \eqref{us_rep_1} and taking the Neumann trace gives the representation for $\pdivl{u}{\bfn}$ on~$\Gamma_j$:
\begin{align}
\pdiv{u(\bfx)}{\bfn}=&\;2\pdiv{u^i(\bfx)}{\bfn}+2\int_{\Gamma^\infty_j\setminus\resub{\omega}}\frac{\partial^2\Phi_k(\bfx,\bfy)}{\partial \bfn_j(\bfx)\partial \bfn_j(\bfy)}u(\bfy)\dd{s(\bfy)}
\nonumber
\\
&-2\int_{\gamma\cap U_j}\pdiv{\Phi_k(\bfx,\bfy)}{\bfn_j(\bfx)}
\pdiv{u(\bfy)}{\bfn_\gamma}\dd{s(\bfy)},\qquad\bfx\in\Gamma_j,\quad \bfn_j\cdot\bfd<0,
\label{u_rep2}
\end{align}
{where we used again \eqref{eq:GPhi} and $\pdivl{u^r}{\bfn_j}=\pdivl{u^i}{\bfn_j}$ on $\Gamma_j$.}

The representation \eqref{u_rep1}--\eqref{u_rep2} may be viewed as a correction to the Physical Optics approximation for a single scatterer, which is defined as
\begin{equation}\label{POA}
\Psi(\bfx):=\left\{\begin{array}{cl}
2{\partial u^i(\bfx)}/{\partial \bfn}, & \bfx\in\Gamma_j\subset\Gamma: \bfn_j(\bfx)\cdot\bfd<0,\\
0, &\bfx\in\Gamma_j\subset\Gamma: \bfn_j(\bfx)\cdot\bfd\geq0.
\end{array}\right.
\end{equation}
Specifically, this correction can be split into two parts. 
The first integral of \eqref{u_rep1} and \eqref{u_rep2} represents the waves diffracted by the corners of $\Gamma$ (diffraction is ignored by the Physical Optics approximation
), whilst the second integral represents the correction to the waves reflected by the sides of $\Gamma$, as a result of the presence of $\resub{\omega}$. 
Unless the distance between the scatterers is sufficiently large, it is reasonable to expect the second correcting term to be not negligible\resub{; see \citet[Lemma~4.2]{Gi:17} for a precise quantification of this fact.}

We now write more explicitly the integral representation \eqref{u_rep1}--\eqref{u_rep2} in terms of the parametrisations of the segments $\Gamma_j$ and of their extensions $\Gamma_j^\infty$.
From the standard properties of Bessel functions (see, e.g., \citet[\S10]{DLMF}), we have that for $\bfx\in\Gamma_j$, $\bfy\in\Gamma^\pm_j\setminus\resub{\omega}$,
\begin{equation*}
\frac{\partial^2\Phi_k(\bfx,\bfy)}{\partial \bfn(\bfx)\partial \bfn(\bfy)}=\frac{\imag H^{(1)}_1(k|\bfx-\bfy|)}{4|\bfx-\bfy|}=\frac{\imag k^2}{4}\e^{\imag k |\bfx-\bfy|}\mu(k|\bfx-\bfy|),
\quad\text{where} \quad\mu(z):=\e^{-\imag z}\frac{H^{(1)}_1(z)}z,
\end{equation*}
see \citet[(3.6)]{ChLa:07}. 
To make use of this identity, we parametrise $\Gamma$ by
\begin{equation}\label{Gamma_param}
{\bfx_\Gamma}(s) = \mathbf{P}_j+\frac{s-\tL_{j-1}}{L_j}(\textbf{P}_{j+1}-\mathbf{P}_j),\quad s\in [\tL_{j-1},\tL_{j}),\quad j=1,\ldots,\numSides,
\end{equation}
where $L_j$ is the length of the $j$th side, $\mathbf{P}_j$ is the $j$th corner of $\Gamma$, and $\tL_j:=\sum_{\ell=1}^{j}{L}_\ell$ is the arc length up to the $(j+1)$th corner, with $\mathbf{P}_{{\numSides}+1} := \mathbf{P}_1$.
We will also denote by $L_\Gamma:=\tL_{\numSides}$ the total length of $\Gamma$.
Similarly we parametrise $\Gamma_j^-\cup\Gamma_j\cup\Gamma_j^+$ by
\[
{\bfy_j}(s) = \textbf{P}_j+\frac{s-\tL_{j-1}}{L_j}(\textbf{P}_{j+1}-\textbf{P}_j),\quad s\in \R,\quad j=1,\ldots,\numSides.
\]
We use \eqref{u_rep1}--\eqref{u_rep2} to represent the solution on a single side $\Gamma_j$, extending the ansatz of \citet{ChLa:07,HeLaMe:13_} to multiple scattering problems
\begin{align}\label{dudn_rep}
&
\pdiv{u}{\bfn}\big(\bfx_\Gamma(s)\big)=
\Psi\big(\bfx_\Gamma(s)\big)+v_j^+(s-\tL_{j-1})\e^{\imag k s}+v_j^-(\tL_{j}-s)\e^{-\imag k s}+\calG_{\gtG_j}
\left[\left.\pdiv{u}{\bfn}\right|_\gamma\right](\bfx_\Gamma(s)),
\nonumber\\&
\hspace{20mm} s\in\left[\tL_{j-1},\tL_{j}\right],\;j=1,\ldots,\numSides;
\end{align}
we shall now discuss each term in the ansatz separately.
Here $\Psi$ is the {Physical} Optics approximation \eqref{POA}, with the envelopes of the dif\-fract\-ed waves on each side defined by
\begin{align}
v_j^+(s)&:=\frac{\imag k^2}{2}\int_{(0,\infty)\setminus Z_j^+}
\mu\big(k(s+t)\big)\e^{\imag k (t-\tL_{j-1})}u\big(\bfy_j(\tL_{j-1}-t)\big)\dd{t},\ s\in[0,L_j],\label{vp}\\
v_j^-(s)&:=\frac{\imag k^2}{2}\int_{(0,\infty)\setminus Z_j^-}
\mu\big(k(s+t)\big)\e^{\imag k (\tL_{j}+t)}u\big(\bfy_j(\tL_{j}+t)\big)\dd{t},\ s\in[0,L_j],\label{vm}
\end{align}
where $Z_j^+:=\{ t\in \R:\bfy_j(\tL_{j-1}-t)\in\gamma\}$ and $Z_j^-:=\{ t\in \R:\bfy_j(\tL_{j}+t)\in\gamma\}$ are used to exclude from the integral the points inside $\resub{\omega}$ (as is the case for $\resub{\omega_3}$ of Figure \ref{fig:UHPBR}), to remain consistent with \eqref{u_rep1}--\eqref{u_rep2}. 
The \emph{interaction operator} $\calG_{\gtG_j}:L^2(\gamma)\to L^2(\Gamma_j)$ used in \eqref{dudn_rep}  is based on the final term of \eqref{u_rep1}--\eqref{u_rep2}, and is defined by
\begin{equation}\label{G_def}
\calG_{\gtG_j}\varphi(\bfx):=
-2\int_{\gamma\cap U_j}\pdiv{\Phi_k(\bfx,\bfy)}{\bfn_j(\bfx)}
\varphi(\bfy)\dd{s(\bfy)},\quad\bfx\in\Gamma_j\subset\Gamma,
\end{equation}
for $\varphi\in L^2(\gamma)$. 
We extend this definition to $\calG_\gtG:L^2(\gamma)\to L^2(\Gamma)$ as 
\begin{equation}\label{def:fullG}
\calG_\gtG\varphi :=
\calG_{\gtG_j}\varphi\quad\text{ on }\Gamma_j \quad\text{ for }j=1,\ldots,\numSides, \text{ and } \varphi\in L^2(\gamma).
\end{equation}

\begin{remark}\label{rem:vpm_not_the_same}
The ansatz \eqref{dudn_rep} is an extension of \citet[(3.9)]{ChLa:07} and \citet[(3.2)]{HeLaMe:13_}, with an additional term which relates the solution on $\Gamma$ to the solution on $\gamma$. 
It is important to note that this additional term is not the only term influenced by the presence of $\gamma$ and that one cannot solve for $v_\pm$ on a single scatterer and then add the $\calG_\gtG[\pdivl{u}{\bfn|_\gamma}]$ term. 
The reason for this is clear from \eqref{vp}--\eqref{vm}: even if $Z_j^\pm$ were of measure zero, so that the equations for \eqref{vp}--\eqref{vm} were identical to the case of a single scatterer, the integral contains $u$, which depends on the configuration $\partial D$.
Intuitively this makes sense,
diffracted waves emanating from the corners of $\Gamma$ will also be influenced by the presence of additional scatterers. 
\end{remark}

Many of the bounds which follow are explicit only in $k$ or the parameters which determine meshwidth or polynomial degree of an approximation space. Henceforth we will use $A\lesssim B$ to mean $A\leq CB$, where $C$ is a constant that depends only on the geometry of $\Upsilon$. 
To gauge the size of the contribution to the reflected waves on $\Gamma$ arising from the presence of $\resub{\omega}$, we require the following bound on the operator $\calG_\gtG$. 
\begin{lemma}\label{le:G_bound}
For $\partial D=\Gamma\cup\gamma$ with $\Gamma$ and $\gamma$ disjoint, we have the following bound on the interaction operator $\calG_\gtG$ defined in \eqref{def:fullG}, given $k_0>0$:
\begin{equation*}
\|\calG_\gtG\|_{L^{2}(\gamma)\shortrightarrow{L^{2}(\Gamma)}}\leq
C_\calG(k)\lesssim \sqrt{k},\quad\text{for }k\geq k_0,
\end{equation*}
where
\begin{equation}\label{def:C_calG}
C_\calG(k):=\sqrt{\frac{L_\Gamma L_\gamma k}{2\pi\dist(\Gamma,\gamma)}}+{\frac{\sqrt{L_\Gamma L_\gamma}}{\pi\dist(\Gamma,\gamma)}},
\end{equation}
where $L_\Gamma$ and $L_\gamma$ denote the perimeters of $\resub{\Omega}$ and $\resub{\omega}$ respectively.
\end{lemma}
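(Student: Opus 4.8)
The plan is to estimate the operator norm of $\calG_\gtG$ by working side-by-side, bounding each $\calG_{\gtG_j}:L^2(\gamma)\to L^2(\Gamma_j)$ and then assembling. Fix $\bfx\in\Gamma_j$ and $\bfy\in\gamma\cap U_j$. The kernel of $\calG_{\gtG_j}$ is $-2\,\partial\Phi_k(\bfx,\bfy)/\partial\bfn_j(\bfx)$, and from $\Phi_k(\bfx,\bfy)=(\imag/4)H_0^{(1)}(k|\bfx-\bfy|)$ we get $\partial\Phi_k/\partial\bfn_j(\bfx)=-(\imag k/4)H_1^{(1)}(k|\bfx-\bfy|)\,\bfn_j(\bfx)\cdot(\bfx-\bfy)/|\bfx-\bfy|$, so the kernel is pointwise bounded by $\tfrac{k}{2}\,|H_1^{(1)}(k|\bfx-\bfy|)|$. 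Since $\bfx\in\Gamma$, $\bfy\in\gamma$ and these are disjoint, $|\bfx-\bfy|\ge\dist(\Gamma,\gamma)=:d>0$, so the argument $k|\bfx-\bfy|\ge k_0 d$ is bounded away from zero and we may use the large-argument bound $|H_1^{(1)}(z)|\le\sqrt{2/(\pi z)}\,(1+c/z)$ (valid for $z$ bounded below), giving a kernel bound of the form $\tfrac{k}{2}\sqrt{2/(\pi k|\bfx-\bfy|)}\,(1+c/(k|\bfx-\bfy|))\le \sqrt{k/(2\pi|\bfx-\bfy|)}+ (\text{const})/(|\bfx-\bfy|)\le \sqrt{k/(2\pi d)} + 1/(\pi d)$ after absorbing constants; the precise split producing the two terms of $C_\calG(k)$ in \eqref{def:C_calG} is exactly this separation of the leading and correction terms of $H_1^{(1)}$.

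Next I would apply Schur's test (or simply the crude Cauchy–Schwarz bound for integral operators): an integral operator with kernel $K$ on $\gamma\to\Gamma_j$ has $L^2\to L^2$ norm at most $\big(\sup_\bfx\!\int_{\gamma\cap U_j}|K(\bfx,\bfy)|\dd s(\bfy)\big)^{1/2}\big(\sup_\bfy\!\int_{\Gamma_j}|K(\bfx,\bfy)|\dd s(\bfx)\big)^{1/2}$, hence at most $\sqrt{L_{\gamma_j} L_j}\,\sup|K|$ where $L_{\gamma_j}$ is the length of $\gamma\cap U_j$ and $L_j$ that of $\Gamma_j$. Summing the contributions over $j$ and using $\sum_j L_j = L_\Gamma$ together with the fact that each portion of $\gamma$ contributes to at most the full perimeter $L_\gamma$, one arrives at $\|\calG_\gtG\|_{L^2(\gamma)\to L^2(\Gamma)}\le \sqrt{L_\Gamma L_\gamma}\,\sup|K|\le \sqrt{L_\Gamma L_\gamma}\big(\sqrt{k/(2\pi d)}+1/(\pi d)\big)=C_\calG(k)$. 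The final $\lesssim\sqrt k$ claim is then immediate: for $k\ge k_0$ the second term is bounded by a constant times the first (since $1/(\pi d)\le \sqrt{k/(2\pi d)}\cdot\sqrt{2/(\pi d k_0)}$), so $C_\calG(k)\le C\sqrt k$ with $C$ depending only on $L_\Gamma, L_\gamma, d, k_0$, i.e.\ only on the geometry (and $k_0$).

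The main obstacle is purely bookkeeping rather than conceptual: one must be careful that the domain of integration in $\calG_{\gtG_j}$ is $\gamma\cap U_j$, not all of $\gamma$, and that when assembling the side-by-side estimates the overlaps are handled so that the total contribution of $\gamma$ is controlled by $L_\gamma$ and not by $\numSides\cdot L_\gamma$ — this is what makes the constant genuinely geometric. A secondary technical point is pinning down the constant in the large-argument asymptotics of $H_1^{(1)}$ uniformly for $z\ge k_0 d$; this is standard (e.g.\ from the integral representation or the known bound $|H_1^{(1)}(z)|\le\sqrt{2/(\pi z)}$ for $z$ away from $0$, see \citet[\S10]{DLMF}), and the $1/z$ correction term is exactly what produces the second summand of $C_\calG(k)$. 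No compactness or mapping-property subtleties arise because $\Gamma$ and $\gamma$ are disjoint, so the kernel is smooth and bounded on $\Gamma\times\gamma$.
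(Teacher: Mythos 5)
Your proposal is correct and follows essentially the same route as the paper: both reduce the operator norm to $\sqrt{L_\Gamma L_\gamma}\,\sup_{\bfx\in\Gamma,\bfy\in\gamma}|{\rm kernel}|$ (the paper via Cauchy--Schwarz/Hilbert--Schmidt, you via Schur's test --- the two crude bounds coincide here) and then invoke the Hankel-function estimate $|H^{(1)}_1(z)|\le\sqrt{2/(\pi z)}+2/(\pi z)$ together with $|\bfx-\bfy|\ge\dist(\Gamma,\gamma)$. The only cosmetic difference is that the paper uses the all-$z$ bound from \citet[(1.23)]{ChGrLaLi:09}, which yields the two summands of $C_\calG(k)$ exactly rather than ``after absorbing constants.''
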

\begin{proof}
For $0\neq\varphi\in L^2(\gamma)$,
using the Cauchy--Schwarz inequality, we can write
\begin{align*}
\frac{\|\calG_\gtG\varphi\|_{L^{2}(\Gamma)}}{\|\varphi\|_{L^{2}(\gamma)}}
&=
\frac{1}{\|\varphi\|_{L^{2}(\gamma)}}
\left( \sum_{j=1}^{\numSides}\int_{\Gamma_j} \left| 2 \int_{\gamma\cap U_j} \pdiv{\Phi_k(\bfx,\bfy)}{\bfn_j(\bfx)}\varphi(\bfy)\dd{s(\bfy)}\right|^2\dd{s(\bfx)}\right)^{1/2}
\\
&\leq
\frac{2}{\|\varphi\|_{L^{2}(\gamma)}}\left( \int_\Gamma \left\|\pdiv{\Phi_k(\bfx,\cdot)}{\bfn(\bfx)}\right\|_{L^{2}(\gamma)}^2\|\varphi\|_{L^{2}(\gamma)}^2\dd{s(\bfx)}\right)^{1/2}\\
&=2\left(\int_\Gamma\int_\gamma\left|\pdiv{\Phi_k(\bfx,\bfy)}{\bfn(\bfx)}\right|^2\dd{s(\bfy)}\dd{s(\bfx)}\right)^{1/2}\\
&\leq2\left(\int_\Gamma\dd{s}\int_\gamma\dd{s}\right)^{1/2}\sup_{\bfx\in \Gamma,\bfy\in \gamma}\left|\pdiv{\Phi_k(\bfx,\bfy)}{\bfn(\bfx)}\right|.
\end{align*}
The result follows from ${H^{(1)}_0}'(z)=-H^{(1)}_1(z)$ and \citet[(1.23)]{ChGrLaLi:09}, which states that $|H^{(1)}_1(z)|$ $\leq\sqrt{2/(\pi z)}+{2/(\pi z)}$ for $z>0$.
\end{proof}

As intuition would suggest, Lemma \ref{le:G_bound} confirms that the norm of the interaction operator \eqref{def:fullG} decreases as the obstacles move further apart, i.e., as the interaction between them decreases.

\subsection{Estimates of the {$L^\infty$} norm of the Helmholtz solution in {$D$}}\label{ss:u_max}

A value that will feature in many of the estimates for this method is
\begin{equation}\label{u_M_def}
\MSM:=\|u\|_{L^\infty(D)}.
\end{equation}
{The dependence of  $\MSM$ on the wavenumber $k$ is of key importance, as $\MSM$ appears as a multiplicative constant in the $hp$ best approximation result derived in \S\ref{s:Galerkin_method}, alongside a term which decreases exponentially with $p$. To show exponential convergence of the method, we therefore require that $\MSM$ grows at most algebraically with $k$.}
To explore this dependence, we will make use of the current best available bounds on the Dirichlet-to-Neumann map (see, e.g., \citet[\S2.7]{ACTA}) for multiple obstacle configurations. Recently in \citet{ChSpGiSm:17} such bounds have been developed for $(R_0,R_1)$ configurations (of Definition \ref{def:R0R1}), enabling our analysis to cover a much broader range of configurations. To relate these to estimates for \eqref{u_M_def}, we require the following continuity bound for the single layer potential.

\begin{lemma}\label{lem:S_k_bd}
For a domain $D$ with bounded Lipschitz boundary $\partial D$, given $k_0>0$ the following bound on the single layer potential \eqref{def:Sk} holds
\begin{equation}\label{bd:SLgen}
\|S_k\|_{L^2(\partial D)\shortrightarrow L^\infty( D)}
\lesssim
k^{-1/2}\log^{1/2}(1+k\diam{(\partial D)}),\quad k\geq k_0.
\end{equation}

\end{lemma}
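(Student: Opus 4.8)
The plan is to bound the kernel $\Phi_k(\bfx,\bfy)=(\imag/4)H^{(1)}_0(k|\bfx-\bfy|)$ in the $L^2(\partial D)$-dual sense: for fixed $\bfx\in D$, write
\[
|S_k\varphi(\bfx)|\le \|\Phi_k(\bfx,\cdot)\|_{L^2(\partial D)}\,\|\varphi\|_{L^2(\partial D)}
\]
by Cauchy--Schwarz, so that $\|S_k\|_{L^2(\partial D)\to L^\infty(D)}\le \sup_{\bfx\in D}\|\Phi_k(\bfx,\cdot)\|_{L^2(\partial D)}$. Everything then reduces to estimating $\int_{\partial D}|H^{(1)}_0(k|\bfx-\bfy|)|^2\,\dd s(\bfy)$ uniformly in $\bfx$. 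The key analytic input is the behaviour of $H^{(1)}_0(z)$: it is $O(\log(1/z))$ as $z\to 0^+$ and $O(z^{-1/2})$ as $z\to\infty$, with $|H^{(1)}_0(z)|\lesssim 1+|\log z|$ uniformly for $z$ bounded away from infinity and $|H^{(1)}_0(z)|\lesssim z^{-1/2}$ for $z\gtrsim 1$ — both standard, available from \citet[\S10]{DLMF} or the bounds already cited in the excerpt (e.g. \citet{ChGrLaLi:09}).

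First I would reduce the surface integral to a one-dimensional integral over the distance variable. Since $\partial D$ is a bounded Lipschitz curve, for any fixed $\bfx$ the arclength measure of $\{\bfy\in\partial D: |\bfx-\bfy|\le r\}$ grows at most linearly in $r$ — more precisely it is $\lesssim r$ with a constant depending only on the Lipschitz character and length of $\partial D$ (a co-area / layer-cake argument). Hence
\[
\int_{\partial D}|H^{(1)}_0(k|\bfx-\bfy|)|^2\,\dd s(\bfy)\lesssim \int_0^{\diam(\partial D)}|H^{(1)}_0(kr)|^2\,\dd r,
\]
and after the substitution $z=kr$ this becomes $k^{-1}\int_0^{k\,\diam(\partial D)}|H^{(1)}_0(z)|^2\,\dd z$.

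Next I would split that integral at $z=1$. On $(0,1)$, using $|H^{(1)}_0(z)|^2\lesssim (1+|\log z|)^2$, the integral $\int_0^1(1+|\log z|)^2\,\dd z$ is a finite absolute constant, contributing $\lesssim k^{-1}$. On $(1,k\,\diam(\partial D))$, using $|H^{(1)}_0(z)|^2\lesssim z^{-1}$, we get $\int_1^{k\,\diam(\partial D)}z^{-1}\,\dd z=\log(k\,\diam(\partial D))$, contributing $\lesssim k^{-1}\log(1+k\,\diam(\partial D))$ (the $+1$ absorbing the low-frequency/short-diameter regime and making the bound uniform for $k\ge k_0$). Summing, $\int_{\partial D}|\Phi_k(\bfx,\cdot)|^2\lesssim k^{-1}\log(1+k\,\diam(\partial D))$ uniformly in $\bfx\in D$, and taking square roots gives exactly \eqref{bd:SLgen}.

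The main obstacle — really the only nontrivial point — is justifying the linear-growth bound on the arclength measure of metric balls on $\partial D$, i.e. that $\mathcal{H}^1(\{\bfy\in\partial D:|\bfx-\bfy|<r\})\lesssim r$ with a constant independent of $\bfx$; for a general Lipschitz boundary this requires a short covering argument using that $\partial D$ is locally a Lipschitz graph over finitely many coordinate patches. Everything else is elementary once the standard uniform Bessel-function bounds are invoked, and the logarithm in \eqref{bd:SLgen} arises solely and transparently from the $\int z^{-1}\,\dd z$ in the far-field regime.
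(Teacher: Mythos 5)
Your proposal is correct and follows essentially the same route as the paper's proof: Cauchy--Schwarz to reduce to $\sup_{\bfx}\|\Phi_k(\bfx,\cdot)\|_{L^2(\partial D)}$, reduction to a one-dimensional integral using the decomposition of $\partial D$ into finitely many Lipschitz graphs together with the monotonicity of $|H^{(1)}_0|$, and then the split at argument $1$ with the standard logarithmic and $z^{-1/2}$ Hankel bounds, the logarithm arising from $\int z^{-1}\,\mathrm{d}z$ exactly as in the paper. The only (immaterial) difference is that you package the reduction to one dimension as a layer-cake estimate on the arclength measure of metric balls, whereas the paper projects each graph piece onto its coordinate axis and uses $|\bfp-\bfy|\ge|x_j-p_{x,j}|$ directly; both rest on the same covering argument and Lipschitz constant.
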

 \begin{proof}
 	It is straightforward to show (see, e.g., \citet[Lemma~4.1]{HeLaMe:13_})  that 
 	\begin{equation}\label{eq:SkNorm}
 	\Norm{S_k}_{L^2(\partial D)\to L^\infty(D)}\leq \esssup{\bfp\in D}\Norm{\Phi_k(\bfp,\cdot)}_{L^2(\partial D)}.
 	\end{equation}
 	
 	We shall exploit the Lipschitz property of $\partial D$, by defining a finite set of Lipschitz graphs which describe its geometry, and bounding \resub{the right-hand side of }\eqref{eq:SkNorm} in terms of the coordinates describing these graphs. Let $\{W_j\}$, $j=1,\ldots,N$, be a finite open cover of $\partial D$ as in the definition of a Lipschitz domain (see, e.g., \citet[3.28]{Mc:00}).
 	Assume without loss of generality that each $W_j\cap \partial D$ is connected. Each $W_j\cap \partial D$ is part of the graph of a Lipschitz real function $\ell_j$ in rotated Cartesian coordinates, which we denote $(x_j,y_j)$. The boundary $\partial D$ can thus be decomposed into $N_D$ arcs $\alpha_j$ (with disjoint relative interiors) that are the graph of $\ell_j:[a_j,b_j]\to\R$, i.e.\ $\alpha_j=\{(x_j,y_j)\in\R^2:\;a_j\le x_j\le b_j,\;y_j=\ell_j(x_j)\}\subset W_j\cap\partial D$, and $\partial D=\bigcup_{j=1}^{N_D}\alpha_j$. Denote by $C_\ell$ a constant which bounds above the Lipschitz constant of every Lipschitz graph function $\ell_j$. Fix any $\bfp\in\R^2$. For each $j=1,\ldots,N_D$ denote by $(p_{x,j},p_{y,j})$ the coordinates of $\bfp$ in the $(x_j,y_j)$ coordinate system.
 	We have $\max\{|a_j-p_{x,j}|,|b_j-p_{x,j}|\}\le \max_{\bfq\in\partial D}|\bfp-\bfq|$.
 	
 	Now we have established the necessary notation, we decompose the integral in the $L^2(\partial D)$ norm on the right-hand side of \eqref{eq:SkNorm} into the regions contained within the open sets $W_j$, each with its own Lipschitz graph $\alpha_j$:
 	\begin{align*}
 	\Norm{\Phi_k(\bfp,\cdot)}_{L^2(\partial D)}^2
 	&=\sum_{j=1}^{N_D} \int_{\alpha_j} |\Phi_k(\bfp,\bfy)|^2\dd s(\bfy)\\
 	&=\frac1{16}\sum_{j=1}^{N_D} \int_{\alpha_j} \Big|H^{(1)}_0\big(k|\bfp-\bfy|\big)\Big|^2\dd s(\bfy)\\
 	&=\frac1{16}\sum_{j=1}^{N_D} \int_{a_j}^{b_j} 
 	\Big|H^{(1)}_0\Big(k\sqrt{\big(x_j-p_{x,j}\big)^2+\big(f(x_j)-p_{y,j}\big)^2}\Big)\Big|^2
 	\sqrt{1+|\ell_j'(x_j)|^2}\dd x_j.
 	\end{align*}
 	Now we may appeal to the monotonicity of $|H^{(1)}_0|$, and bound the variation of the mapping to the Lipschitz graph $\ell'$ by the constant $C_\ell$ to obtain
 	\begin{align*}
 	\Norm{\Phi_k(\bfp,\cdot)}_{L^2(\partial D)}^2&\le\frac{\sqrt{1+C_\ell^2}}{16}\sum_{j=1}^{N_D} \int_{a_j}^{b_j} 
 	\Big|H^{(1)}_0\big(k|x_j-p_{x,j}|\big)\Big|^2\dd x_j \qquad\qquad\\
 	&=\frac1k\frac{\sqrt{1+C_\ell^2}}{16}\sum_{j=1}^{N_D} \int_{k(a_j-p_{x,j})}^{k(b_j-p_{x,j})} \Big|H^{(1)}_0(|s|)\Big|^2\dd s,
 	\end{align*}
 	where we have changed integration variables to simplify the integrand in the second step. Since $b_j-a_j\leq R_D:=\diam(\partial D)$, we can bound further
 	\begin{align}
 	\hspace{-1cm}	\Norm{\Phi_k(\bfp,\cdot)}_{L^2(\partial D)}^2 \leq&\frac1k\frac{\sqrt{1+C_\ell^2}}{16}\sum_{j=1}^{N_D} \int_{k(a_j-p_{x,j})}^{k(a_j-p_{x,j}+R_D)} \Big|H^{(1)}_0(|s|)\Big|^2\dd s\nonumber\\
 	\leq&\frac1k\frac{\sqrt{1+C_\ell^2}}{16}\sum_{j=1}^{N_D} \bigg(
 	\int_{(k(a_j-p_{x,j}),k(a_j-p_{x,j}+R_D))\cap(-1,1)} \Big|H^{(1)}_0(|s|)\Big|^2\dd s\label{intSplit_1}\\
 	&+\int_{(k(a_j-p_{x,j}),k(a_j-p_{x,j}+R_D))\setminus(-1,1)} \Big|H^{(1)}_0(|s|)\Big|^2\dd s\bigg).\label{intSplit_2}
 		\end{align}
 	We have split the integrals in order to bound the Hankel function, using $|H^{(1)}_0(z)|\le \hat c(1+|\log |z||)$ if $0<|z|\le1$ with \eqref{intSplit_1}, and $|H^{(1)}_0(z)|\le \hat c |z|^{-1/2}$ if $|z|>1$, by e.g.\ \citet[p.~638]{HeLaMe:13_} (with value $\hat c\approx2.09$) with \eqref{intSplit_2}. The integral \eqref{intSplit_1} is therefore bounded above by
 	\begin{equation}\label{intSplit_i}
 	2\hat{c}^2\int_0^1 (1+|\log s|)^2 = 10\hat c^2,
 	\end{equation}
 	where we have used $\int^t(1-\log s)^2\dd s=t(\log^2 t-4\log t+5)+$constant in the final step. The second integral \eqref{intSplit_2} is maximised either when (i) $k(a_j-p_{x,j})=1$ or when (ii) $k(a_j-p_{x,j})=-kR_D/2$. In case (i), the integral is bounded above by
 	\[
 	\hat c^2\int_1^{1+kR_D}s^{-1}\dd s =\hat c^2 \log(1+kR_D)
 	\]
 	and in case (ii) it is bounded above by
 	\[
 	2\hat c^2\int_1^{kR_D/2}s^{-1}\dd s = 2\hat c^2\log(kR_D/2),
 	\]
 	so in either case, \eqref{intSplit_2} is bounded above by $2 \hat c^2\log(1+kR_D)$. Combining this with \eqref{intSplit_i} yields
 	\[
 	\Norm{\Phi_k(\bfp,\cdot)}_{L^2(\partial D)}^2 \le \frac1k {N_D} \hat c^2\frac{\sqrt{1+C_\ell^2}}{8} \Big(5+\log (1+kR_D)\Big),
 	\]
 	This gives the explicit form of the simplified estimate in our claim, proving the assertion.
 	
 \end{proof}

Using this result, we can say more about the $k$-dependence of $\MSM$, for a large class of multiple scattering configurations of interest.
{\begin{theorem}\label{Mu_bdMS}
Suppose that $u$ satisfies the BVP \eqref{Helmholtz}--\eqref{SRC}, with plane wave incidence \eqref{eq:PW}. Then given $k_0{>0}$ independent of $k$, the following bounds hold:
\begin{enumerate}[(i)]
\item If $\Upsilon=\resub{\Omega}\cup\resub{\omega}$ is a non-trapping polygon (in the sense of Definition \ref{def:nontrapping}),
\[
\MSM\lesssim k^{1/2}\log^{1/2}(1+k\diam({\partial D})),\quad\text{for } k\geq k_0.
\]
\item Otherwise, if $\Upsilon=\resub{\Omega}\cup\resub{\omega}$ is an $(R_0,R_1)$ domain (in the sense of Definition \ref{def:R0R1}),
\[
\MSM\lesssim k^{5/2}\log^{1/2}(1+k\diam({\partial D})),\quad\text{for } k\geq k_0,
\]
\end{enumerate}
where $\MSM$ is as in \eqref{u_M_def}.
\end{theorem}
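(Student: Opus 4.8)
The plan is to combine the Green's representation \eqref{Green} with Lemma \ref{lem:S_k_bd} and with wavenumber-explicit bounds on the exterior Dirichlet-to-Neumann (DtN) map of $D$. Taking $L^\infty(D)$ norms in $u=u^i-S_k\pdivl{u}{\bfn}$ gives
\[
\MSM\leq\|u^i\|_{L^\infty(D)}+\|S_k\|_{L^2(\partial D)\to L^\infty(D)}\,\Norm{\pdiv{u}{\bfn}}_{L^2(\partial D)}.
\]
Since $\|u^i\|_{L^\infty(D)}=1$ and Lemma \ref{lem:S_k_bd} gives $\|S_k\|_{L^2(\partial D)\to L^\infty(D)}\lesssim k^{-1/2}\log^{1/2}(1+k\diam(\partial D))$, it suffices to prove $\Norm{\pdivl{u}{\bfn}}_{L^2(\partial D)}\lesssim k$ in case (i) and $\lesssim k^3$ in case (ii); the factor $k^{-1/2}\log^{1/2}(1+k\diam(\partial D))$ then produces exactly the claimed powers, and the additive $1$ is absorbed for $k\geq k_0$. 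Note $\pdivl{u}{\bfn}\in L^2(\partial D)$ is already established above.

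To bound the Neumann trace I would write $\pdivl{u}{\bfn}=\pdivl{u^i}{\bfn}+\pdivl{u^s}{\bfn}$ on $\partial D$. The incident part is elementary: from \eqref{eq:PW}, $\big|\pdivl{u^i}{\bfn}(\bfx)\big|=k\,|\bfn(\bfx)\cdot\bfd|\leq k$, so $\Norm{\pdivl{u^i}{\bfn}}_{L^2(\partial D)}\leq k\,|\partial D|^{1/2}\lesssim k$. For the scattered part, \eqref{BC} gives $u^s=-u^i$ on $\partial D$, and $u^s$ is a radiating Helmholtz solution in $D$, so $\pdivl{u^s}{\bfn}$ is the image of $-u^i|_{\partial D}$ under the exterior DtN map. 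The trace data satisfies $\|u^i|_{\partial D}\|_{L^2(\partial D)}\lesssim1$ and $\|\nabla_{\partial D}u^i\|_{L^2(\partial D)}\lesssim k$, hence $\|u^i|_{\partial D}\|_{H^1_k(\partial D)}\lesssim k$, where $\|g\|_{H^1_k(\partial D)}^2:=\|\nabla_{\partial D}g\|_{L^2(\partial D)}^2+k^2\|g\|_{L^2(\partial D)}^2$.

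Now I invoke the DtN bounds. For case (i), $\Upsilon$ non-trapping (Definition \ref{def:nontrapping}), \citet{BaSpWu:16} gives $\|\mathrm{DtN}^+\|_{H^1_k(\partial D)\to L^2(\partial D)}\lesssim1$, so $\Norm{\pdivl{u^s}{\bfn}}_{L^2(\partial D)}\lesssim k$ and therefore $\Norm{\pdivl{u}{\bfn}}_{L^2(\partial D)}\lesssim k$. For case (ii), $\Upsilon$ an $(R_0,R_1)$ configuration (Definition \ref{def:R0R1}), the corresponding bound of \citet{ChSpGiSm:17} carries a polynomial-in-$k$ loss, $\|\mathrm{DtN}^+\|_{H^1_k(\partial D)\to L^2(\partial D)}\lesssim k^2$, giving $\Norm{\pdivl{u^s}{\bfn}}_{L^2(\partial D)}\lesssim k^3$ and hence $\Norm{\pdivl{u}{\bfn}}_{L^2(\partial D)}\lesssim k^3$. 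Substituting into the first display yields the two stated estimates.

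The main obstacle is the careful use of these cited estimates rather than the (elementary) remainder of the argument: one must verify that the possibly disconnected, corner-containing boundary $\partial D=\Gamma\cup\gamma$ falls within the hypotheses of \citet{BaSpWu:16} and \citet{ChSpGiSm:17}, and reconcile their norm conventions --- in particular the $k$-weighting of the $H^1(\partial D)$ and $L^2(\partial D)$ norms, and whether the bound is stated directly for $\mathrm{DtN}^+$ or obtained via an interior $H^1_k(D\cap B_R)$ estimate together with a trace inequality --- so that no spurious powers of $k$ are introduced. One should also check that all implied constants depend only on the geometry of $\Upsilon$ and on $k_0$, as required by the statement.
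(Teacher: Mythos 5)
Your argument is correct and is essentially the paper's own proof: both rest on the Green's representation \eqref{Green}, the bound on $\|S_k\|_{L^2(\partial D)\to L^\infty(D)}$ from Lemma \ref{lem:S_k_bd}, the identification $\pdivl{u^s}{\bfn}=-P_{\mathrm{DtN}}\tau_+u^i$ with the DtN bounds of \citet{BaSpWu:16} and \citet{ChSpGiSm:17}, and the elementary estimates $\|u^i\|_{L^\infty(D)}=1$, $\|u^i\|_{H^1_k(\partial D)}\lesssim k$. The only cosmetic difference is that you bound $\|\pdivl{u}{\bfn}\|_{L^2(\partial D)}$ as a whole before applying $S_k$, whereas the paper folds the incident-trace term into the factor $\bigl(1+\|P_{\mathrm{DtN}}\|_{H^1_k(\partial D)\to L^2(\partial D)}\bigr)\|u^i\|_{H^1_k(\partial D)}$; the resulting powers of $k$ are identical.
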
}
\begin{proof}
We write the BVP \eqref{Helmholtz}--\eqref{SRC} for the scattered field $u^s$, with Dirichlet data $u^s=-u^i $ on the boundary $\partial D $, in terms of the Dirichlet-to-Neumann (DtN) map $P_{\text{DtN}}$ (see, e.g., \citet[\S2.7]{ACTA}) as $\pdivl{u^s}{\bfn}=-P_{\text{DtN}}\tau_+u^i$, where $\tau_+$ denotes the exterior Dirichlet trace.
The representation \eqref{Green} gives
\[
u^s=-S_k\left(\pdiv{}{\bfn}-P_\text{DtN}\tau_+\right)u^i ,\quad\text{in }D.
\]
This, together with $|\pdivl{u^i}{\bfn}|\leq k|u^i|$ (which follows immediately from \eqref{eq:PW}), enables us to bound $u^s$ as
\begin{equation}\label{bd:usDtN}
\|u^s\|_{L^\infty(D)}\leq \|{S}_k\|_{L^2(\partial D )\shortrightarrow {L^\infty(D)}}\left(1+\|P_\text{DtN}\|_{H^1_k(\partial D )\shortrightarrow L^2(\partial D )}\right)\|u^i \|_{H^1_k(\partial D )},
\end{equation}
where $\|\cdot\|_{H^1_k(\partial D)}$ denotes the the $k$-weighted norm of the Sobolev space $H^1(\partial D)$ 
\begin{equation}\label{def:HkNorm}
\|\varphi\|_{H^1_k(\partial D)}:=
\left(\int_{\partial D} k^2|\varphi|^2+|\nabla_S\varphi|^2\dd{V} \right)^{1/2}
\end{equation}
and $\nabla_S$ denotes the surface gradient operator on $\partial D$ (defined in \eqref{eq:SGkernel}).
{By the triangle inequality we have $\MSM\le \|u^i\|_{L^\infty(D)}+\|u^s\|_{L^\infty(D)}$, and from Lemma \ref{lem:S_k_bd} we can bound $\|{S}_k\|_{L^2(\partial D )\shortrightarrow L^\infty( D )}$. Hence we may write, for $k\geq k_0$,
\begin{equation}\label{us_bd}
\MSM\lesssim \|u^i\|_{L^\infty(D)}+k^{-1/2}\log^{1/2}(1+k\diam(\partial D))\|P_\text{DtN}\|_{H^1_k(\partial D )\shortrightarrow L^2(\partial D )}\; \|u^i \|_{H^1_k(\partial D )}.
\end{equation}
For the DtN maps, we may use \citet[Theorem~1.4]{BaSpWu:16} for the non-trapping polygon case \emph{(i)} $\|P_\text{DtN}\|_{H^1_k(\partial D )\shortrightarrow L^2(\partial D )}\lesssim 1$, whilst the $(R_0,R_1)$ obstacle case \emph{(ii)} $\|P_\text{DtN}\|_{H^1_k(\partial D )\shortrightarrow L^2(\partial D )}\lesssim k^2$ follows by \citet[Theorem~1.8]{ChSpGiSm:17}. It remains to bound the incident field $u^i$  at the boundary and in the domain. For plane wave incidence, it follows by the definitions  \eqref{def:HkNorm} and \eqref{eq:PW} that $\|u^i\|_{H^1_k(\partial D )}\leq2\sqrt{|\Gamma|}k$ and $\|u^i\|_{L^\infty(D)}=1$. 
The result follows by combining these bounds on $u^i$ with the components of \eqref{us_bd}.
}\end{proof}

Theorem \ref{Mu_bdMS} is a generalisation of \citet[Theorem~4.3]{HeLaMe:13_}, which bounds $\MSM$ for star-shaped polygons. Although more general, Theorem \ref{Mu_bdMS} differs from \citet[Theorem~4.3]{HeLaMe:13_} in that it is not fully explicit in terms of the geometric parameters of $\Upsilon$.
 We do not expect such a bound to hold for the most general configurations and incident fields, since it was shown in \citet[Theorem~2.8]{BeChGrLaLi:11} that there exist multiple obstacle configurations for which {$\|\calA_{k,\eta}^{-1}\|_{L^2(\partial D)\shortrightarrow L^2(\partial D)}$} is bounded below by a term which grows exponentially with $k$, in which case $\MSM$ would grow similarly. 
In particular though, Theorem \ref{Mu_bdMS}(i) is immediately applicable to the case of polygons which are non-convex, non-star-shaped and non-trapping, considered in \citet{ChHeLaTw:15} (see Definition~3.1 therein), for which the stronger result $\MSM=\mathcal O(1)$ for $k\to\infty$ was conjectured, in the (then) absence of any available algebraic bounds.
The bound of Theorem \ref{Mu_bdMS} is sufficient to guarantee algebraic growth of $u_{\max}(k)$ in $k$, and therefore exponential convergence of{ HNA-BEM for} such polygons.  

The following assumption generalises Theorem \ref{Mu_bdMS} to all configurations of interest.
\begin{assumption}\label{as:u_max_alg}
For the solution $u$ of the BVP \eqref{Helmholtz}--\eqref{SRC}, we assume that there exist $\beta\ge0${, $k_0>0$} and ${C_u}>0$, independent of $k$, such that
\[
\MSM\leq C_u k^\beta \qquad \text{for }k\geq k_0,
\]
that is $\MSM$ of \eqref{u_M_def} has at most algebraic dependence on the wavenumber $k$.
\end{assumption}
Clearly{ Assumption~\ref{as:u_max_alg} holds for configurations satisfying the conditions of
Theorem~\ref{Mu_bdMS} (see Remark \ref{re:params} for more details).}

\subsection{Analyticity and bounds for the envelope functions {$v^\pm_j$}} \label{sec:anal}
Additional notation is required for the estimates that follow. 
Denote by $\Omega_j$ the exterior angle at the corner $\mathbf{P}_j$ {of $\resub{\Omega}$} (see figure \ref{fig:extraParams} for an illustrative example). 
{Since $\resub{\Omega}$ is a} convex polygon, $\Omega_j\in(\pi,2\pi)$ for all $j=1,\ldots,\numSides$. Let $c_*>0$ be a constant such that $kL_j\geq c_*$ for all $j=1,\ldots,\numSides$ (e.g. $c_*=\min_{j=1,\ldots,\numSides}\{kL_j\}$).

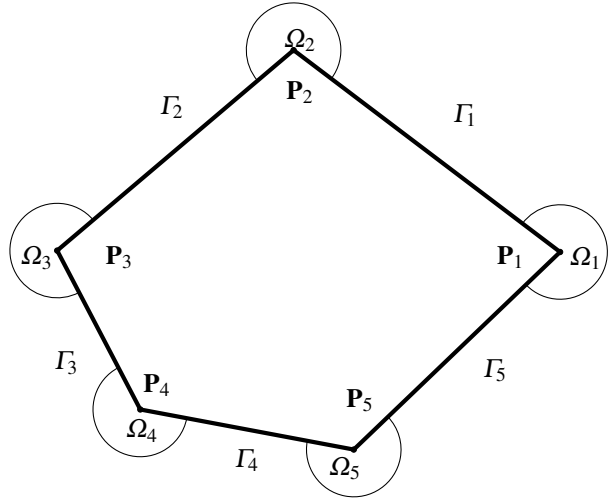
\begin{figure}[htb]
\centering
\definecolor{qqwuqq}{rgb}{0,0,0}
\definecolor{wqwqwq}{rgb}{0,0,0}
\begin{tikzpicture}[line cap=round,line join=round,>=triangle 45,x=1.0cm,y=1.0cm]
\clip(1.5,0.5) rectangle (10.,7.6);
\fill[line width=1.6pt,color=wqwqwq,fill=wqwqwq,fill opacity=0.] (2.4704761904761905,4.091428571428571) -- (5.6133333333333315,6.758095238095238) -- (9.156190476190478,4.072380952380952) -- (6.413333333333333,1.4438095238095239) -- (3.5752380952380944,1.9771428571428575) -- cycle;
\draw [shift={(5.6133333333333315,6.758095238095238)},color=qqwuqq,fill=qqwuqq,fill opacity=0.] (0,0) -- (-37.16447489351142:0.6285714285714284) arc (-37.16447489351142:220.3141001604973:0.6285714285714284) -- cycle;
\draw [shift={(2.4704761904761905,4.091428571428571)},color=qqwuqq,fill=qqwuqq,fill opacity=0.] (0,0) -- (40.31410016049732:0.6285714285714284) arc (40.31410016049732:297.58808136574584:0.6285714285714284) -- cycle;
\draw [shift={(3.5752380952380944,1.9771428571428575)},color=qqwuqq,fill=qqwuqq,fill opacity=0.] (0,0) -- (117.58808136574582:0.6285714285714284) arc (117.58808136574582:349.35712925529646:0.6285714285714284) -- cycle;
\draw [shift={(6.413333333333333,1.4438095238095239)},color=qqwuqq,fill=qqwuqq,fill opacity=0.] (0,0) -- (-190.64287074470357:0.6285714285714284) arc (-190.64287074470357:43.78112476486871:0.6285714285714284) -- cycle;
\draw [shift={(9.156190476190478,4.072380952380952)},color=qqwuqq,fill=qqwuqq,fill opacity=0.] (0,0) -- (-136.21887523513132:0.6285714285714284) arc (-136.21887523513132:142.8355251064886:0.6285714285714284) -- cycle;
\draw [line width=1.6pt,color=wqwqwq] (2.4704761904761905,4.091428571428571)-- (5.6133333333333315,6.758095238095238);
\draw [line width=1.6pt,color=wqwqwq] (5.6133333333333315,6.758095238095238)-- (9.156190476190478,4.072380952380952);
\draw [line width=1.6pt,color=wqwqwq] (9.156190476190478,4.072380952380952)-- (6.413333333333333,1.4438095238095239);
\draw [line width=1.6pt,color=wqwqwq] (6.413333333333333,1.4438095238095239)-- (3.5752380952380944,1.9771428571428575);
\draw [line width=1.6pt,color=wqwqwq] (3.5752380952380944,1.9771428571428575)-- (2.4704761904761905,4.091428571428571);
\begin{scriptsize}
\draw [fill=black] (2.4704761904761905,4.091428571428571) circle (1.0pt);
\draw [fill=black] (5.6133333333333315,6.758095238095238) circle (1.0pt);
\draw [fill=black] (9.156190476190478,4.072380952380952) circle (1.0pt);
\draw [fill=black] (6.413333333333333,1.4438095238095239) circle (1.0pt);
\draw [fill=black] (3.5752380952380944,1.9771428571428575) circle (1.0pt);
\end{scriptsize}

\draw (7.9,5.9) node{$\Gamma_1$};
\draw (4,6) node{$\Gamma_2$};
\draw (2.6,2.6) node{$\Gamma_3$};
\draw (5,1.3) node{$\Gamma_4$};
\draw (8.3,2.5) node{$\Gamma_5$};

\draw (8.5,4) node{$\bfP_1$}; \draw (9.5,4) node{$\Omega_1$};
\draw (5.7,6.2) node{$\bfP_2$}; \draw (5.7,6.9) node{$\Omega_2$};
\draw (3.3,4) node{$\bfP_3$}; \draw (2.2,4) node{$\Omega_3$};
\draw (3.8,2.3) node{$\bfP_4$}; \draw (3.6,1.7) node{$\Omega_4$};
\draw (6.5,2.1) node{$\bfP_5$}; \draw (6.3,1.2) node{$\Omega_5$};
\end{tikzpicture}
\caption{A convex polygon with the parameters introduced in \S\ref{sec:anal}.}
\label{fig:extraParams}
\end{figure}

We now aim to show, as in \citet{HeLaMe:13_} where only one (convex polygonal) scatterer $\resub{\Omega}$ is present, that the functions $v_j^\pm$ are complex-analytic, and moreover that they can be approximated much more efficiently than $\partial u/\partial \bfn|_\Gamma$. We update this to the multiple scattering configuration by adapting the intermediate results of \citet[\S3]{HeLaMe:13_}. We first consider the solution behaviour near the corners.
\begin{lemma}[Solution behaviour near the corners]\label{lem:near_corners}
Suppose that $u$ satisfies the BVP \eqref{Helmholtz}--\eqref{SRC} and $\bfx\in D$ satisfies $r:=|\bfx-\mathbf{P}_j|\in (0,1/k]$,  
and $r<\dist(\mathbf{P}_j,\gamma)$.
Then there exists a constant $C>0$, depending only on $\partial D$ and $c_*$, such that (with $\MSM$ as in \eqref{u_M_def}),
\[
|u(\bfx)|\leq C(kr)^{\pi/\Omega_j}\MSM.
\]
\end{lemma}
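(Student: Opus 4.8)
The plan is to reduce the estimate near the corner $\mathbf{P}_j$ to a classical separation-of-variables argument for the Helmholtz equation in a wedge, combined with the global bound $\MSM$ on $u$ from \S\ref{ss:u_max}. Since $r<\dist(\mathbf{P}_j,\gamma)$, the ball $B_r(\mathbf{P}_j)$ meets only $\Gamma$ among the scatterers, so locally the geometry is exactly that of the single-polygon problem: near $\mathbf{P}_j$ the domain $D$ coincides with a sector (wedge) of opening angle $\Omega_j\in(\pi,2\pi)$, with the sound-soft condition $u=0$ on the two straight edges $\Gamma_{j-1},\Gamma_j$ emanating from $\mathbf{P}_j$. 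This is precisely the setting of the corresponding lemma in \citet{HeLaMe:13_}, so the proof should follow theirs essentially verbatim, with $\MSM$ (the multiple-scattering $L^\infty$ bound) playing the role of the single-scattering one.

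Concretely, I would work in polar coordinates $(r,\theta)$ centred at $\mathbf{P}_j$, with $\theta\in(0,\Omega_j)$ the angular variable so that the two edges correspond to $\theta=0$ and $\theta=\Omega_j$. On the circle $r=1/k$ (which lies in $\overline D$ and away from $\gamma$ by hypothesis, possibly after shrinking a geometric constant), the trace of $u$ is controlled by $\MSM$. Expand $u$ on the wedge in the Fourier--Bessel series compatible with the homogeneous Dirichlet data on the edges: $u(r,\theta)=\sum_{m\ge 1} a_m J_{m\pi/\Omega_j}(kr)\sin(m\pi\theta/\Omega_j)$, the Bessel functions $J_\nu$ being the only solutions regular at the origin. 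Using orthogonality of $\{\sin(m\pi\theta/\Omega_j)\}$ on $(0,\Omega_j)$ and the bound $|u|\le \MSM$ on $r=1/k$, one obtains $|a_m J_{m\pi/\Omega_j}(1)|\lesssim \MSM$ for each $m$; then for $kr\le 1$ the monotonicity/growth estimate $|J_\nu(z)|\le (z/2)^\nu/\Gamma(\nu+1)\le (z)^\nu$ for $0<z\le 1$ (together with $J_\nu(1)$ bounded below away from zero for $\nu\ge \pi/\Omega_j$) gives $|a_m J_{m\pi/\Omega_j}(kr)|\lesssim (kr)^{m\pi/\Omega_j}\MSM$. Summing the geometric-type series in $m$, the $m=1$ term dominates for $kr\le 1$, yielding $|u(\bfx)|\le C(kr)^{\pi/\Omega_j}\MSM$ with $C$ depending only on $\Omega_j$ (hence on $\partial D$) and on $c_*$ through the requirement $kL_j\ge c_*$, which guarantees that the straight edges extend far enough that the wedge description is valid on all of $B_{1/k}(\mathbf{P}_j)$.

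The main technical point — and the place where care is needed rather than genuine difficulty — is justifying the Fourier--Bessel expansion and its termwise bounds rigorously: one needs interior elliptic regularity of $u$ away from the corner to make sense of the trace on $r=1/k$, a density/convergence argument for the series in the wedge (e.g.\ via the maximum principle comparing $u$ with the partial sums, or via an $L^2$-based argument on the circle followed by elliptic estimates), and the uniform lower bound on $J_\nu(1)$ over the relevant range of orders $\nu=m\pi/\Omega_j$, $m\ge1$, $\Omega_j<2\pi$, so that $\nu\ge\pi/\Omega_j>1/2$. All of these are standard; since \citet{HeLaMe:13_} already carried them out for the single-scatterer case and nothing in the local analysis changes when additional scatterers are present at positive distance, I would simply invoke their argument, replacing their global $L^\infty$ bound with $\MSM$ from \eqref{u_M_def} and noting that the hypothesis $r<\dist(\mathbf{P}_j,\gamma)$ is exactly what makes the reduction to the single-scatterer wedge legitimate.
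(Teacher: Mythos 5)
Your overall strategy is exactly the paper's: the hypothesis $r<\dist(\mathbf{P}_j,\gamma)$ localizes the problem to a sector in which the geometry is that of the single convex polygon, and the paper's entire proof is then a one-line appeal to \citet[Lemma 3.5]{HeLaMe:13_}, with the radius of the local sector redefined as $R_j:=\min\{L_{j-1},L_j,\pi/(2k),\dist(\mathbf{P}_j,\gamma)\}$ so that the sector stays inside $D$. Your closing paragraph (``invoke their argument, replacing the global $L^\infty$ bound with $\MSM$'') is precisely that proof. Where you diverge is in the self-contained wedge argument you substitute for the cited lemma: the argument in \citet{HeLaMe:13_} (following Chandler-Wilde and Langdon) does not separate variables for the Helmholtz operator; it splits $u$ in the sector into a harmonic function with the same boundary data (bounded by $C\MSM(r/R_j)^{\pi/\Omega_j}$ via conformal mapping of the sector to a half-disc and the Poisson kernel) plus a particular solution of $\Delta w=-k^2u$ with zero boundary data (controlled by the sector Green's function, with the constraint $kR_j\le\pi/2$ keeping that contribution of the same size). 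Your Fourier--Bessel route is a legitimate alternative, but it is a genuinely different technique, not a verbatim transcription.

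As written, your version of the wedge argument has two concrete errors. First, the claim that $J_\nu(1)$ is ``bounded below away from zero'' uniformly over the orders $\nu=m\pi/\Omega_j$, $m\ge1$, is false: $J_\nu(1)$ tends to zero super-exponentially as $\nu\to\infty$ (it behaves like $(1/2)^\nu$ divided by a factorial-type factor), so you cannot bound $|a_m|\lesssim\MSM/J_{m\pi/\Omega_j}(1)$ by $C\MSM$. The fix is to never isolate $a_m$: bound the ratio $J_\nu(kr)/J_\nu(kR_j)\le C\,(r/R_j)^\nu$ directly, using the two-sided power-series estimates for $J_\nu(z)$ with $0<z\le\pi/2$ and $\nu\ge1/2$, in which the offending factorial factors cancel. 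Second, the assertion that ``the $m=1$ term dominates for $kr\le1$'' fails as $kr$ approaches the radius of the trace circle: the resulting series $\sum_m(kr)^{m\pi/\Omega_j}$ is not summable at $kr=1$. You need to take the trace on $r=R_j$, restrict the expansion argument to, say, $r\le R_j/2$ so the series is genuinely geometric, and dispose of the remaining range $R_j/2<r\le1/k$ by the trivial bound $|u|\le\MSM\le C(kr)^{\pi/\Omega_j}\MSM$, which is available there because $kR_j$ is bounded below (via $c_*$). The same trivial bound also repairs your claim that $kL_j\ge c_*$ makes the wedge description valid on all of $B_{1/k}(\mathbf{P}_j)$, which is not true when $c_*<1$. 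All of these are repairable, but they are genuine gaps in the argument as stated.
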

\begin{proof}
Follows identical arguments to \citet[Lemma 3.5]{HeLaMe:13_}, with the slight modification to the definition $R_j:=\min\{L_{j-1},L_j,\pi/(2k),\dist(\mathbf{P}_j,\gamma)\}$, which ensures only areas close to the corner $\mathbf{P}_j$ inside $D$ are considered.
\end{proof}

Now we may bound the singular behaviour of the diffracted envelopes $v_j^\pm$, which will enable us to choose a suitable approximation space for the numerical method.
\begin{theorem}\label{th:big_bounds}
Suppose that $u$ is a solution of the BVP \eqref{Helmholtz}--\eqref{SRC}, and that $c_r\in(0,1]$ is chosen such that $\dist(\{\mathbf{P}_j:{j=1,\ldots,\numSides}\},\gamma)>c_r/k$. Then the diffracted wave envelope components $v_j^\pm$ for $ j=1,\ldots, \numSides$, of the boundary representation \eqref{dudn_rep}, are analytic in the right {complex} half-plane $\re[s]>0$, where they satisfy the bounds
\begin{equation*}
|v_j^\pm(s)|\leq 
\begin{cases}
C_j^\pm \MSM \big(k|ks|^{-\delta^\pm_j}+k(k|s|+c_r)^{-1}\big),& 0 <|s|\leq1/k,\\
C_j^\pm \MSM k|ks|^{-1/2},& |s|>1/k,
\end{cases}
\end{equation*}
where $\delta^+_j,\delta^-_j \in (0,1/2)$ are given by $\delta^+_j:=1-\pi/\Omega_j$ and $\delta^-_j:=1-\pi/\Omega_{j+1}$. The constant $C^+_j$ depends only on $c_*$, $c_r$ and $\Omega_j$, whilst the constant $C^-_j$ depends only on $c_*$, $c_r$ and $\Omega_{j+1}$.
\end{theorem}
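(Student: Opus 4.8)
The plan is to adapt the proof of the analogous single-scattering result \citep[\S3]{HeLaMe:13_} essentially verbatim, the point being that the new interaction term in the ansatz \eqref{dudn_rep} does not enter the definitions \eqref{vp}--\eqref{vm} of the envelopes at all — $v_j^\pm$ are still given by an explicit integral of $\mu(k(s+t))\e^{\imag kt}u(\bfy_j(\cdot))$ over a half-line with the portion lying inside $\resub{\omega}$ removed. So the only quantities that need bounding are the kernel $\mu$ and the Dirichlet trace of $u$ along the extended side $\Gamma_j^\infty$, both of which are already controlled: $\mu$ by the standard Hankel asymptotics (giving $|\mu(z)|\lesssim z^{-3/2}$ for $z\gtrsim1$ and $|\mu(z)|\lesssim z^{-2}$ for small $z$, cf.\ \citet[(3.6) and Lemma 3.2]{HeLaMe:13_} or the bound $|H^{(1)}_1(z)|\le\sqrt{2/(\pi z)}+2/(\pi z)$ already invoked in Lemma \ref{le:G_bound}), and $|u|$ along $\Gamma_j^\infty$ by two regimes: near the corner $\mathbf{P}_j$ by Lemma \ref{lem:near_corners}, which gives $|u(\bfy_j(\tL_{j-1}-t))|\lesssim (kt)^{\pi/\Omega_j}\MSM$ for $0<t\le 1/k$ provided $t<\dist(\mathbf{P}_j,\gamma)$, and away from the corner simply by $|u|\le\MSM$.

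\textbf{Analyticity.} First I would establish that $v_j^\pm$ extends to an analytic function of $s$ in $\re[s]>0$. The integrand in \eqref{vp} is, for each fixed $t>0$, an analytic function of $s$ (since $z\mapsto\mu(z)=\e^{-\imag z}H^{(1)}_1(z)/z$ is analytic away from the branch point at $z=0$, and $s+t$ stays away from $0$ and from the negative reals when $\re[s]>0$, $t>0$). One then checks, using the decay $|\mu(z)|\lesssim |z|^{-3/2}$ and $|u|\le\MSM$, that the $t$-integral converges locally uniformly in $s$ on $\{\re[s]>0\}$, and applies Morera's theorem / differentiation under the integral sign exactly as in \citet[proof of Theorem 3.2]{HeLaMe:13_}. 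The excision of the measure-zero-complement set $Z_j^\pm$ (an open subset of $(0,\infty)$) does not disturb this, since restricting the domain of a convergent parametrised integral of analytic integrands preserves analyticity.

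\textbf{The bounds.} I would treat $v_j^+$; $v_j^-$ is symmetric with $\Omega_j$ replaced by $\Omega_{j+1}$. Write $|v_j^+(s)|\le \tfrac{k^2}{2}\int_0^\infty |\mu(k(s+t))|\,|u(\bfy_j(\tL_{j-1}-t))|\dd t$ (dropping the restriction $\setminus Z_j^+$ only enlarges the bound, since we are integrating a nonnegative function; here the hypothesis $\dist(\{\mathbf{P}_j\},\gamma)>c_r/k$ guarantees the corner estimate of Lemma \ref{lem:near_corners} is valid for the relevant range of $t$). Split the $t$-integral at $t=1/k$ and, within $t<1/k$, apply Lemma \ref{lem:near_corners} while for $t\ge1/k$ use $|u|\le\MSM$; simultaneously split according to whether $k(s+t)\lessgtr 1$ to pick the right $\mu$-bound. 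The resulting elementary integrals of powers of $(s+t)$ against powers of $t$ are exactly those evaluated in \citet[Theorem 3.2, Lemma 3.3]{HeLaMe:13_}: the piece where $t$ ranges over $(0,1/k)$ and $k(s+t)$ is small produces the term $\MSM k|ks|^{-\delta_j^+}$ with $\delta_j^+=1-\pi/\Omega_j$ (the Beta-function integral $\int_0^{1}(|s|+t)^{-2+\text{something}} t^{\pi/\Omega_j}\dd t$ converging because $\pi/\Omega_j>1/2>0$ and the singularity in $s$ being of order $|s|^{-(1-\pi/\Omega_j)}$); keeping the $c_r$ from the lower bound on $\dist(\mathbf{P}_j,\gamma)$ in the corner-scale integral yields the additive $k(k|s|+c_r)^{-1}$; and the tail $t>1/k$ with the $|z|^{-3/2}$ decay of $\mu$ gives the far-field rate $\MSM k|ks|^{-1/2}$, which also dominates for $|s|>1/k$. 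The only genuine bookkeeping is tracking that the constants depend solely on $c_*$ (which controls $kL_j$ from below, needed where the integration domain is effectively truncated at the far corner), $c_r$, and $\Omega_j$.

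\textbf{Main obstacle.} I do not expect a conceptual obstacle — the new multiple-scattering structure is invisible to $v_j^\pm$ by design, as stressed in Remark \ref{rem:vpm_not_the_same} (the configuration-dependence of $v_j^\pm$ is entirely hidden inside the factor $u$, which is uniformly bounded by $\MSM$, and inside the excised set $Z_j^\pm$, which only helps). The most delicate point is ensuring the corner asymptotic of Lemma \ref{lem:near_corners} may legitimately be used along the \emph{extended} side $\Gamma_j^\infty$ and not merely along $\Gamma_j$: one must verify that points $\bfy_j(\tL_{j-1}-t)$ for small $t>0$ lie in $D$ and within distance $1/k$ and within $\dist(\mathbf{P}_j,\gamma)$ of $\mathbf{P}_j$, which is precisely what the hypothesis $c_r\in(0,1]$ with $\dist(\{\mathbf{P}_j\},\gamma)>c_r/k$ secures (together with the fact that, $\resub{\Omega}$ being convex, a small neighbourhood of $\mathbf{P}_j$ on $\Gamma_j^\infty\setminus\resub{\Omega}$ does lie in $D$, possibly after excising the part in $\resub{\omega}$, which is where the $\setminus Z_j^+$ matters for consistency but not for the inequality). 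Beyond that, everything reduces to the power-integral estimates already carried out in \citet{HeLaMe:13_}.
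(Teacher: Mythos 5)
Your overall plan is the right one and matches the paper's proof in outline: analyticity is inherited from the analyticity of $\mu$ in the right half-plane as in \citet[Lemma~3.4]{HeLaMe:13_}, the $|s|>1/k$ bound carries over verbatim from \citet[Theorem~3.2]{HeLaMe:13_}, and for $|s|\le 1/k$ one combines the corner estimate of Lemma \ref{lem:near_corners} near $\mathbf{P}_j$ with the uniform bound $|u|\le\MSM$ and the decay of $\mu$ elsewhere. But there is one concrete misstep in the execution, and it sits exactly at the point where the multiple-scattering geometry genuinely enters.

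You split the $t$-integral at $t=1/k$ and claim that the hypothesis $\dist(\{\mathbf{P}_j\},\gamma)>c_r/k$ ``guarantees the corner estimate of Lemma \ref{lem:near_corners} is valid for the relevant range of $t$'', i.e.\ on all of $(0,1/k)$. It does not: Lemma \ref{lem:near_corners} requires $r<\dist(\mathbf{P}_j,\gamma)$, and the hypothesis only gives $\dist(\mathbf{P}_j,\gamma)>c_r/k$, which may be far smaller than $1/k$ when the small scatterers sit close to the corner --- the regime $c_r\ll1$ is precisely the one the theorem is designed to quantify. For $t\in(c_r/k,1/k)$ the point $\bfy_j(\tL_{j-1}-t)$ may be arbitrarily close to $\gamma$ (or inside $\resub{\omega}$), and no $(kt)^{\pi/\Omega_j}$ decay of $u$ is available there. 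The paper's proof therefore splits at $t=c_r/k$ instead: on $(0,c_r/k)$ Lemma \ref{lem:near_corners} applies and, since $c_r\le1$, the integral is dominated by the corresponding single-scattering integral over $(0,1/k)$, giving the term $\MSM k|ks|^{-\delta_j^\pm}$; on $(c_r/k,\infty)\setminus Z_j^+$ one uses only $|u|\le\MSM$ together with $|\mu(z)|\le C|z|^{-3/2}\big(|z|^{-1/2}+(\pi/2)^{1/2}\big)$, and the resulting elementary integral produces $C\MSM k\big((k|s|+c_r)^{-1}+(k|s|+c_r)^{-1/2}\big)$, the second term being absorbed into the first because $k|s|+c_r<2$. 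This is the actual origin of the additive term $k(k|s|+c_r)^{-1}$ in the statement; your attribution of it to ``keeping the $c_r$ \ldots\ in the corner-scale integral'' misidentifies the mechanism. Once the split point is corrected, the rest of your argument (analyticity, the $|s|>1/k$ case, the symmetry giving $v_j^-$ with $\Omega_{j+1}$, and the tracking of constants through $c_*$, $c_r$ and $\Omega_j$) goes through as you describe.
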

\begin{proof}
The analyticity of the functions $v^\pm_j(s)$ in $\re[s]>0$ follows from their definition \eqref{vp}--\eqref{vm} and the analyticity of $\mu(s)$ in the same set, which is shown in \citet[Lemma~3.4]{HeLaMe:13_}. 
The estimate of $|v_j^\pm(s)|$ for $|s|>1/k$ follows as in the proof of \citet[Theorem~3.2]{HeLaMe:13_}. Here we show for $v^+_j$, the proof for $v^-_j$ follows similar arguments.
For $|s|\le1/k$, the definition \eqref{vp}--\eqref{vm} of $v_j^+$ gives
\begin{align*}
|v_j^+(s)|&\leq  \frac{k^2}2 \int_{(0,c_r/k)} \big|\mu\big(k(s+t)\big)\big| 
\big|u\big(\bfy_j(\tL_{j-1}-t)\big)\big|\dd t\\
&\qquad + \frac{k^2}2 \int_{(c_r/k,\infty)\setminus Z_j^+} \big|\mu\big(k(s+t)\big)\big| 
\big|u\big(\bfy_j(\tL_{j-1}-t)\big)\big|\dd t.
\end{align*}
Since $c_r\le1$ and thanks to Lemma~\ref{lem:near_corners}, the first integral is bounded as in the proof of \citet[Theorem~3.2]{HeLaMe:13_}, leading to the term $\MSM k|ks|^{-\delta^\pm_j}$ in the assertion.
Using the bound on $\mu$ from \citet[Lemma~3.4]{HeLaMe:13_}, 
we control the second integral as 
\begin{align*}
\frac{k^2}2 \int_{(c_r/k,\infty)\setminus Z_j^+} &\big|\mu\big(k(s+t)\big)\big| 
\big|u\big(\bfy_j(\tL_{j-1}-t)\big)\big|\dd t\\
&\le  C \MSM k^2 \int_{c_r/k}^\infty 
\big|k(s+t)\big|^{-3/2}\Big(\big|k(s+t)\big|^{-1/2}+(\pi/2)^{1/2}\Big)\dd t\\
&\le C \MSM k^2 \Big(k^{-2}(|s|+c_r/k)^{-1}+k^{-3/2}(|s|+c_r/k)^{-1/2}\Big)\\
&= C \MSM  k\Big((k|s|+c_r)^{-1}+(k|s|+c_r)^{-1/2}\Big).
\end{align*}
The bound in the assertion follows by noting that $k|s|+c_r<2$.
\end{proof}

The constant $c_r$ is small when the scatterers are close together, relative to the  wavelength of the problem.
Thus the terms containing $c_r$ in the bound of Theorem~\ref{th:big_bounds} control the effect of the separation between $\resub{\Omega}$ and $\resub{\omega}$ on the singular behaviour of $v^\pm_j$.
However, the method we present is designed for high-frequency problems, and to maintain
$c_r=O(1)$ as $k$ increases, the separation of the scatterers is allowed to decrease inversely proportional to $k$. 
Hence, for the configurations that we consider of practical interest in the high-frequency regime, the condition \eqref{th:c_r_is_0} in the following corollary will hold.

\begin{corollary}\label{th:c_r_is_0}
Suppose that the conditions of Lemma \ref{lem:near_corners} hold, with the additional constraint that the \emph{separation condition}
\begin{equation}\label{as:c_r_is_0}
\dist(\Gamma,\gamma)\ge1/k,
\end{equation}
is satisfied. 
It then follows that the first bound of Theorem \ref{th:big_bounds} can be simplified to
\[
|v^\pm_j(s)|\le 
C_j^\pm \MSM k|ks|^{-\delta_j^\pm},\quad \text{for }0<|s|\leq1/k, \quad j=1,\ldots,\numSides.
\]
\end{corollary}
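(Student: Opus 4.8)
The plan is to start from the first case of the estimate in Theorem \ref{th:big_bounds}, namely $|v_j^\pm(s)|\le C_j^\pm\MSM\big(k|ks|^{-\delta^\pm_j}+k(k|s|+c_r)^{-1}\big)$ for $0<|s|\le1/k$, and to show that under the separation condition \eqref{as:c_r_is_0} the second summand is pointwise dominated by the first, so that the two may be merged at the expense of enlarging the constant.

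First I would observe that the only requirement on $c_r$ in Theorem \ref{th:big_bounds} is $\dist(\{\mathbf{P}_j:j=1,\ldots,\numSides\},\gamma)>c_r/k$ with $c_r\in(0,1]$. Since each corner $\mathbf{P}_j$ lies on $\Gamma$, we have $\dist(\mathbf{P}_j,\gamma)\ge\dist(\Gamma,\gamma)\ge1/k$ by \eqref{as:c_r_is_0}, so we may take $c_r$ to be any value in $(0,1)$ — or, equivalently, note that the bound of Theorem \ref{th:big_bounds} is monotone in $c_r$ and extends continuously to $c_r=1$, so we may set $c_r=1$ in the estimate.

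With $c_r=1$, I would bound the offending term trivially by $k(k|s|+c_r)^{-1}=k(k|s|+1)^{-1}\le k$; and since $0<|s|\le1/k$ gives $|ks|\le1$, hence $|ks|^{-\delta^\pm_j}\ge1$ (as $\delta^\pm_j>0$), we get $k|ks|^{-\delta^\pm_j}\ge k\ge k(k|s|+c_r)^{-1}$ on the whole range. Substituting this back yields $|v_j^\pm(s)|\le C_j^\pm\MSM\big(k|ks|^{-\delta^\pm_j}+k|ks|^{-\delta^\pm_j}\big)=2C_j^\pm\MSM k|ks|^{-\delta^\pm_j}$, and absorbing the factor $2$ into the constant (which still depends only on $c_*$, $c_r$ and $\Omega_j$, respectively $\Omega_{j+1}$) gives the claimed simplified bound. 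I do not anticipate any genuine obstacle here; the only point needing a word of care is the passage from the non-strict hypothesis $\dist(\Gamma,\gamma)\ge1/k$ to an admissible choice $c_r\in(0,1]$, which is disposed of as above, and the rest is elementary.
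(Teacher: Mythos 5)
Your proposal is correct and follows essentially the same route as the paper: set $c_r=1$ under the separation condition, note that $(k|s|+c_r)^{-1}\le 1$ so the second summand is bounded by $k$, which in turn is dominated by $k|ks|^{-\delta^\pm_j}$ since $|ks|\le 1$ and $\delta^\pm_j>0$, and absorb the resulting factor into the constant. Your extra remark about the strict inequality in the hypothesis of Theorem \ref{th:big_bounds} is a minor point of care that the paper glosses over, but it does not change the argument.
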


\begin{proof}
If the separation condition \eqref{as:c_r_is_0} holds, we can choose $c_r=1$ in Theorem \ref{th:big_bounds}, {from which $(k|s|+c_r)^{-1}\leq1$.
The term $k(k|s|+c_r)^{-1}$ is therefore dominated by the term $k|ks|^{\delta^\pm_j}$ for $0<|s|\leq1/k$.
}
\end{proof}

The separation condition \eqref{as:c_r_is_0} aligns the bounds of Theorem \ref{th:big_bounds} with the well-studied single scattering HNA configurations of \citet[Theorem 5.2]{HeLaMe:13_}. Hence, all best approximation results for the single scattering case may be applied to the approximation on $\Gamma$ in the multiple scattering problems we consider here.

\begin{remark}\label{re:MSHNA_uigen}
The result of Theorem \ref{th:big_bounds} may be extended to {an incident wave of source-type}, for example the point source emanating from $\bfs\in D$, $u^i(\bfx)=H^{(1)}_0(k|\bfx-\bfs|)$. 
This requires that the position of the source point $\bfs$ is separated by a distance of at least $1/k$ from $\resub{\Omega}$ (similar to the separation condition \eqref{as:c_r_is_0}), see \citet[\S3.2]{Gi:17} for details.
\end{remark}


\section{{$hp$} approximation space}\label{s:approx_space}
We will combine two approximation spaces: the HNA-BEM space on $\Gamma$ and a standard $hp$-BEM space on $\gamma$. Hereafter, using the parametrisation of the boundaries $\Gamma$ and $\gamma$, we identify $L^2(\Gamma_j)$ with $L^2(0,L_j)$, and $L^2(\gamma)$ with $L^2(0,L_\gamma)$.

\subsection{HNA-BEM approximation on {$\Gamma$}}\label{s:HNAspace}
As in previous HNA methods, on $\Gamma$ we approximate only the diffracted waves
\begin{equation}\label{diff_rep}
v_\Gamma(s):=\frac{1}{k}\left( v_j^+(s-\tL_{j-1})\e^{\imag k s}
+v_j^-(\tL_{j}-s)\e^{-\imag k s} \right),
\quad s\in\left[\tL_{j-1},\tL_{j}\right],\;j=1,\ldots,\numSides,
\end{equation}
where $v_j^\pm$ are as in \eqref{vp}--\eqref{vm}, and broadly speaking this is done using basis elements of the form
\begin{equation*}
v_\Gamma(s)\approx\left( P_j^+(s-\tL_{j-1})\e^{\imag k s}+P_j^-(\tL_{j}-s)\e^{-\imag k s} \right),\quad s\in\left[\tL_{j-1},\tL_{j}\right],\;j=1,\ldots,\numSides,
\end{equation*}
where $P_j^\pm$ are piecewise polynomials on a graded mesh. 
There are two well-studied classes of $hp$ approximation space we may use to do this. Both spaces consist of piecewise polynomials multiplied by oscillatory functions oscillating in both directions along the surface of $\Gamma$, and both spaces are constructed on meshes graded towards the singularities at the corners of $\Gamma$. 
{We briefly describe these approximation spaces here:}
\begin{enumerate}[(i)]
\item \emph{The overlapping-mesh space}, used in original HNA methods for single scatterers, this discrete space is the {sum} of two subspaces, each constructed on a separate mesh graded in opposite directions. 
{On $\Gamma_j$, the subspace on the mesh graded towards $\tL_{j-1}$ is used to approximate $v_j^+(s-\tL_{j-1})\e^{\imag k s}$ and the subspace on the mesh graded towards $\tL_j$ is used to approximate $v_j^-(\tL_{j}-s)\e^{-\imag k s}$.}
Details can be found in \citet[\S5]{HeLaMe:13_}.
\item \emph{The single-mesh space}, constructed on a single mesh graded towards both edges. 
{This space can easily be implemented by adapting a standard BEM code, as the mesh is of a more standard type.}
However, care must be taken close to the corners of $\Gamma$: certain elements must be removed from the approximation space to ensure the discrete system does not become too ill-conditioned. 
We will define this space shortly.
\end{enumerate}

A range of numerical experiments comparing both approximation spaces for collocation HNA-BEM can be found in \citet{Pa:15}. 
For either choice of mesh, we denote by $\numLayersj$ the number of grading layers and by $p_j$ the maximum polynomial degree on the $j$th side (in terms of the notation of \citet{ChLa:07} and \citet{HeLaMe:13_}, we choose $p_j=p^+_j=p^-_j$, $\numLayersj=\numLayersj^+=\numLayersj_-$ for simplicity).
We denote by $\sigma>0$ the grading parameter, so that the smallest mesh element of $\Gamma_j$ (touching the corners of $\Gamma_j$) has length $L_j\sigma^{n_j}$. 

The single-mesh space has been described in the {theses} \citet{Gi:17,Pa:15} and is used for the numerical experiments in \S\ref{s:results}; we define it here for convenience.


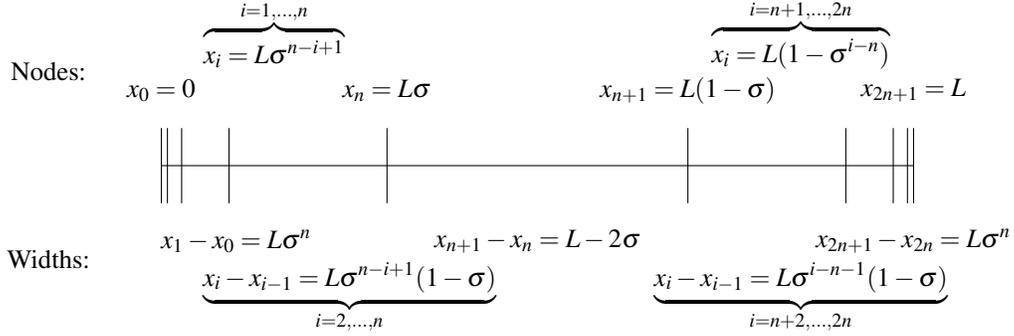
\begin{figure}[htb]\centering
\begin{tikzpicture}
\newcommand{\meshheight}{0.5}
\draw (-1.5,2.5*\meshheight) node {Nodes:};
\draw (-1.5,-2.5*\meshheight) node {Widths:};
\draw (0,-\meshheight) -- (0,\meshheight);
\draw (0,2*\meshheight) node {$x_0=0$};
\draw (1,-\meshheight-0.5) node {${x_1-x_0=L\sigma^{\numLayers}}$};
\draw (0.9,-\meshheight) -- (0.9,\meshheight);
\draw (1.5,3.5*\meshheight) node {$\overbrace{x_i=L\sigma^{\numLayers-i+1}}^{i=1,\ldots,\numLayers}$};
\draw (2.5,-3.5*\meshheight) node {$\underbrace{x_i-x_{i-1}=L\sigma^{\numLayers-i+1}(1-\sigma)}_{i= 2,\ldots,\numLayers}$};
\draw (3,-\meshheight) -- (3,\meshheight);
\draw (3,2*\meshheight) node {$x_{\numLayers}=L\sigma$};
\draw (5,-2*\meshheight) node {$x_{\numLayers+1}-x_{\numLayers}=L-2\sigma$};
\draw (7,-\meshheight) -- (7,\meshheight);
\draw (7,2*\meshheight) node {$x_{\numLayers+1}=L(1-\sigma)$};
\draw (9.1,-\meshheight) -- (9.1,\meshheight);
\draw (8.5,3.5*\meshheight) node {$\overbrace{x_{i}=L(1-\sigma^{{i-\numLayers}}) 
}^{i=n+1,\ldots,2n}$};
\draw (8.5,-3.5*\meshheight) node {$\underbrace{x_i-x_{i-1}=L\sigma^{i-\numLayers-1}(1-\sigma)}_{i=\numLayers+{2},\ldots,2\numLayers}$};
\draw (10,-\meshheight) -- (10,\meshheight);
\draw (10,2*\meshheight) node {$x_{2\numLayers+1}=L$};
\draw (10,-2*\meshheight) node {$x_{2\numLayers+1}-x_{2\numLayers}=L\sigma^{\numLayers}$};
\draw(0,0) -- (10,0);
\draw (0.27,-\meshheight) -- (0.27,\meshheight);
\draw (10-0.27,-\meshheight) -- (10-0.27,\meshheight);
\draw (0.081,-\meshheight) -- (0.081,\meshheight);
\draw (10-0.081,-\meshheight) -- (10-0.081,\meshheight);
\end{tikzpicture}
\caption{{The single-mesh space of Definition \ref{mesh_def} on a segment $[0,L]$.}}
\label{fig:mesh}
\end{figure}
\newpage
\begin{definition}\label{mesh_def}
Given ${L}>0$,
$\numLayers\in\N$ and a grading parameter $\sigma\in(0,1/2)$, we denote by $\calM _\numLayers(0,{L})=\{x_0,\ldots,x_{2n+1}\}$ the symmetric geometrically graded mesh on $[0,{L}]$ with $n$ layers in each direction, whose $2n+2$ meshpoints
$x_i$ are defined by
\begin{align*}
x_0:=&0,\\
x_i:=&{L}\sigma^{n-i+1},\quad&&\text{for }i=1,\ldots,\numLayers,\\
x_{i}:=&{L}(1-\sigma^{{i-\numLayers}}
),\quad&&\text{for }i=n+1,\ldots,2\numLayers,\\
x_{2n+1}:=&{L}.
\end{align*}
For a vector $\mathbf{p}=(p_1,\ldots,p_{\numLayers+1})\in(\N_0)^{\numLayers+1}$ we denote by $\calP _{\mathbf{p},\numLayers}$ the space of piecewise polynomials on $\calM _\numLayers(0,{L})$ with degree vector $\mathbf{p}$, i.e.
\begin{align*}
\calP _{\mathbf{p},n}(0,{L}):=\left\{
\begin{array}{l}\rho\in L^2(0,L):  
\rho|_{(x_{i-1},x_i)}\text{ and }\rho|_{(x_{2\numLayers+1-i},x_{2\numLayers-i+2})}\\
\quad\text{ are polynomials of degree at most $p_i$ for }i=1,\ldots,\numLayers+1\end{array}\right\}.
\end{align*}
\end{definition}

We first define two spaces for each side $\Gamma_j$, $j=1,\ldots,\numSides$, using $\numLayersj\in\N$ to determine the degree of mesh grading and the vectors $\mathbf{p}_j$ to determine the polynomial degree on each mesh element:
\begin{align*}
{V}^+_j:=\Big\{&v\in L^2(0,L_\Gamma): 
v|_{(\tL_{j-1},\tL_j)}(s)=\trho(s-\tL_{j-1})\e^{\imag ks},
\trho\in\calP _{\mathbf{p}_j,\numLayersj}(0,L_j),\\
&\qquad\rho|_{(0,{L}_{\Gamma})\setminus(\tL_{j-1},\tL_j)}=0
\Big\},
\\
{V}^-_j:=\Big\{&v\in L^2(0,L_\Gamma): 
v|_{(\tL_{j-1},\tL_j)}(s)=\trho(\tL_j-s)\e^{-\imag ks},
\trho\in\calP _{\mathbf{p}_j,\numLayersj}(0,L_j),\\
&\qquad\rho|_{(0,{L}_{\Gamma})\setminus(\tL_{j-1},\tL_j)}=0
\Big\}.
\end{align*}
As is explained in Remark \ref{why_remove}, to avoid ill-conditioning of the discrete system we must remove certain basis functions {supported on} {the elements within a given distance from the corners:}
\[
\tV_j:=\spank\left(\Big\{v\in  V^-_j : v|_{[\tL_{j-1},\tL_{j-1}+x_{\tn_j}]}=0\Big\}\cup\Big\{v\in  V^+_j : v|_{[\tL_{j}-x_{\tn_j},\tL_{j}]}=0\Big\}\right)
\]
where 
\begin{align*}
x_{\tn_j}:=
\max\Big\{x_i\in\calM _{\numLayersj}(0,{L}_j) \text{ such that }{x_{i}}\leq\alpha_{j}\frac{2\pi}{k}\Big\}
\end{align*}
and $\alpha_{j}$ is a parameter chosen such that $0<\alpha_{j}<L_j k/(4\pi)$, bounded independently of $k$ and $\bfp_j$, used to fine-tune the space. 
Put simply, there are two basis functions on {(large)} elements sufficiently far from the corners, and one basis element on {(small)} elements close to the corners. 
The parameter $\alpha_{j}$ determines what is meant by \emph{sufficiently far}. Hence the single-mesh approximation space with dimension $\DOFsHNA$ is defined as
\[
\VHNA:=\spank\bigcup_{j=1}^{\numSides}\tV_j.
\]

\begin{remark}[Why basis elements of the single-mesh space are removed]
\label{why_remove}
Since the mesh is strongly graded to approximate the singularities of $v^\pm_j$, some of its elements are much smaller than the wavelength of the problem, thus on these elements $\e^{\pm\imag ks}$ are roughly constant and the functions of ${V}^+_j$ supported on these elements are numerically indistinguishable from those on ${V}^-_j$, leading to an ill-conditioned discrete system of Galerkin equations set in ${V}_j^+\cup {V}_j^-$. 
To avoid this, in these elements we maintain only one of these two contributions. Intuitively, $\alpha_{j}$ can be thought of as the value such that
in all mesh elements with distance from one of the segment endpoints smaller than $\alpha_{j}$, the space $\tV_j$ supports polynomials multiplied with only one of the waves $\e^{\pm\imag ks}$.
As the parameter {$\alpha_{j}$} increases, fewer degrees of freedom are used and the conditioning of the discrete system is improved, but the accuracy of the method is reduced, hence care must be taken when selecting $\alpha_{j}$. 
\end{remark}

In much of what follows, the choice of single- or overlapping-mesh HNA space is irrelevant, hence we shall use $\VHNA$ to denote {either}, but will make clear the cases for which the choice is significant. For the overlapping-mesh space, best approximation estimates were derived in \citet[Theorem~5.4]{HeLaMe:13_}. 
The following result from \citet[Corollary~2.11]{Gi:17} compares the best approximation of the single-mesh and overlapping-mesh spaces, on $\Gamma$.

\begin{theorem}\label{co:bestApproxGamma}
Suppose that the obstacles $\resub{\Omega}$ and $\resub{\omega}$ are sufficiently far apart so that the separation condition \eqref{as:c_r_is_0} holds. 
{Let $\VHNA$ be an HNA space as above, $c_j>0$ be such that the polynomial degrees $p_j$ and the numbers of layers $n_j$ satisfy}
\begin{equation}\label{cjCondish}
n_j\geq c_jp_j,\quad\text{for }j=1,\ldots,\numSides,
\end{equation}
{and denote $p_\Gamma:=\min_j\{p_j\}$.}
Then we have the following best approximation estimate for the diffracted wave $v_\Gamma$ (of \eqref{diff_rep}):
\begin{equation*}
\inf_{w_{N_\Gamma}\in \VHNA}\|v_\Gamma-w_{N_\Gamma}\|_{L^2(\Gamma)}\leq C_\Gamma k^{-1/2}\MSM J(k)\e^{-p_\Gamma\tau_\Gamma},
\end{equation*}
where $C_\Gamma$ is a constant independent of $k$ and
\[
J(k):=
\begin{cases}
(1+kL_*)^{1/2-\delta_*}+\log^{1/2}(2+kL_*),& \VHNA \text{ overlapping-mesh,}\\
(1+kL_*)^{1/2-\delta_*}+\log^{1/2}(2+kL_*)+\sqrt k(kI_*)^{-\delta_*},& \VHNA \text{ single-mesh.}
\end{cases}
\]
with $I_*$ and $\tau_\Gamma$ independent of $n_j,p_j,k$ (both are defined precisely in \citet[Corollary~2.11]{Gi:17}), 
$\delta_*:=\min_{j,\pm}\{\delta^\pm_j\}$ (with $\delta_j^\pm$ as in Theorem~\ref{th:big_bounds}), whilst $L_*:=\max_j L_j$ the length of the longest side of $\resub{\Omega}$.
For the single-mesh space, it follows that $C_\Gamma=\max_j\{C_j\}$ for $C_j$ of \citet[Theorem~2.9]{Gi:17}. For the overlapping-mesh space, $C_\Gamma$ is equal to the constant $C_4$ of \citet[Theorem~5.5]{HeLaMe:13_}.
\end{theorem}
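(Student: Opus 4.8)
The plan is to reduce the multiple-scattering best approximation estimate to the single-scattering results already available in the literature, using the separation condition \eqref{as:c_r_is_0} as the bridge. The key observation — already flagged by Corollary \ref{th:c_r_is_0} and the remark following it — is that once \eqref{as:c_r_is_0} holds, the bounds of Theorem \ref{th:big_bounds} on the diffracted envelopes $v_j^\pm$ become identical in form (up to the different leading-order term, which does not affect regularity) to the single-scattering bounds of \citet[Theorem~3.2]{HeLaMe:13_} and \citet[Theorem~5.2]{HeLaMe:13_}. Specifically, $v_j^\pm$ is analytic in $\re[s]>0$, bounded by $C_j^\pm\MSM k|ks|^{-\delta_j^\pm}$ for $0<|s|\le 1/k$ and by $C_j^\pm\MSM k|ks|^{-1/2}$ for $|s|>1/k$, with $\delta_j^\pm\in(0,1/2)$ determined by the exterior angles of $\resub{\Omega}$. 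These are exactly the hypotheses under which the single-scattering $hp$ best approximation analysis is carried out.

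First I would invoke the single-scattering best approximation theorem verbatim. For the overlapping-mesh space this is \citet[Theorem~5.4]{HeLaMe:13_} (equivalently \citet[Theorem~5.5]{HeLaMe:13_} for the explicit constant $C_4$); for the single-mesh space it is \citet[Theorem~2.9]{Gi:17} and the comparison result \citet[Corollary~2.11]{Gi:17}. Since the HNA space $\VHNA$ approximates only the diffracted part $v_\Gamma$ of \eqref{diff_rep}, and $v_\Gamma$ on each side $\Gamma_j$ is $\frac1k(v_j^+(s-\tL_{j-1})\e^{\imag ks}+v_j^-(\tL_j-s)\e^{-\imag ks})$, the estimate is applied side-by-side. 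On each $\Gamma_j$ one chooses the local piecewise polynomial $P_j^\pm\in\calP_{\mathbf p_j,n_j}$ furnished by the single-scattering theory to approximate $v_j^\pm$ to within the stated exponential-in-$p_j$ tolerance, using that $n_j\ge c_j p_j$ (condition \eqref{cjCondish}) guarantees enough grading layers to resolve the $|ks|^{-\delta_j^\pm}$ singularity. Summing the squared $L^2(\Gamma_j)$ errors over $j=1,\ldots,\numSides$ and taking $p_\Gamma=\min_j p_j$, $\delta_*=\min_{j,\pm}\delta_j^\pm$, $L_*=\max_j L_j$ collapses the per-side bounds into the single global estimate with $C_\Gamma=\max_j C_j$ and rate $\e^{-p_\Gamma\tau_\Gamma}$. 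The factor $k^{-1/2}\MSM$ and the $k$-dependent polynomial/log factor $J(k)$ are precisely what the single-scattering estimates produce, with the extra $\sqrt k(kI_*)^{-\delta_*}$ term in the single-mesh case accounting for the basis functions removed near the corners (cf. Remark \ref{why_remove}).

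The main obstacle — really the only subtlety — is confirming that the single-scattering best approximation machinery genuinely carries over unchanged despite the modified ansatz. Two points need care. First, the leading-order term in \eqref{dudn_rep} is no longer the single-scattering Physical Optics term alone: it includes the interaction term $\calG_\gtG[\pdivl{u}{\bfn}|_\gamma]$, which is \emph{not} approximated by $\VHNA$ (it is absorbed elsewhere in the formulation), so one must be clear that the quantity being approximated here is only $v_\Gamma$, and that the regularity of $v_\Gamma$ is governed solely by Theorem \ref{th:big_bounds}, independently of $\gamma$. Second, the constants $C_j^\pm$ in Theorem \ref{th:big_bounds} depend on $c_*$, $c_r$ and the exterior angles — under \eqref{as:c_r_is_0} we fix $c_r=1$, so these match the single-scattering constants, but one should note explicitly that $\MSM$ here is the multiple-scattering $L^\infty$-norm of Theorem \ref{Mu_bdMS}/Assumption \ref{as:u_max_alg}, larger than its single-scattering analogue. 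Once these identifications are made, the proof is essentially a citation of \citet[Corollary~2.11]{Gi:17} and \citet[Theorem~5.4]{HeLaMe:13_}, applied on each side and summed; no genuinely new estimate is required.
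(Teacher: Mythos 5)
Your proposal matches the paper's own route exactly: the paper proves no new estimate here, but (as stated in the paragraph following Corollary \ref{th:c_r_is_0}) uses the separation condition \eqref{as:c_r_is_0} to align the envelope bounds of Theorem \ref{th:big_bounds} with the single-scattering case, and then imports the best approximation results of \citet[Theorem~5.4]{HeLaMe:13_} and \citet[Corollary~2.11]{Gi:17} side-by-side, with $\MSM$ now the multiple-scattering sup-norm. Your two flagged subtleties (that only $v_\Gamma$ is approximated by $\VHNA$, and the provenance of the constants) are correctly resolved and consistent with the paper.
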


Theorem \ref{co:bestApproxGamma} shows that we obtain exponential convergence of the best approximation to $v_\pm$ {with respect to $p_\Gamma$, which controls both polynomial degree and mesh grading (via \eqref{cjCondish}),} across all wavenumbers $k$.
To maintain accuracy as $k$ increases
{one needs to increase $p_\Gamma$ in proportion to $\log k$, and hence the total number of degrees of freedom (which is proportional to $p_\Gamma^2$) in proportion to $\log^2 k$.}

\begin{remark}\label{deg_vec}
It is shown in \citet[Theorem~A.3]{HeLaMe:13} for the overlapping-mesh HNA space that it is possible to reduce the number of degrees of freedom on $\Gamma$, whilst maintaining exponential convergence, by reducing the polynomial degree in the smaller mesh elements{, as is standard in $hp$ schemes}.
For example, given a polynomial degree $p_j>1$, we {can} define for each side $\Gamma_j$, $j=1,\ldots,\numSides $, a degree vector $\bfp_j$ by
\[
(\bfp_j)_i:=\left\{
\begin{array}{cc}
p_j-\left\lfloor\frac{n_j+1-i}{n_j}p_j\right\rfloor,&\quad 1\leq i\leq n_j,\\
p_j,&\quad i= n_j+1,
\end{array}\right.
\]
where {$n_j$ is} as in Definition \ref{mesh_def} of the single-mesh space. This may be applied to either the single or overlapping mesh, and results in a linear reduction of polynomial degree on mesh elements closer to the corners of $\Gamma_j$. Numerical experiments in \S\ref{s:results} suggest that exponential convergence is maintained for the single-mesh HNA space if the degrees of freedom are reduced in this way, although we do not prove this here.
\end{remark}

\subsection{Standard {$hp$}-BEM approximation on {$\gamma$}}
\label{s:hpBEM}
If Assumption \ref{as:u_max_alg} holds, as is the case in the configurations of Theorem \ref{Mu_bdMS}, it follows from Theorem \ref{co:bestApproxGamma} that it is sufficient for the number of DOFs in $\VHNA$ to grow logarithmically with $k$, to accurately approximate $v_\pm$. However, this tells us nothing about the DOFs required on $\gamma$. 
To account for the 
contribution from $\gamma$, we parametrise $\bfx_\gamma:[0,L_\gamma]\rightarrow\gamma$ and construct an appropriate (depending on the geometry of $\resub{\omega}$) $\DOFshp$-dimensional approximation space $\Vhp\subset L^2(0,L_\gamma)$ for
\begin{equation}\label{lil_gamma_par}
v_\gamma(s):=\frac1k\pdiv{u}{\bfn}\big(\bfx_\gamma(s)\big),\quad s\in[0,L_\gamma].
\end{equation}
While a representation analogous to \eqref{dudn_rep} holds on $\gamma$ when $\resub{\omega}$ is a convex polygon, this approach is not suitable for the present multiple scattering approximation. 
If such a representation were used on multiple polygons, the system to solve would need to be written as a Neumann series and solved iteratively.
This alternative approach is outlined briefly in \citet[\S4.4.1]{Gi:17}. 
Instead we approximate the full solution $v_\gamma$, rather than any of its individual components as listed in \eqref{dudn_rep}. An advantage of the approach in this paper is that the only restriction imposed on $\gamma$ is that it must be Lipschitz and piecewise analytic. 
The disadvantage is that the number of DOFs required to approximate the solution on $\gamma$ has to increase with frequency to maintain accuracy, as is typical of standard $hp$-schemes. For all $k$ such that $|\gamma|$ is small compared with the wavelength $2\pi/k$, one would not expect this increase in DOFs to be significant.
Here, we take $\Vhp$ to be a standard $hp$-BEM approximation space consisting of piecewise polynomials to approximate $v_\gamma$, with mesh and degree vector dependent on the geometry of $\resub{\omega}$. 

	We now aim to bound the approximation of the solution on $\gamma$, in terms of key parameters, for the case where $\gamma$ is analytic. This will enable us to quantify the $k$-dependence of our method, which we expect to be mild when $|\gamma|$ is small compared with the wavelength. A range of tools were developed in \citet{LoMe:11} for $hp$-BEM approximations for problems of scattering by analytic surfaces, provided bounds on $\calA_{k,\eta}^{-1}$ are available. For this, we are able to use recently developed theory of $(R_0,R_1)$ configurations (of Definition \ref{def:R0R1}) for which we have from \citet[(1.28)]{ChSpGiSm:17}: if $\eta=O(k)$, then given $k_0>0$,
\begin{equation}\label{ineq:AinvBd}
\|\calA_{k,\eta}^{-1}\|_{L^2(\Gamma\cup\gamma)\shortrightarrow L^2(\Gamma\cup\gamma)} \lesssim k^2, \quad\text{for }k\geq k_0.
\end{equation}
In the class of problems we consider, the total boundary $\Gamma\cup\gamma$ is not analytic, because $\Gamma$ is the boundary of a polygon. Therefore we could not apply the theory of \citet{LoMe:11} to a standard $hp$ approximation on $\Gamma\cup\gamma$. However, in our method the standard $hp$ approximation is only on  $\gamma$, which in this section we will restrict to be analytic; Theorem \ref{co:bestApproxGamma} provides a best approximation estimate for the HNA space on the polygon $\Gamma$. As we shall see, this is sufficient to get a best approximation estimate for $v_\gamma$ in the standard $hp$ space $\Vhp$. The main idea is to consider an equivalent problem of scattering by (only) the obstacle $\resub{\omega}$, with the contribution from $\resub{\Omega}$ absorbed into the incident field. We can rewrite the representation \eqref{Green}
\[
u(\bfx) = u^i(\bfx) - \int_\Gamma\Phi_k(\bfx,\bfy)\pdiv{u}{\bfn}(\bfy)\dd{s}(\bfy)
- \int_\gamma\Phi_k(\bfx,\bfy)\pdiv{u}{\bfn}(\bfy)\dd{s}(\bfy),\quad\bfx\in D,
\]
separating the contribution from the convex polygon $\Gamma$. To construct an equivalent problem, we consider the additional component of the incident field to be the contribution from $\Gamma$:
\begin{equation}\label{eq:fieldDef}
u^i_\Gamma(\bfx) := - \int_\Gamma\Phi_k(\bfx,\bfy)\pdiv{u}{\bfn}(\bfy)\dd{s}(\bfy) = -\int_\Gamma \Phi_k(\bfx,\bfy)\calA_{k,\eta}^{-1}f_{k,\eta}(\bfy)\dd{s}(\bfy),\quad\bfx\in T_\gamma,
\end{equation}
where $T_\gamma$ is a tubular neighbourhood of $\gamma$, i.e. for some $\epsilon>0$ we have
\[
T_\gamma:=\{\bfx\in\R^2|\dist(\bfx,\gamma)<\epsilon\},
\]
with $\epsilon$ chosen such that $\dist(T_\gamma,\Gamma)>0$.  Our equivalent problem is therefore scattering of $u^i+u^i_\Gamma$ by $\resub{\omega}$, in $T_\gamma$. It is straightforward to see that the solution to this equivalent problem is the same as the solution to the BVP \eqref{Helmholtz}--\eqref{SRC} (restricted to $T_\gamma$). To use the $hp$ theory developed in \citet{LoMe:11}, we must show that the solution to our scattering problem is in the space of \citet[Definition~1.1]{LoMe:11}:
\begin{equation}\label{def:squigglyU}
\mathcal{U}(\psi,\xi,T_\gamma\setminus\gamma):=\{
\|\nabla^n\varphi\|_{L^2(T_\gamma\setminus\gamma)}\leq \xi^n\psi(k)\max\{n+1,|k|\}^n,\ \forall n\in\N_0
\}
\end{equation}
for some $\xi$ independent of $k$, $h$, $p$ and
\begin{equation}\label{def:nabuiG} 
|\nabla^nu(\bfx)|^2 := \sum_{\alpha\in\N_0^2:|\alpha| = n}\frac{n!}{\alpha!}|D^\alpha u(\bfx)|^2.
\end{equation}
A prerequisite for $u\in\mathcal{U}(\psi,\xi,T_\gamma\setminus\gamma)$ is that the incident field to our equivalent problem $u^i+u^i_\Gamma$ is also in $\mathcal{U}(\psi,\xi,T_\gamma\setminus\gamma)$, possibly for different parameters $\psi$ and $\xi$.

\begin{lemma}\label{lem:incGinU}
	If $\resub{\Omega}\cup\resub{\omega}$ is an $(R_0,R_1)$ configuration, then 
	\[
	u^i_\Gamma\in \mathcal{U}(\psi,1,T_\gamma)
	\]
	where $\psi(k):=C k^{7/2}\log^{1/2}(k\diam(\Gamma)+1)$ with
	$C>0$ a constant independent of $k$.
\end{lemma}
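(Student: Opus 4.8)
The plan is to treat $u^i_\Gamma$ as a single-layer potential whose density is controlled by the inverse of the combined potential operator, and then to exploit that this potential solves the homogeneous Helmholtz equation away from $\Gamma$. By \eqref{eq:fieldDef}, $u^i_\Gamma(\bfx)=-\int_\Gamma\Phi_k(\bfx,\bfy)\phi(\bfy)\dd s(\bfy)$ for $\bfx\in T_\gamma$, with density $\phi:=(\calA_{k,\eta}^{-1}f_{k,\eta})|_\Gamma$. The first step is to bound $\|\phi\|_{L^2(\Gamma)}$: since $\eta=O(k)$, $|u^i|=1$ and $|\pdivl{u^i}{\bfn}|\le k$, the data \eqref{def:f} satisfies $\|f_{k,\eta}\|_{L^2(\partial D)}\lesssim k$; and since $\Omega\cup\omega$ is an $(R_0,R_1)$ configuration, the resolvent bound \eqref{ineq:AinvBd} gives $\|\calA_{k,\eta}^{-1}\|_{L^2(\partial D)\to L^2(\partial D)}\lesssim k^2$, whence $\|\phi\|_{L^2(\Gamma)}\le\|\calA_{k,\eta}^{-1}f_{k,\eta}\|_{L^2(\partial D)}\lesssim k^3$.

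Next I would bound $u^i_\Gamma$ on a slightly enlarged tubular neighbourhood $T_\gamma'\supset\overline{T_\gamma}$ of $\gamma$ that still satisfies $\dist(T_\gamma',\Gamma)>0$. By Cauchy--Schwarz,
\[
\|u^i_\Gamma\|_{L^\infty(T_\gamma')}\le\Big(\sup_{\bfx\in T_\gamma'}\|\Phi_k(\bfx,\cdot)\|_{L^2(\Gamma)}\Big)\,\|\phi\|_{L^2(\Gamma)},
\]
and, running the argument in the proof of Lemma~\ref{lem:S_k_bd} on $\Gamma$ (the positive separation $\dist(T_\gamma',\Gamma)>0$ controlling the near field), the supremum is $\lesssim k^{-1/2}\log^{1/2}(1+k\diam(\Gamma))$; hence $\|u^i_\Gamma\|_{L^\infty(T_\gamma')}\lesssim k^{5/2}\log^{1/2}(1+k\diam(\Gamma))$.

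Finally, since $\phi$ is supported on $\Gamma$ and $\dist(T_\gamma',\Gamma)>0$, the potential $u^i_\Gamma$ is smooth in $T_\gamma'$ and satisfies $\Delta u^i_\Gamma+k^2u^i_\Gamma=0$ there. I would then invoke a $k$-explicit interior analyticity estimate for Helmholtz solutions --- of the type underpinning the $hp$-analysis of \citet{LoMe:11}, and obtainable from Cauchy estimates for the (here non-singular) Hankel kernel together with the band-limited structure of Helmholtz solutions --- to deduce, for $T_\gamma$ compactly contained in $T_\gamma'$,
\[
\|\nabla^n u^i_\Gamma\|_{L^2(T_\gamma)}\le C\,\big(\max\{n+1,k\}\big)^n\,\|u^i_\Gamma\|_{L^\infty(T_\gamma')},\qquad n\in\N_0,
\]
with $C$ independent of $n$ and $k$. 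Combining the three estimates gives $\|\nabla^n u^i_\Gamma\|_{L^2(T_\gamma)}\le Ck^{7/2}\log^{1/2}(k\diam(\Gamma)+1)\,(\max\{n+1,k\})^n$, i.e.\ $u^i_\Gamma\in\mathcal U(\psi,1,T_\gamma)$ with the stated $\psi$; the power $k^{7/2}$ leaves room for the crude estimates above.

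The main obstacle is the last step: one must establish the $k$-explicit derivative growth $\max\{n+1,k\}^n$ --- rather than a generic $n!$-type bound --- so that it is admissible for the class $\mathcal U$, with a multiplicative constant independent of $n$ and polynomial in $k$. This requires care over the size of the gap $\dist(T_\gamma,\Gamma)$, which under the separation condition \eqref{as:c_r_is_0} may be as small as $O(1/k)$: this gap sets the analyticity radius (after rescaling by $k$) and so governs whether the constant $\xi$ in $\mathcal U(\psi,\xi,\cdot)$ can be taken equal to $1$. The other ingredients --- the data and resolvent bounds, the single-layer estimate, and differentiation under the integral sign --- are routine.
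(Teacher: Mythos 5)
Your proposal follows essentially the same route as the paper: bound the density by $\|\calA_{k,\eta}^{-1}\|\,\|f_{k,\eta}\|_{L^2}\lesssim k^{3}$, bound $u^i_\Gamma$ on a neighbourhood of $T_\gamma$ via the single-layer mapping property of Lemma \ref{lem:S_k_bd} to obtain the factor $k^{5/2}\log^{1/2}(k\diam(\Gamma)+1)$, and then upgrade to the derivative bounds defining $\mathcal{U}$ by a $k$-explicit interior regularity estimate for Helmholtz solutions. The step you flag as the main obstacle is exactly what the paper supplies by citing \citet[Theorem~B.6]{Me:12} (giving $\|u^i_\Gamma\|_{H^{n+2}(T_\gamma)}\le Ck^{n+2}\|u^i_\Gamma\|_{L^2(\omega)}$ on a fixed, $k$-independent enlargement), so for a fixed configuration your worry about an $O(1/k)$ gap does not arise.
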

\begin{proof}
	Throughout the proof we let $C$ denote an arbitrary constant independent of $k$ and $n$. It follows from standard mapping properties of the single-layer operator (e.g., \citet[Theorem~2.15(i)]{ACTA}) that $u^i_\Gamma\in H^{1}(\omega)$, where $\omega$ is a bounded open subset of $\R^2$ containing $\resub{\Omega}\cup\resub{\omega}$. We may therefore bound using \citet[Theorem~B.6]{Me:12}, choosing zero forcing term to obtain
	\begin{equation}\label{eq:Hsp2}
	\|u^i_\Gamma\|_{H^{n+2}(T_\gamma)}\leq C k^{n+2}\|u^i_\Gamma\|_{L^2(\omega)},\quad\text{for }k\geq k_0, \quad n\in\N_0,
	\end{equation}
	given $k_0>0$, where $\omega$ is a bounded open set compactly containing $T_\gamma$ and $\resub{\Omega}$. From \eqref{def:nabuiG}, we see that the norm is the sum of $n+1$ terms, hence
	\begin{align}
	\|\nabla^n u^i_\Gamma\|_{L^2(T_\gamma)}^2 &\leq (n+1)!\|u^i_\Gamma\|_{H^{n+2}(T_\gamma)}^2,\\
	&\leq C (n+1)^nk^{n+2}\|u^i_\Gamma\|_{L^2(\omega)}^2,\quad\text{for }k\geq k_0, \quad n\in\N_0,\label{eq:nabnu}
	\end{align}
	given $k_0>0$, which follows by combining with \eqref{eq:Hsp2} and $(n+1)!\leq (n+1)^n$. We now bound $u^i_\Gamma$ in terms of known quantities,
	\begin{equation*}
		\|u^i_\Gamma\|_{L^2(\omega)}\leq |\omega|^{1/2}\|S_k\|_{L^2(\Gamma)\shortrightarrow L^\infty(\omega)}\|\calA_{k,\eta}^{-1}\|_{L^2(\Gamma)\shortrightarrow L^2(\Gamma)}\|f_{k,\eta}\|_{L^2(\Gamma)}.
	\end{equation*}
	We may bound these norms using {Lemma~\ref{bd:SLgen}}, \eqref{ineq:AinvBd} and {\eqref{def:f}} (choosing $\eta = O(k)$) to obtain
	\begin{equation}\label{uiG_bound}
	\|u^i_\Gamma\|_{L^2(\omega)}\leq Ck^{5/2}\log^{1/2}(k\diam(\Gamma)+1).
	\end{equation}
	Finally, we can combine the bound \eqref{uiG_bound} with \eqref{eq:nabnu} to obtain
	\[
	\|\nabla^n u^i_\Gamma\|_{L^2(T_\gamma)} \leq C k^{7/2}\log^{1/2}(k\diam(\Gamma)+1)  \max\{n+1,k\}^n,\quad\text{for }k\geq k_0,\quad n\in\N_0,
	\]
	proving the assertion.
\end{proof}

Now we have shown sufficient conditions on the growth of the derivatives of $u^i_\Gamma$, we are ready to obtain best approximation estimates on $\gamma$.
\newcommand{\MelenkSigma}{\zeta}
\begin{proposition}\label{co:hpBAE}
Suppose $\Upsilon=\resub{\Omega}\cup\resub{\omega} $ is an $(R_0,R_1)$ configuration (in the sense of Definition \ref{def:R0R1}) and $\resub{\omega}$ has an analytic boundary $\gamma$. If $\Vhp$ is constructed on a quasi-uniform mesh (in the sense of \citet[][\S1]{LoMe:11}) with $kh/p_\gamma\lesssim 1$, where $h$ and $p_\gamma$ denote maximum mesh width and polynomial degree respectively, then given positive constants $k_0$, $\MelenkSigma$ independent of $k, p_\gamma$ and $h$ we have the following best approximation estimate:
\[
\inf_{w_{\DOFshp}\in \Vhp}\|v_\gamma-w_{\DOFshp}\|_{L^2(\gamma)}\leq C_\gamma(k) \e^{-\tau_\gamma(k) p_\gamma},\quad\text{for }k\geq k_0,
\]
where 
\begin{equation}\label{taUgammAhp}
\tau_\gamma(k)=\log\left(\min\left\{\frac{\MelenkSigma p_\gamma}{kh},\frac{\MelenkSigma+h}{h}\right\}\right),\quad\text{and}\quad
C_\gamma(k):=C k^{8}\log({k\diam(\Gamma)+1}),
\end{equation}
with $C>0$ a constant independent of $k,p_\gamma$ and $h$.
\end{proposition}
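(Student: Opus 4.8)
The plan is to recast the approximation of the Neumann trace on $\gamma$ as a \emph{single}-obstacle scattering problem --- scattering by $\omega$ of the enriched incident field $u^i+u^i_\Gamma$ of \eqref{eq:fieldDef} --- and then to invoke the wavenumber-explicit $hp$-BEM theory of \citet{LoMe:11}. As noted in the discussion around \eqref{eq:fieldDef}, the restriction of $u$ to the tubular neighbourhood $T_\gamma$ is precisely the total field of this equivalent problem, so its exterior Neumann trace on $\gamma$ equals $k v_\gamma$; it therefore suffices to estimate the $hp$-best approximation of this trace in $\Vhp$.

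The first step is to check the analytic-regularity hypothesis of \citet[Definition~1.1]{LoMe:11} for the equivalent incident field. For the plane wave, $|D^\alpha u^i|=k^{|\alpha|}|d_1|^{\alpha_1}|d_2|^{\alpha_2}$, so by the multinomial theorem $|\nabla^n u^i|\equiv k^n$ in the sense of \eqref{def:nabuiG}, whence $u^i\in\mathcal{U}(|T_\gamma|^{1/2},1,T_\gamma)$; Lemma~\ref{lem:incGinU} provides $u^i_\Gamma\in\mathcal{U}(\psi,1,T_\gamma)$ with $\psi(k)\lesssim k^{7/2}\log^{1/2}(k\diam(\Gamma)+1)$. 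Since the seminorms defining $\mathcal{U}$ in \eqref{def:squigglyU} satisfy the triangle inequality pointwise, $\mathcal{U}(\cdot,1,T_\gamma)$ is closed under addition, so $u^i+u^i_\Gamma\in\mathcal{U}(\widetilde\psi,1,T_\gamma)$ with $\widetilde\psi(k)\lesssim k^{7/2}\log^{1/2}(k\diam(\Gamma)+1)$ for $k\ge k_0$.

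The second and most delicate step is to propagate this regularity to the solution: I would show that $u\in\mathcal{U}(\psi_u,\xi,T_\gamma\setminus\gamma)$ for some $\xi$ independent of $k,h,p_\gamma$, with $\psi_u(k)\lesssim k^8\log(k\diam(\Gamma)+1)$, arguing as in the proof of Lemma~\ref{lem:incGinU}. First, the data $f_{k,\eta}=(\partial/\partial\bfn-\imag\eta)(u^i+u^i_\Gamma)$ on $\gamma$ inherits analytic regularity from $u^i+u^i_\Gamma$: by the trace theorem, using the $\mathcal{U}$-bounds on $\nabla(u^i+u^i_\Gamma)$ and $\nabla^2(u^i+u^i_\Gamma)$ together with $\eta=O(k)$, one obtains $\|f_{k,\eta}\|_{L^2(\gamma)}\lesssim k^2\widetilde\psi(k)$. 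Inverting $\calA_{k,\eta}$ via the $(R_0,R_1)$ bound \eqref{ineq:AinvBd} then gives $\|\partial u/\partial\bfn\|_{L^2(\gamma)}\lesssim k^2\|f_{k,\eta}\|_{L^2(\gamma)}\lesssim k^4\widetilde\psi(k)$. Feeding this into the Green representation \eqref{Green} for the equivalent problem and bounding the single-layer potential with Lemma~\ref{lem:S_k_bd} yields $\|u\|_{L^\infty(T_\gamma\setminus\gamma)}\lesssim k^{-1/2}\log^{1/2}(k\diam(\Gamma)+1)\,k^4\widetilde\psi(k)$, and finally the interior analytic elliptic estimate \citet[Theorem~B.6]{Me:12}, applied exactly as in \eqref{eq:Hsp2}--\eqref{eq:nabnu}, bootstraps this bound to the $\mathcal{U}$-bound at the cost of one further power of $k$. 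Schematically the exponents combine as $k^{7/2}\log^{1/2}\cdot k^2\cdot k^2\cdot k^{-1/2}\log^{1/2}\cdot k=k^8\log$, which accounts for both the power $k^8$ and the full (rather than half) power of the logarithm in $C_\gamma(k)$. The main obstacle is executing this chain cleanly --- in particular checking that the hypotheses of \citet{LoMe:11} and \citet{Me:12} apply to the (possibly disconnected) analytic obstacle $\omega$, and that all intermediate constants really are independent of $k$, $h$ and $p_\gamma$.

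The final step is a direct application of the $hp$-approximation result of \citet{LoMe:11} for functions in $\mathcal{U}$ restricted to an analytic boundary: on a quasi-uniform mesh with $kh/p_\gamma\lesssim1$, the Neumann trace of the solution of the equivalent problem is approximated in $\Vhp$ with error $\lesssim\psi_u(k)\,\e^{-\tau_\gamma(k)p_\gamma}$, where $\tau_\gamma(k)$ is exactly the quantity \eqref{taUgammAhp} (the two arguments of the minimum corresponding to the asymptotic, $kh\lesssim p_\gamma$, and preasymptotic regimes of the Löhndorf--Melenk estimate). Recalling from \eqref{lil_gamma_par} that $v_\gamma=\tfrac1k\,\partial u/\partial\bfn\circ\bfx_\gamma$, the resulting factor $k^{-1}$ only improves the constant, and setting $C_\gamma(k):=Ck^8\log(k\diam(\Gamma)+1)$ gives the claimed best-approximation estimate.
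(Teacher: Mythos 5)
Your overall strategy coincides with the paper's: recast the problem as scattering of $u^i+u^i_\Gamma$ by $\resub{\omega}$ alone, use Lemma \ref{lem:incGinU} to place the enriched incident field in the analyticity class $\mathcal{U}$ of \eqref{def:squigglyU}, propagate this regularity to the solution $u$, and finish by applying the L\"ohndorf--Melenk $hp$ best-approximation lemma to $u/C_\gamma(k)$ and rescaling. Your first and last steps match the paper's proof essentially verbatim, and your exponent bookkeeping ($k^{9/2}$ between $\widetilde\psi$ and $\psi_u$) agrees numerically with the paper's factor $k^{5/2}\|\calA_{k,\eta}^{-1}\|_{L^2(\Gamma\cup\gamma)\shortrightarrow L^2(\Gamma\cup\gamma)}\lesssim k^{9/2}$.

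The gap is in your middle step. The paper does not re-derive the regularity of $u$ by hand: it sets $g_1:=-\tfrac{\imag\eta}{k}(u^i+u^i_\Gamma)$, $g_2:=u^i+u^i_\Gamma$ and invokes \citet[Lemma~2.6]{LoMe:11} together with \eqref{ineq:AinvBd} to conclude $u\in\mathcal{U}(\psi^*,1,T_\gamma\setminus\gamma)$ with $\psi^*(k)=\psi(k)\big(1+k^{5/2}\|\calA_{k,\eta}^{-1}\|\big)\lesssim k^{8}\log(k\diam(\Gamma)+1)$. That lemma is precisely the device that delivers \emph{analytic regularity up to the boundary} $\gamma$, and it is the one place where the analyticity of $\gamma$ is actually used. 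Your substitute --- bound $\|f_{k,\eta}\|_{L^2(\gamma)}$, invert $\calA_{k,\eta}$, feed the resulting density into the Green representation to get an $L^\infty$ bound on $u$, then ``bootstrap exactly as in \eqref{eq:Hsp2}--\eqref{eq:nabnu}'' --- fails at the bootstrap. The estimate \eqref{eq:Hsp2} is an \emph{interior} elliptic estimate: it controls the $H^{n+2}$ norm on $T_\gamma$ by the $L^2$ norm on a strictly larger set in which the function solves the homogeneous Helmholtz equation, which is available for $u^i_\Gamma$ only because $T_\gamma$ is bounded away from $\Gamma$, the support of the layer density defining it. The total field $u$ is not a Helmholtz solution in any neighbourhood of $\gamma$ (it is defined only on one side of $\gamma$ and carries the boundary condition there), and $T_\gamma\setminus\gamma$ abuts $\gamma$; no interior estimate can therefore produce $\|\nabla^n u\|_{L^2(T_\gamma\setminus\gamma)}\lesssim \xi^n\psi_u(k)\max\{n+1,k\}^n$ with constants uniform up to $\gamma$ --- the constants degenerate as one approaches the boundary. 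A related secondary issue is that an $L^\infty$ or $L^2$ bound on $u$ near $\gamma$ says nothing about the growth of \emph{all} derivatives with the prescribed $\max\{n+1,k\}^n$ rate, which is what membership of $\mathcal{U}$ requires. Replacing your middle step by the citation of \citet[Lemma~2.6]{LoMe:11} repairs the argument and recovers the paper's proof.
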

\begin{proof}
By Lemma \ref{lem:incGinU} we have that $u^i_\Gamma\in \mathcal{U}(\psi,1,T_\gamma\setminus\gamma)$, and it is straightforward to see that $u^i\in \mathcal{U}(1,1,T_\gamma\setminus\gamma)$. Choosing
\[
g_1 := -\frac{\imag \eta}{k}(u^i + u^i_\Gamma),\quad g_2:=u^i + u^i_\Gamma,\quad\text{in }T_\gamma\cap D,
\]
with $g_1=g_2=0$ otherwise, we have that $g_1,g_2\in  \mathcal{U}(\psi,1,T_\gamma\setminus\gamma)$. Noting again \eqref{ineq:AinvBd}, we may appeal to \citet[Lemma~2.6]{LoMe:11} to deduce that the solution $u$ of the BVP \eqref{Helmholtz}--\eqref{SRC} (which is the same as the solution to the equivalent problem of scattering by $u^i+u^i_\Gamma$), is in the space $\mathcal{U}(\psi^*,1,T_\gamma\setminus\gamma)$, for all $k\geq k_0$ given $k_0>0$, where
\[
\psi^*(k):=\psi(k)(1+k^{5/2}\|\calA_{k,\eta}^{-1}\|_{L^2(\Gamma\cup\gamma)\shortrightarrow L^2(\Gamma\cup\gamma)} )\leq C k^8\log(k\diam(\Gamma)+1), \quad\text{for }k\geq k_0.
\]
Hence, the best approximation estimate of \citet[Lemma~3.16]{LoMe:11} may be applied to 
\[u/C_\gamma(k)\in\mathcal{U}(1,1,T_\gamma\setminus\gamma), \quad\text{for }k\geq k_0,\]
(noting \citet[Definition~3.3]{LoMe:11}), yielding the best approximation result after rescaling by $C_\gamma(k)$.

\end{proof}

{
	We do not expect the above result to be sharp, however to the best knowledge of the authors, it is the only $hp$-BEM estimate currently available for such a configuration.	We now generalise Proposition \ref{co:hpBAE} in the form of an assumption, which states that we observe exponential convergence to the solution $\partial u /\partial \bfn$ in $\Vhp$. It follows immediately from Proposition \ref{co:hpBAE} that this assumption holds for analytic $\gamma$, under appropriate conditions.} For the case of polygonal $\gamma$, the numerical experiments of \S\ref{s:results} suggest the assumption also holds, provided that we fix $\DOFshp=O(k)$.

\begin{assumption}\label{as:standard_BEM}
Denoting by $v_\gamma$ the restriction to $\gamma$ of the solution of the BIE \eqref{BIE}, we assume that the sequence of approximation spaces \[\left(\Vhp\right)_{\DOFshp \in\N}\] is such that
\[
\inf_{w_{\DOFshp}\in \Vhp}\|v_\gamma-w_{\DOFshp}\|_{L^2(\gamma)}\leq {C}_\gamma(k)\e^{-\tau_\gamma(k) p_\gamma},
\]
where the positive constants $C_\gamma(k),\tau_\gamma(k)$ may depend on $k$, and $p_\gamma$ is the polynomial degree of the space $\Vhp$.
\end{assumption}

\subsection{Combined approximation space on {$\Gamma\cup\gamma$}}\label{se:CombApprox}

The approximation space is based on the representation of the Neumann trace
\begin{equation}\label{ansatz}
\pdiv{u}{\bfn} =\left\{\begin{array}{cc}
\Psi+ kv_\Gamma+k\mathcal{G}_\gtG v_\gamma,&\text{ on }\Gamma,\\
kv_\gamma,&\text{ on }\gamma,
\end{array}\right.
\end{equation}
where $v_\Gamma$ and $v_\gamma$ are the unknowns that we solve for using the approximation spaces of \S\ref{s:HNAspace} and \S\ref{s:hpBEM}, whilst $\Psi$ denotes the Physical Optics Approximation \eqref{POA} and $\mathcal{G}_\gtG$ denotes the Interaction Operator \eqref{G_def}. 
Hence the approximation lies in the space 
\begin{equation}\label{eq:Vspace}
\V:=\VHNA\times\Vhp,
\end{equation}
where the total number of degrees of freedom is $\DOFsTotal=\DOFsHNA+\DOFshp$. 
For problems of one large polygon and one (or many) small polygon(s), the single-mesh HNA space $\VHNA$ is particularly practical, as only a small modification is required to implement both this and a standard $hp$-BEM space on $\Vhp$.

The following notation will be used to describe the problem in block operator form.

\begin{definition}[Operator restriction]\label{def:Txy_bits}
For {the}{ operator $\calA_{k,\eta}:L^2(\partial D)\to L^2(\partial D)$} (of Definition \ref{def:BIE_A}) and relatively open
$X,Y\subset \partial D$, we define the operator $\mathcal{A}_{Y\shortrightarrow X}:L^2(Y)\rightarrow L^2(X)$ by
\[
\mathcal{A}_{Y\shortrightarrow X}\varphi:=\left(\mathcal{A}_{k,\eta}\circ\mathcal{Q}_{Y}\varphi\right)|_{X},
\quad {\varphi\in L^2(Y),}
\]
{where $\mathcal{Q}_Y:L^2(Y)\to L^2(\partial D)$ is the zero-extension operator}, such that $(\mathcal{Q}_{Y}\varphi)|_Y=\varphi$ and $(\mathcal{Q}_{ Y}\varphi)|_{\partial D\setminus Y}=0$. 
For the case of the identity operator $I_{X\shortrightarrow X}:L^2(X)\rightarrow L^2(X)$ we simplify the notation by writing $I_X$.
\end{definition}

Inserting \eqref{ansatz} into the BIE \eqref{BIE}, we can write the problem to solve in block form: Find $v\in L^2(\Gamma)\times L^2(\gamma)$ such that
\begin{align}\label{cts_prob}
&\mathcal{A}_\square\mathcal{G}_\square v=
\left[\begin{array}{c}
f|_\Gamma-\mathcal{A}_\GtG\Psi\\
f|_\gamma-\mathcal{A}_\Gtg\Psi
\end{array}\right],\\
&\text{where}\quad
\mathcal{A}_\square:=\left[\begin{array}{c c}
\mathcal{A}_\GtG & \mathcal{A}_\gtG
\\ \mathcal{A}_\Gtg
& \mathcal{A}_\gtg
\end{array}\right]
\quad\text{and}\quad
\mathcal{G}_\square :=k\left[\begin{array}{c c}
\mathcal{I}_\Gamma & \mathcal{G}_\gtG\\ 0
& \mathcal{I}_\gamma
\end{array}\right].
\nonumber
\end{align}

\resub{Stated in a variational form equivalent to \eqref{eq:var_form}, our problem is as follows:
Find $v\in L^2(\Gamma\cup\gamma)$ such that
\begin{align}
&\Big(\mathcal{A}_\GtG\big[\left.v\right|_\Gamma\big],w{|_\Gamma}\Big)_{L^2(\Gamma)} 
+\Big( [\mathcal{A}_\gtG+\mathcal{A}_\GtG\mathcal{G}_\gtG]\big[\left.v\right|_\gamma\big]
,w{|_\Gamma}\left[\big.v\right|_\gamma\big]\Big)_{L^2(\Gamma)}
=
\frac{1}{k}\Big(f-\mathcal{A}_\GtG\Psi,w{|_\Gamma} \Big)_{L^2(\Gamma)},\label{eq:new_var1}\\ 
&\Big(\mathcal{A}_\Gtg\big[\left.v\right|_\Gamma\big],w{|_\gamma}\Big)_{L^2(\gamma)}  
+\Big( [\mathcal{A}_\gtg
+\mathcal{A}_\Gtg\mathcal{G}_\gtG]\big[\left.v\right|_\gamma\big],w{|_\gamma}\Big)_{L^2(\gamma)} =
\frac{1}{k}\Big(f-\mathcal{A}_\Gtg\Psi,w{|_\gamma} \Big)_{L^2(\gamma)},\label{eq:new_var2}
\end{align}
for all $w\in L^2(\Gamma\cup\gamma)$. This problem forms the basis of our Galerkin method.}

\section{Galerkin method}
\label{s:Galerkin_method}
In this section, we derive error bounds for the approximation of equation \eqref{cts_prob} by the Galerkin method on the discrete space $\V$ (defined in \eqref{eq:Vspace}).
Under certain assumptions, we will show that exponential convergence is achieved. 
We intend to approximate the unknown components of the solution on $\Gamma$ and $\gamma$, that is
\begin{equation*}
v_\DOFsTotal:=\left(\begin{array}{c}
\bigSol\\
\smlSol
\end{array}\right)
\approx
\left(\begin{array}{c}
 v_\Gamma\\
v_\gamma
\end{array}\right)
=:v,
\end{equation*}
where $v$ is the solution to \eqref{cts_prob}.
Recall (from \S\ref{se:CombApprox}) that we use an HNA approximation space $\VHNA$ (single- or overlapping-mesh) on $\Gamma$, with a standard $hp$-approximation space $\Vhp$ on $\gamma$. The discrete problem to solve is: find $v_\DOFsTotal\in \V$ such that
\begin{align}
&\Big(\mathcal{A}_\GtG\bigSol ,\resub{w^N_\Gamma}\Big)_{L^2(\Gamma)} 
+\Big( [\mathcal{A}_\gtG+\mathcal{A}_\GtG\mathcal{G}_\gtG]\smlSol 
,\resub{w^N_\Gamma}\Big)_{L^2(\Gamma)}
=
\frac{1}{k}\Big(f-\mathcal{A}_\GtG\Psi,\resub{w^N_\Gamma}\Big)_{L^2(\Gamma)},
\label{Galerkin_eqns1}\\ 
&\Big(\mathcal{A}_\Gtg\bigSol,\resub{w^N_\gamma}\Big)_{L^2(\gamma)}  
+\Big( [\mathcal{A}_\gtg
+\mathcal{A}_\Gtg\mathcal{G}_\gtG]\smlSol,\resub{w^N_\gamma}\Big)_{L^2(\gamma)} 
=
\frac{1}{k}\Big(f-\mathcal{A}_\Gtg\Psi,\resub{w^N_\gamma} \Big)_{L^2(\gamma)},\label{Galerkin_eqns2}
\end{align}
for all $\resub{({w^N_\Gamma},{w^N_\gamma})}\in\V$. To implement the Galerkin method, we choose suitable bases $\Lambda_\Gamma$ and $\Lambda_\gamma$, with 
\[
\spank\Lambda_\Gamma=\VHNA \quad\text{and} \quad\spank\Lambda_\gamma=\Vhp.
\]
To determine $v_\DOFsTotal $ we seek ${\mathbf{a}}\in\C^{\DOFsTotal }$ which solves the block matrix system $B\mathbf{a}=\mathbf{b}$, where 
\begin{equation}\label{B_blocks}
B:=\overset{\varphi\in \Lambda_\Gamma\hspace{5cm}\varphi\in \Lambda_\gamma}{\left[\begin{array}{c|c}
{\left(\mathcal{A}_\GtG\varphi ,\phi\right)_{L^2(\Gamma)}} &\left( [\mathcal{A}_\gtG+\mathcal{A}_\GtG\mathcal{G}_{\gamma\rightarrow\Gamma}]\varphi ,\phi\right)_{L^2(\Gamma)}
\\ \hline
\left(\mathcal{A}_\Gtg \varphi,\phi\right)_{L^2(\gamma)}  & \left( [\mathcal{A}_\gtg+\mathcal{A}_\Gtg\mathcal{G}_\gtG]\varphi,\phi\right)_{L^2(\gamma)} 
\end{array}\right]}
\begin{array}{c}
{ }^{\phi\in\Lambda_\Gamma}\\
{ }_{\phi\in\Lambda_\gamma}\end{array}\
\end{equation}
and
\begin{equation}\label{F_blocks}
\mathbf{b}:=\frac{1}{k}\left[
\begin{array}{c}
\left(f-\mathcal{A}_\GtG\Psi,\phi \right)_{L^2(\Gamma)}
\\ \hline
\left(f-\mathcal{A}_\Gtg\Psi,\phi \right)_{L^2(\gamma)}
\end{array}
\right]
\begin{array}{c}
{ }^{\phi\in\Lambda_\Gamma}\\
{ }_{\phi\in\Lambda_\gamma}
\end{array}.
\end{equation}
For further details on implementation, see Remark \ref{re:quadrature}.

For the remainder of the section, we present approximation estimates of quantities of practical interest. 
We assume that as $\DOFsTotal$ increases, so do $\DOFsHNA$ and $\DOFshp$, such that the following convergence conditions hold:
	\begin{align}
\lim_{\DOFsTotal\shortrightarrow\infty}\quad\inf_{w_\DOFsTotal\in \V} \|w-w_\Gamma^{\DOFsTotal}\|_{L^2(0,L_\Gamma)}=0&\quad\text{for all }w\in C^\infty(0,L_\Gamma),\label{eq:Ncondish1}\\
\lim_{\DOFsTotal\shortrightarrow\infty}\quad\inf_{w_N\in \V} \|w-w_{\gamma_i}^{\DOFsTotal}\|_{L^2(0,L_{\gamma_i})}=0&\quad\text{for all }w\in C^\infty(0,L_{\gamma_i}), \label{eq:Ncondish2}
\end{align}
for $i=1,\ldots\numScats$, where $w_\DOFsTotal:=(w_\Gamma^{\DOFsTotal},w_{\gamma_1}^{\DOFsTotal},\ldots,w_{\gamma_\numScats}^{\DOFsTotal})$. If $\Vhp$ is a standard $hp$-BEM space then \eqref{eq:Ncondish2} holds. It follows by identical arguments to \citet[Theorem~5.1]{ChLa:07} and the definition of the single- and overlapping-mesh spaces (see \S\ref{s:approx_space}) that Condition \eqref{eq:Ncondish1} holds.
We present a lemma concerning the stability of the system \eqref{Galerkin_eqns1}--\eqref{Galerkin_eqns2}.

\begin{lemma}[Stability of discrete system]
\label{uniquenessProj}
Suppose the convergence conditions \eqref{eq:Ncondish1}--\eqref{eq:Ncondish2}  hold. 
Then there exist positive constants $C_q(k)$ and $\Nought(k)$ such that {for $\DOFsTotal\ge \Nought$} the solution $v_\DOFsTotal$ of \eqref{Galerkin_eqns1}--\eqref{Galerkin_eqns2} exists.
Moreover
\[
\|v-v_\DOFsTotal\|_{L^2(\partial D)}
\leq C_q(k)
\min_{w_\DOFsTotal\in \V}\|v-w_\DOFsTotal \|_{L^2(\partial D)},\quad\text{for }\DOFsTotal\geq \Nought(k).
\]
\end{lemma}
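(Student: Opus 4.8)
The statement is a classical quasi-optimality (Céa-type) result for Galerkin approximation of a well-posed operator equation, following the standard "projection method for a second-kind equation" template used, e.g., in \citet[Theorem~5.1]{ChLa:07}. The plan is to recast \eqref{Galerkin_eqns1}--\eqref{Galerkin_eqns2} as a single Galerkin equation for the operator on the left-hand side of \eqref{cts_prob} (or, equivalently, the bilinear form in \eqref{eq:new_var1}--\eqref{eq:new_var2}), and then invoke the general theory that a projection method for an invertible operator of the form "identity plus compact" converges quasi-optimally once the approximating subspaces are asymptotically dense. First I would observe that the operator $\mathcal{A}_\square\mathcal{G}_\square$ appearing in \eqref{cts_prob} inherits invertibility on $L^2(\partial D)$ from invertibility of $\calA_{k,\eta}$ (by \citet[Theorem~2.27]{ACTA}) together with invertibility of $\mathcal{G}_\square$, which is immediate since $\mathcal{G}_\square = k\left[\begin{smallmatrix}\mathcal{I}_\Gamma & \mathcal{G}_\gtG\\ 0 & \mathcal{I}_\gamma\end{smallmatrix}\right]$ is block upper-triangular with invertible diagonal, hence has the explicit bounded inverse $\tfrac1k\left[\begin{smallmatrix}\mathcal{I}_\Gamma & -\mathcal{G}_\gtG\\ 0 & \mathcal{I}_\gamma\end{smallmatrix}\right]$; here boundedness of $\mathcal{G}_\gtG$ on the relevant $L^2$ spaces is Lemma~\ref{le:G_bound}.

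Next I would put the equation into the form required by the projection-method theorem: write $\calA_{k,\eta} = \tfrac12\calI + \calK$ where $\calK := \calD_k' - \imag\eta\calS_k$ is compact on $L^2(\partial D)$ (standard; $\calS_k$ and $\calD_k'$ are compact on $L^2$ of a Lipschitz boundary in two dimensions, or one works with the identity-plus-compact structure directly). Then $\mathcal{A}_\square\mathcal{G}_\square$ is of the form $\tfrac{k}{2}\calI$ plus a compact perturbation after accounting for the triangular structure of $\mathcal{G}_\square$ — more precisely, $\mathcal{A}_\square\mathcal{G}_\square = \tfrac{k}{2}\mathcal{G}_\square + (\text{compact})$, and since $\mathcal{G}_\square$ is itself $k\calI$ plus the bounded-but-not-compact off-diagonal block $k\mathcal{G}_\gtG$, one must be a little careful. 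The cleanest route is to absorb the known off-diagonal block by noting that the variational problem \eqref{eq:new_var1}--\eqref{eq:new_var2} is set on the subspace structure $\VHNA\times\Vhp$ and to apply the abstract result of, e.g., \citet[][]{ChLa:07} (their Theorem~5.1) verbatim: the hypotheses there are precisely (a) the continuous problem is uniquely solvable, and (b) the discrete spaces satisfy the density conditions \eqref{eq:Ncondish1}--\eqref{eq:Ncondish2}. Both are now in hand — (a) from the invertibility argument above, (b) by assumption. The conclusion of that theorem is exactly: there exist $\Nought(k)$ and $C_q(k)$ such that for $\DOFsTotal\ge\Nought(k)$ the discrete solution exists, is unique, and satisfies the stated quasi-optimal bound.

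The main obstacle — really the only non-bookkeeping point — is verifying the compactness/identity-plus-compact structure in the correct function-space setting so that the standard projection-method convergence theorem genuinely applies: one needs the map to be a compact perturbation of an isomorphism \emph{between the same spaces on which the Galerkin spaces sit}, and the block structure mixing $L^2(\Gamma)$ and $L^2(\gamma)$ with the operator $\mathcal{G}_\gtG$ (which maps $L^2(\gamma)\to L^2(\Gamma)$ and is bounded but not an operator on a single fixed space) needs to be handled carefully, e.g.\ by working on $L^2(\partial D) = L^2(\Gamma)\oplus L^2(\gamma)$ throughout and checking that $\mathcal{A}_\square\mathcal{G}_\square$ differs from a fixed invertible operator on that space by a compact one. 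Once that is set up, the remainder is a direct citation of the abstract projection-method result, and the constants $C_q(k)$, $\Nought(k)$ are exactly those produced by that theorem (depending on $k$ through $\|(\mathcal{A}_\square\mathcal{G}_\square)^{-1}\|$ and the rate at which the best approximation of the relevant compact-operator images tends to zero).
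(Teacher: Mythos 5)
Your overall architecture is the same as the paper's: verify (a) well-posedness of the continuous block problem and (b) the density conditions \eqref{eq:Ncondish1}--\eqref{eq:Ncondish2}, then invoke the projection-method machinery of \citet{ChLa:07} (the paper cites Theorems~5.2 and~5.3 there, for stability of $(\mathcal{I}+\mathcal{P}_N K)^{-1}$ with $K:=\mathcal{A}_\square\mathcal{G}_\square-\mathcal{I}$ and for quasi-optimality respectively). Your explicit observation that $\mathcal{G}_\square$ is block upper-triangular with bounded inverse is a useful supplement that the paper leaves implicit.

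However, the step you correctly flag as the crux --- establishing the ``invertible plus compact'' structure --- is where your argument goes wrong, in both directions. First, $\calD_k'$ is \emph{not} compact on $L^2(\partial D)$ when $\partial D$ contains the corners of the polygon $\Gamma$; compactness of $\calD_k'$ holds on $C^1$ boundaries but fails on Lipschitz ones, and this failure at the corners is exactly why the paper does not write $\calA_{k,\eta}=\tfrac12\calI+\text{compact}$. Instead it invokes \citet[p.~620]{ChLa:07}: $\mathcal{A}_\GtG$ is a compact perturbation of an invertible operator that is Fredholm of index zero (the non-compact part being a Mellin-type operator localised at the corners), and the same holds for each diagonal block on $\gamma$. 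Your fallback phrase ``or one works with the identity-plus-compact structure directly'' does not rescue this, since that structure simply does not hold here; the correct structure is ``Fredholm-of-index-zero plus compact,'' and it is this that makes the ChLa:07 stability theorem applicable. Second, your concern that $\calG_\gtG$ is ``bounded but not compact'' is misplaced: because $\dist(\Gamma,\gamma)>0$, its kernel is bounded and (piecewise) continuous, so $\calG_\gtG$ is Hilbert--Schmidt and hence compact, as are all the off-diagonal blocks $\mathcal{A}_\gtG$, $\mathcal{A}_\Gtg$ and $\calA_{\gamma_i\shortrightarrow\gamma_\ell}$ ($i\neq\ell$). This is precisely the observation the paper uses to reduce the block operator to its diagonal modulo compact terms, so the ``careful handling'' you anticipate for the off-diagonal coupling is in fact immediate, while the genuine difficulty (the corners of $\Gamma$) is the one your proposal glosses over with an incorrect compactness claim.
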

\begin{proof}
First we show that $\mathcal{A}_\square $ is a compact perturbation of an operator which is {Fredholm of zero index.} 
We have from \citet[p.~620]{ChLa:07} that $\mathcal{A}_\GtG$ is a compact perturbation of a {Fredholm} operator (of index zero), and the same arguments can be applied to each $\mathcal{A}_{\gamma_i\shortrightarrow\gamma_i}$ for $i=1,\ldots \numScats$. As the kernels of $\mathcal{A}_{\Gtg_i}$, $\mathcal{A}_{\gamma_i\shortrightarrow\Gamma}$ and $\calA_{\gamma_i\shortrightarrow\gamma_\ell}$ for $i\neq\ell$ are continuous for $i=1,\ldots,\numScats$, these operators are also compact, hence $\mathcal{A}_{k,\eta}$ is a compact perturbation of a coercive {Fredholm of zero index} operator.

Let $\mathcal{P}_\DOFsTotal$ be {the} orthogonal projection operator from $L^2(\Gamma)\times L^2(\gamma)$ onto $\V$. 
Given the convergence condition \eqref{eq:Ncondish1}, it follows by the density of $C^\infty(0,L_\Gamma)$ in $L^2(0,L_\Gamma)$ for $j=1,\ldots,\numSides$ that we have convergence of the best approximation to any $L^2(0,L_\Gamma)$ function in $\VHNA$. Similar arguments follow for convergence on $\gamma$, by the convergence condition \eqref{eq:Ncondish2}.
Then \citet[Theorem~5.2]{ChLa:07} shows the existence of a solution to the discrete problem \eqref{Galerkin_eqns1}--\eqref{Galerkin_eqns2}{, for $N$ sufficiently large,} via a bound on
\begin{equation}\label{Cq_def}
\|(\mathcal{I}+\mathcal{P}_\DOFsTotal K)^{-1}\|_{L^2(\partial D)\shortrightarrow L^2(\partial D)}=:C_q<\infty,
\end{equation}
where 
\[
K:=\mathcal{A}_\square\mathcal{G}_\square-\mathcal{I}
\quad\text{with}\quad
\mathcal{I}:=\left[\begin{array}{c c}
\mathcal{I}_\Gamma & 0
\\ 0
& \mathcal{I}_\gamma
\end{array}\right].
\]
To show that our method converges to the true solution, we proceed as in \citet[Theorem~5.3]{ChLa:07}, noting that
\[\mathcal{P}_\DOFsTotal (\mathcal{I}+K)v=
\mathcal{P}_\DOFsTotal \left[\begin{array}{c}
f|_\Gamma-\mathcal{A}_\GtG\Psi\\
f|_\gamma-\mathcal{A}_\Gtg\Psi
\end{array}\right],\]
which we combine with \eqref{cts_prob} to obtain
\[
v_\DOFsTotal +\mathcal{P}_\DOFsTotal Kv_\DOFsTotal =\mathcal{P}_\DOFsTotal (\mathcal{I}+K)v.
\]
Rearranging and adding $v$ to both sides yields
\[
(\mathcal{I}+\mathcal{P}_\DOFsTotal K)(v-v_\DOFsTotal )=(\mathcal{I}-\mathcal{P}_\DOFsTotal )v,
\]
hence we can bound
\begin{align*}
\left\|v-v_\DOFsTotal \right\|_{L^2(\partial D)\shortrightarrow L^2(\partial D)}\leq&\left\|(\mathcal{I}+\mathcal{P}_\DOFsTotal K)^{-1}\right\|_{L^2(\partial D)\shortrightarrow L^2(\partial D)} \left\|v-\mathcal{P}_\DOFsTotal v\right\|_{L^2(\partial D)\shortrightarrow L^2(\partial D)}
\end{align*}
and the bound follows from the definition of $\mathcal{P}_\DOFsTotal $ and \eqref{Cq_def}.
\end{proof}

For our operator $\calA_{k,\eta}$, there is little that can be said about the constants $C_q(k)$ and $N_0(k)$ for the scattering configurations considered in this paper. In the appendix we introduce an alternative BIE formulation which is coercive provided that $|\gamma|$ is of the order of one wavelength. For this coercive formulation, $N_0(k)=1$ and $C_q(k)$ can be made explicit.

Recalling that we are actually approximating the (dimensionless) diffracted waves on $\Gamma$ and the (dimensionless) Neumann trace of the solution on $\gamma$, the full approximation to the Neumann trace follows by inserting $v_\DOFsTotal $ into \eqref{ansatz} and is denoted
\begin{equation}\label{anz_approx}
\nu_\DOFsTotal :=
\left\{\begin{array}{cc}
\Psi+k \bigSol+k\mathcal{G}_\gtG \smlSol,&\quad\text{on }\Gamma,\\
k\smlSol,&\quad\text{on }\gamma.
\end{array}\right.
\end{equation}
The following theorem can be used to determine the error of the full approximation.

\begin{theorem}\label{th:anz_approx_bd}
Suppose that
\begin{enumerate}[(i)]
\item the separation condition \eqref{as:c_r_is_0} holds,
\item the convergence conditions \eqref{eq:Ncondish1}--\eqref{eq:Ncondish2} hold,
\item Assumption \ref{as:standard_BEM} (exponential convergence of $\Vhp$) holds,
\item Assumption \ref{as:u_max_alg} (algebraic growth of the solution of the BVP \eqref{Helmholtz}--\eqref{SRC}) holds.
\end{enumerate}
Then we have the following bound on the error of the approximation \eqref{anz_approx} to the solution $\pdivl{u}{\bfn}$:
\begin{equation*}
\left\|\pdiv{u}{\bfn} -\nu_\DOFsTotal \right\|_{L^2(\partial D)}\leq C_q(k)k\BDdudn,
\end{equation*}
for $\DOFsTotal\geq\Nought$, where
\begin{enumerate}[(i)]
\item $\Nought$ and $C_q$ are as in Lemma \ref{uniquenessProj},
\item $C_\calG$ as in Lemma \ref{le:G_bound},
\item $C_u$ and $\beta$ are the constants from Assumption \ref{as:u_max_alg},
\item $p_\Gamma$, $J(k)$, $C_\Gamma$ and $\tau_\Gamma$ are as in Theorem \ref{co:bestApproxGamma},
\item $p_\gamma$, $C_\gamma$ and $\tau_\gamma$ are as in Assumption \ref{as:standard_BEM}.
\end{enumerate}
\end{theorem}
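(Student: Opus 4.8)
The plan is to read the error $\pdivl{u}{\bfn}-\nu_\DOFsTotal$ directly off the two representations \eqref{ansatz} and \eqref{anz_approx}, recognise it as the image under $\mathcal{G}_\square$ of the abstract Galerkin error $v-v_\DOFsTotal$, and then chain together the quasi-optimality of Lemma~\ref{uniquenessProj}, the bound on $\|\mathcal{G}_\gtG\|$ from Lemma~\ref{le:G_bound}, and the best-approximation estimates of Theorem~\ref{co:bestApproxGamma} (on $\Gamma$) and Assumption~\ref{as:standard_BEM} (on $\gamma$), converting $\MSM$ into $C_uk^\beta$ via Assumption~\ref{as:u_max_alg} along the way.

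First I would subtract \eqref{anz_approx} from \eqref{ansatz}. The Physical Optics term $\Psi$ is common to both and cancels, so, writing $e_\Gamma:=v_\Gamma-\bigSol$ and $e_\gamma:=v_\gamma-\smlSol$ (so that $v-v_\DOFsTotal=(e_\Gamma,e_\gamma)$),
\[
\pdiv{u}{\bfn}-\nu_\DOFsTotal=
\begin{cases}
k\,e_\Gamma+k\,\mathcal{G}_\gtG e_\gamma, & \text{on }\Gamma,\\
k\,e_\gamma, & \text{on }\gamma,
\end{cases}
\]
that is, $\pdivl{u}{\bfn}-\nu_\DOFsTotal=\mathcal{G}_\square(v-v_\DOFsTotal)$ with $\mathcal{G}_\square$ the operator of \eqref{cts_prob}. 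Since $\|(a,b)\|_{L^2(\partial D)}\le\|a\|_{L^2(\Gamma)}+\|b\|_{L^2(\gamma)}$ and $\|\mathcal{G}_\gtG\|_{L^2(\gamma)\shortrightarrow L^2(\Gamma)}\le C_\calG(k)$ by Lemma~\ref{le:G_bound} (valid for $k\ge k_0$, as $\Gamma$ and $\gamma$ are disjoint), the triangle inequality yields
\[
\left\|\pdiv{u}{\bfn}-\nu_\DOFsTotal\right\|_{L^2(\partial D)}\le k\|e_\Gamma\|_{L^2(\Gamma)}+k\left[1+C_\calG(k)\right]\|e_\gamma\|_{L^2(\gamma)}.
\]
This already displays the weight $[1+C_\calG(k)]$ on the $\gamma$-contribution, which is precisely where that factor in the statement comes from: the interaction operator $\mathcal{G}_\gtG$ re-enters the reconstruction of the Neumann trace on $\Gamma$.

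Next, both $\|e_\Gamma\|_{L^2(\Gamma)}$ and $\|e_\gamma\|_{L^2(\gamma)}$ are dominated by $\|v-v_\DOFsTotal\|_{L^2(\partial D)}$, which by Lemma~\ref{uniquenessProj} is at most $C_q(k)\min_{w_\DOFsTotal\in\V}\|v-w_\DOFsTotal\|_{L^2(\partial D)}$ for $\DOFsTotal\ge\Nought$ (using the convergence conditions \eqref{eq:Ncondish1}--\eqref{eq:Ncondish2}, hypothesis (ii)). The product structure $\V=\VHNA\times\Vhp$ is essential here: it lets the best approximation decouple into the two independent infima, so $\min_{w_\DOFsTotal\in\V}\|v-w_\DOFsTotal\|_{L^2(\partial D)}\le\inf_{\VHNA}\|v_\Gamma-\cdot\|_{L^2(\Gamma)}+\inf_{\Vhp}\|v_\gamma-\cdot\|_{L^2(\gamma)}$. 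For the first infimum I would apply Theorem~\ref{co:bestApproxGamma}, whose separation hypothesis \eqref{as:c_r_is_0} is assumed in (i), then replace $\MSM$ by $C_uk^\beta$ using Assumption~\ref{as:u_max_alg} (hypothesis (iv)), getting a bound proportional to $C_uC_\Gamma k^{\beta-1/2}J(k)\e^{-p_\Gamma\tau_\Gamma}$; for the second I would invoke Assumption~\ref{as:standard_BEM} (hypothesis (iii)), giving a bound proportional to $C_\gamma(k)\e^{-\tau_\gamma(k)p_\gamma}$. Inserting these into the inequality above and collecting terms produces the asserted bound $C_q(k)\,k\,\BDdudn$, with the constants identified as listed in (i)--(v).

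The hard part will be the bookkeeping of constants in this last step. Quasi-optimality on the coupled system controls $\|e_\Gamma\|$ and $\|e_\gamma\|$ only through the \emph{joint} best-approximation error $\inf_\Gamma+\inf_\gamma$, so a literal chaining of the inequalities attaches the weight $[1+C_\calG(k)]$ (together with a cross term) to \emph{both} contributions rather than to the $\gamma$-term alone; to land exactly on $C_q(k)\,k\,\BDdudn$ one must absorb these $O(1+C_\calG(k))$ factors into $C_q(k)$ (harmless, as $C_q(k)$ is not itself explicit here) or else sharpen the estimate of $\|e_\Gamma\|_{L^2(\Gamma)}$ so it only sees $\inf_{\VHNA}\|v_\Gamma-\cdot\|$. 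Everything else — the cancellation of $\Psi$, the splitting of the $L^2(\partial D)$ norm, and the invocations of Lemma~\ref{uniquenessProj}, Lemma~\ref{le:G_bound}, Theorem~\ref{co:bestApproxGamma}, Assumption~\ref{as:u_max_alg} and Assumption~\ref{as:standard_BEM} — is routine.
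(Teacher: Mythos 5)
Your argument is essentially the paper's: subtract the two representations so that $\Psi$ cancels, bound the $\Gamma$-contribution of the interaction term by $C_\calG(k)$ via Lemma \ref{le:G_bound}, invoke the quasi-optimality of Lemma \ref{uniquenessProj}, and conclude with Theorem \ref{co:bestApproxGamma}, Assumption \ref{as:standard_BEM} and Assumption \ref{as:u_max_alg}. The bookkeeping subtlety you flag at the end is genuine but is also present, silently, in the paper's own proof, which applies Lemma \ref{uniquenessProj} as though it delivered componentwise quasi-optimality; as you observe, the resulting $O(1+C_\calG(k))$ discrepancy on the $\Gamma$-term is harmlessly absorbed since $C_q(k)$ is not made explicit in this setting.
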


\begin{proof}
First we focus on the best approximation of $\pdivl{u}{\bfn}$ by an element $w=(w_\Gamma,w_\gamma)$ of $\V$. 
By the definition \eqref{ansatz} we have
\begin{align*}\nonumber
&\inf_{w\in\V}{\bigg(
\bigg\| \pndiv{u}-\big(\Psi+k w|_\Gamma+k\calG_\gtG w|_\gamma\big) \bigg\|_{L^2(\Gamma)}
+\bigg\| \pndiv{u}- kw|_\gamma \bigg\|_{L^2(\gamma)}
\bigg)}\\
&=
k\inf_{w\in\V}\left( 
\left\|[v_\Gamma-w|_\Gamma] + \calG_\gtG[v_\gamma-w|_\gamma]\right\|^2_{L^2(\Gamma)} 
+ \left\|v_\gamma-w|_\gamma\right\|^2_{L^2(\gamma)}\right)^{1/2}\nonumber\\
&\leq
k\inf_{w\in\V}\left( \left\|v_\Gamma-w|_\Gamma\right\|_{L^2(\Gamma)}  
+\left[1+\left\|\calG_\gtG\right\|_{L^2(\gamma)\shortrightarrow L^2(\Gamma)}\right]
\left\|v_\gamma-w|_\gamma\right\|_{L^2(\gamma)}\right).
\end{align*}
Applying Lemma \ref{uniquenessProj} {and recalling the definition \eqref{eq:Vspace} of $\V$}, we can write
\begin{align*}
&\left\|\pdivl{u}{\bfn}-\nu_\DOFsTotal\right\|_{L^2(\partial D)}\nonumber\leq C_q(k)\\
& \times k\left(\inf_{w_\Gamma\in\VHNA}\|v_\Gamma-w_\Gamma\|_{L^2(\Gamma)}  +\inf_{w_\gamma\in\Vhp}\left[1+\|\calG_\gtG\|_{L^2(\gamma)\shortrightarrow L^2(\Gamma)}\right]\|v_\gamma-w_\gamma\|_{L^2(\gamma)}\right).
\end{align*}
The assertion follows by combining this inequality with Lemma \ref{le:G_bound}, Assumption \ref{as:u_max_alg}, Theorem \ref{co:bestApproxGamma} and Assumption \ref{as:standard_BEM}.
\end{proof}

For a fixed frequency, Theorem \ref{th:anz_approx_bd} suggests that the proposed method is well suited to problems for which $\resub{\Omega}$ is a convex polygon, and $\resub{\omega}$ has a size parameter much smaller than $\resub{\Omega}$. This is because the number of DOFs required to maintain accuracy in the approximation space on $\Gamma$ grows only logarithmically with $k$.  The method will hence be particularly effective if $\resub{\omega}$ has a size parameter of the order of one wavelength, since in this case the oscillations on $\gamma$ are resolved whilst $\DOFsHNA$ does not need to be large to account for high frequencies due to the (almost) frequency independence of the approximation on~$\Gamma$.

{
\begin{remark}[Dependencies of parameters of Theorem \ref{th:anz_approx_bd}]\label{re:params}
In the following situations the bounding constants of Theorem \ref{th:anz_approx_bd} can be made either fully explicit, or $k$-explicit. 
\begin{enumerate}[(i)] 
\item The terms $C_\Gamma$, $\tau_\Gamma$ and $J(k)$ are fully explicit given $k$, the geometry of $\resub{\Omega}$ and the parameters of $\VHNA$. This follows from the separation condition \eqref{as:c_r_is_0}.
\item In the appendix we present an alternative boundary integral equation which is coercive, under certain geometric restrictions. In such a case $C_q(k)$ is known and $N_0(k)=1$.
 \item By Theorem \ref{Mu_bdMS}, if $\resub{\Omega}\cup\resub{\omega}$ is a non-trapping polygon (in the sense of Definition~\ref{def:nontrapping}), then we can choose $\beta=1/2+\epsilon$ for any $\epsilon>0$.
 \item If $\Upsilon$ is an $(R_0,R_1)$ configuration, then by Theorem \ref{Mu_bdMS} we obtain $\beta=5/2+\epsilon$ for $\epsilon>0$. Furthermore, if $\gamma$ is also analytic and $\Vhp$ satisfies the conditions of Proposition \ref{co:hpBAE} we have {$C_\gamma(k)=C k^{8}\log({k\diam(\Gamma)+1})$}, and $\tau_\gamma$ is given by \eqref{taUgammAhp}.
 \end{enumerate}
 \end{remark}
 }

An approximation $u_\DOFsTotal $ to the solution $u$ of the BVP \eqref{Helmholtz}--\eqref{BC} in $D$ is obtained by combining $\nu_\DOFsTotal $ with the representation formula \eqref{Green},
\begin{align}
u_\DOFsTotal (\bfx):=\uinc (\bfx)&-\int_0^{L_\Gamma}\Phi\left(\bfx,\bfy_\Gamma(s)\right)
\left(\Psi\left(\bfy_\Gamma(s)\right)+k\bigSol(s)+k[\mathcal{G}_\gtG\smlSol](s)\right)\dd{s}
\nonumber\\
&-k\int_0^{L_\gamma}\Phi\left(\bfx,\bfy_\gamma(s)\right)\smlSol(s)\dd{s},
\hspace{10mm}\text{for }\bfx\in D.
\label{u_M}
\end{align}
Here the parametrisation $\bfy_\Gamma$ is {as in \eqref{Gamma_param} and  $\bfy_\gamma$ as in \S\ref{s:hpBEM}.}
Expanding further, we can extend the definition of $\mathcal{G}_\gtG$ to a parametrised form by
\begin{equation*}
{\big(}\mathcal{G}_\gtG\smlSol{\big)}(s):=
\int_0^{L_\gamma}{\chi_\gamma(s,t)}
\pdiv{\Phi_k(\bfy_\Gamma(s),\bfy_\gamma(t))}{\bfn(\bfy_\Gamma(s))}\smlSol(t)\dd{t},
\quad{s\in[0,L_\Gamma],}
\end{equation*}
where the indicator function
\[
\chi_\gamma(s,t):=
\left\{\begin{array}{cc}
1,& \bfy_\Gamma(s)\in\Gamma_j \text{ and }\bfy_\gamma(t)\in U_j,\\
0,& \text{otherwise},
\end{array}\right.
\]
is used to ensure the path of integration remains inside the relative upper half-plane $U_j$.

\begin{corollary}\label{cor:domainErr}
{Assume conditions \emph{(i)--(iv)} of Theorem \ref{th:anz_approx_bd} hold.
Then given $k_0~>~0$, the HNA-BEM approximation to the BVP \eqref{Helmholtz}--\eqref{SRC} satisfies the error bound
\begin{align*}
\|u-u_\DOFsTotal \|_{L^\infty(D)}
\lesssim&
C_q(k)k^{1/2}\log^{-1/2}(1+k\diam(\partial D))\\
&\times\BDdudn,
\end{align*}
for $N\ge N_0$ and $k\geq k_0$. The terms in the bound are as in Theorem \ref{th:anz_approx_bd}.

}\end{corollary}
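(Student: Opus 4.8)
The plan is to obtain the estimate directly as a composition of three results already in place: the Green's representation formula \eqref{Green}, the single-layer mapping bound of Lemma~\ref{lem:S_k_bd}, and the Neumann-trace error estimate of Theorem~\ref{th:anz_approx_bd}. The first step is to observe that the domain approximation \eqref{u_M} is nothing other than $u_\DOFsTotal = u^i - S_k\nu_\DOFsTotal$ in $D$, with $\nu_\DOFsTotal$ the approximate Neumann trace of \eqref{anz_approx}: parametrising $\Gamma$ by $\bfy_\Gamma$ and $\gamma$ by $\bfy_\gamma$, the two boundary integrals in \eqref{u_M} — over $[0,L_\Gamma]$ with density $\Psi + k\bigSol + k\mathcal{G}_\gtG\smlSol$, and over $[0,L_\gamma]$ with density $k\smlSol$ — reassemble into the single integral $\int_{\partial D}\Phi_k(\bfx,\cdot)\,\nu_\DOFsTotal\,\mathrm{d}s$, since $\nu_\DOFsTotal$ equals these two densities on $\Gamma$ and on $\gamma$ respectively.

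Subtracting this identity from \eqref{Green} gives
\[
u - u_\DOFsTotal = -\,S_k\Big(\pdiv{u}{\bfn} - \nu_\DOFsTotal\Big)\qquad\text{in }D,
\]
where $\pdiv{u}{\bfn} - \nu_\DOFsTotal \in L^2(\partial D)$: indeed $\pdiv{u}{\bfn}\in L^2(\partial D)$ and $\Psi\in L^2(\partial D)$, the components $\bigSol,\smlSol$ lie in the finite-dimensional spaces $\VHNA,\Vhp$, and $\mathcal{G}_\gtG\smlSol\in L^2(\Gamma)$ by the mapping property established in Lemma~\ref{le:G_bound}. Taking the $L^\infty(D)$ norm and using the operator bound then yields
\[
\|u - u_\DOFsTotal\|_{L^\infty(D)} \le \|S_k\|_{L^2(\partial D)\shortrightarrow L^\infty(D)}\,\Big\|\pdiv{u}{\bfn} - \nu_\DOFsTotal\Big\|_{L^2(\partial D)}.
\]

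It then remains to insert the two ingredients. Lemma~\ref{lem:S_k_bd} bounds $\|S_k\|_{L^2(\partial D)\shortrightarrow L^\infty(D)}$ by a constant multiple of $k^{-1/2}\log^{1/2}(1+k\diam(\partial D))$ for $k\ge k_0$, and Theorem~\ref{th:anz_approx_bd} — whose hypotheses (i)--(iv) are exactly those assumed in the corollary, and which is valid for $\DOFsTotal\ge\Nought$ — bounds $\|\pdiv{u}{\bfn} - \nu_\DOFsTotal\|_{L^2(\partial D)}$ by $C_q(k)\,k\,\BDdudn$. Multiplying the two bounds and collecting the resulting power of $k$ (namely $k\cdot k^{-1/2}=k^{1/2}$) together with the logarithmic factor produces the stated estimate, with all remaining constants inherited from Theorem~\ref{th:anz_approx_bd}, and thence from Lemma~\ref{le:G_bound}, Assumption~\ref{as:u_max_alg}, Theorem~\ref{co:bestApproxGamma} and Assumption~\ref{as:standard_BEM}.

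There is no substantial obstacle: the corollary is essentially bookkeeping combining three prior estimates. The only point needing a little care is the opening identification of \eqref{u_M} with $u^i - S_k\nu_\DOFsTotal$ — in particular, checking that the indicator $\chi_\gamma$ built into the parametrised form of the interaction operator correctly reproduces the restriction $\gamma\cap U_j$ of \eqref{G_def}, so that the boundary integral in \eqref{u_M} genuinely is the single-layer potential of the single density $\nu_\DOFsTotal$ over all of $\partial D=\Gamma\cup\gamma$; once this is verified, the remaining steps are a direct substitution.
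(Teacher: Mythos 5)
Your proof is correct and takes essentially the same route as the paper's, which likewise writes $u-u_\DOFsTotal=-S_k\big(\pdivl{u}{\bfn}-\nu_\DOFsTotal\big)$ and multiplies the operator bound of Lemma~\ref{lem:S_k_bd} by the trace estimate of Theorem~\ref{th:anz_approx_bd}; your extra care over identifying \eqref{u_M} with $u^i-S_k\nu_\DOFsTotal$ is just a fuller spelling-out of the paper's one-line argument. Note only that this derivation yields the factor $\log^{1/2}(1+k\diam(\partial D))$, so the exponent $-1/2$ printed in the corollary's statement appears to be a typographical slip rather than a defect of your argument.
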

\begin{proof}
The result follows from the representation \eqref{def:Sk}, the bounds on $\|S_k\|_{L^2(\partial D)\shortrightarrow L^\infty(D)}$ given in {Lemma~\ref{lem:S_k_bd}},
Theorem \ref{th:anz_approx_bd}, and
\[
{\|u-u_\DOFsTotal \|_{L^\infty(D)}}=\left\|S_k\left(\pdiv{u}{\bfn} -\nu_\DOFsTotal \right)\right\|_{L^\infty(D)}\leq\|S_k\|_{L^2(\partial D)\shortrightarrow L^\infty(D)}\left\|\pdiv{u}{\bfn} -\nu_\DOFsTotal \right\|_{L^2(\partial D)}.
\]
\end{proof}

A quantity of practical interest is the \emph{far-field pattern} of the scattered field $u^s$, which describes the distribution of energy of the scattered field $u^s$ (of a solution to the BVP \eqref{Helmholtz}--\eqref{SRC}) far away from $\resub{\Omega}\cup\resub{\omega}$. 
We can represent the asymptotic behaviour of the scattered field (as in \citet[\S6]{HeLaMe:13_}) by
\[
u^s(\bfx)\sim u^\infty(\theta)\frac{\e^{\imag(kr +\pi/4)}}{2\sqrt{2\pi k r}},\quad\text{for }\bfx = r(\cos\theta,\sin\theta),\quad\text{as }r\rightarrow\infty,
\]
where the term $u^\infty(\theta)$ denotes the \emph{far-field pattern} at observation angle $\theta\in[0,2\pi)$, which we can represent via the solution to the BIE \eqref{BIE}:
\begin{equation}\label{def:FarField}
u^\infty(\theta):=-\int_{\partial D}\e^{-\imag k[y_1\cos\theta + y_2\sin\theta ]}\pdiv{u}{\bfn}(\bfy)\dd{s}(\bfy),\quad\theta\in[0,2\pi), \quad\bfy=(y_1,y_2).
\end{equation} 
We may define an approximation $u^\infty_{\DOFsTotal}$ to the far-field pattern $u^\infty$ by inserting $\nu_\DOFsTotal$ into \eqref{def:FarField} {in place of $\pdivl{u}\bfn$}.

\newpage
\begin{corollary}\label{cor:FF_conv}
{Under the assumption of Theorem \ref{th:anz_approx_bd}, the far-field pattern $u^\infty_{\DOFsTotal}$ computed from the HNA-BEM solution approximates $u^\infty$ with the error bound
\begin{align*}
&\|u^\infty-u^\infty_{\DOFsTotal} \|_{L^\infty(0,2\pi)}\\
&\leq  C_q(k) k\sqrt{L_\Gamma+L_\gamma}\BDdudn.
\end{align*}
}
{The terms in the bound are as in Theorem \ref{th:anz_approx_bd}.}
\end{corollary}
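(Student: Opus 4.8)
The plan is to bound the far-field error pointwise in $\theta$ by the $L^2(\partial D)$ error in the Neumann trace, exactly as in the proof of Corollary~\ref{cor:domainErr}, and then invoke Theorem~\ref{th:anz_approx_bd}. First I would note that by the linearity of the far-field representation \eqref{def:FarField} in $\pdivl{u}{\bfn}$, the difference $u^\infty(\theta)-u^\infty_{\DOFsTotal}(\theta)$ is given by the same integral with $\pdivl{u}{\bfn}$ replaced by $\pdivl{u}{\bfn}-\nu_\DOFsTotal$. Then for each fixed $\theta\in[0,2\pi)$, apply the Cauchy--Schwarz inequality to the integral over $\partial D$: since $|\e^{-\imag k[y_1\cos\theta+y_2\sin\theta]}|=1$, the $L^2(\partial D)$ norm of the exponential kernel is simply $|\partial D|^{1/2}=\sqrt{L_\Gamma+L_\gamma}$, giving
\[
|u^\infty(\theta)-u^\infty_{\DOFsTotal}(\theta)|\le \sqrt{L_\Gamma+L_\gamma}\,\left\|\pdiv{u}{\bfn}-\nu_\DOFsTotal\right\|_{L^2(\partial D)}.
\]
Taking the supremum over $\theta$ on the left is then immediate because the right-hand side is independent of $\theta$.

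The final step is to substitute the bound of Theorem~\ref{th:anz_approx_bd} for $\|\pdivl{u}{\bfn}-\nu_\DOFsTotal\|_{L^2(\partial D)}$, which is exactly $C_q(k)k\BDdudn$; multiplying by $\sqrt{L_\Gamma+L_\gamma}$ reproduces the claimed estimate. One should check that the hypotheses line up: Corollary~\ref{cor:FF_conv} is stated under "the assumption of Theorem~\ref{th:anz_approx_bd}", i.e.\ conditions (i)--(iv) there (separation condition, convergence conditions, Assumptions~\ref{as:standard_BEM} and~\ref{as:u_max_alg}) and $\DOFsTotal\ge\Nought$, so no extra work is needed to set up the framework.

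There is essentially no obstacle here — the proof is a two-line argument, mirroring Corollary~\ref{cor:domainErr} with the single-layer mapping bound replaced by the trivial bound $\||\e^{-\imag k\,\hat\theta\cdot\bfy}|\|_{L^2(\partial D)}=|\partial D|^{1/2}$. The only point requiring a sentence of care is the observation that the unimodularity of the far-field kernel makes its $L^2(\partial D)$ norm equal to $\sqrt{L_\Gamma+L_\gamma}$ uniformly in $\theta$, which is what lets the $L^\infty(0,2\pi)$ norm come out with no $k$-dependent penalty beyond what is already in Theorem~\ref{th:anz_approx_bd}. I would therefore keep the proof to three short sentences: linearity and substitution of $\nu_\DOFsTotal$, Cauchy--Schwarz with the unimodular kernel, and insertion of Theorem~\ref{th:anz_approx_bd}.
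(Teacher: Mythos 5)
Your proposal is correct and follows exactly the paper's own argument: take the difference inside the far-field integral \eqref{def:FarField}, use $|\e^{-\imag k[y_1\cos\theta+y_2\sin\theta]}|=1$ together with the Cauchy--Schwarz inequality to get the factor $(L_\Gamma+L_\gamma)^{1/2}$ times $\|\pdivl{u}{\bfn}-\nu_\DOFsTotal\|_{L^2(\partial D)}$, and then insert the bound of Theorem~\ref{th:anz_approx_bd}. No gaps.
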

{
 \begin{proof}
We have
 \begin{equation*}
 |u^\infty(\theta)-u^\infty_{\DOFsTotal} (\theta)|
 \leq\int_{\partial D}\left|\pdiv{u}{\bfn} -\nu_\DOFsTotal \right|\dd{s}
 \leq(L_\Gamma+L_\gamma)^{1/2}\left\|\pdiv{u}{\bfn}  -\nu_\DOFsTotal \right\|_{L^2(\partial D)}
 \end{equation*}
 and the result follows by Theorem \ref{th:anz_approx_bd}.
 \end{proof}
}

\section{Numerical results}\label{s:results}

Here we present numerical results for the solution of the discrete problem \eqref{Galerkin_eqns1}--\eqref{Galerkin_eqns2}.
\resub{Experiments were run over a range wavenumbers $k\in\{20,40,80,160\}$, incident angles $\bfd$ and maximal polynomial degrees $p\in\{1,\ldots,8\}$, for three scattering configurations, which we shall refer to as Experiments One, Two and Three. Each configuration consists of an equilateral triangle $\Omega$ with perimeter $L_\Gamma=6\pi$, and some small scatterer(s) $\omega$. In Experiment One (\S\ref{sec:exp1}), $\omega$ consists of a single small triangular scatterer with perimeter $L_\gamma=3\pi/5$, with the obstacles separated by a fixed distance of $\dist(\Gamma,\gamma)=\sqrt3\pi/5$, as in Figure \ref{fig:exp1}\protect\subref{fig:exp1D}. In Experiment Two (\S\ref{sec:exp2}) we reduce the distance between the obstacles in proportion to the problem wavelength. In Experiment Three (\S\ref{sec:exp3}), $\omega$ consists of two disjoint triangular scatterers.}

In terms of observed error, each value of $\bfd$ tested gave very similar results, hence we focus here on the case $\bfd=(1,1)/\sqrt{2}$, which
allows some re-reflections between the obstacles and partial illumination of $\Gamma$, see Figures \ref{fig:exp1}\protect\subref{fig:exp1D}, \ref{fig:exp2}\subref{fig:exp3D} and \ref{fig:exp3}\subref{fig:exp3D}.

\resub{We now describe the approximation parameters common to all three experiments.} To construct the approximation space {$\V$}, we first choose $\VHNA$ to be the single-mesh approximation space of \S\ref{s:approx_space} with $p_j=p$ for each side $j=1,\ldots,\numSides =3$, reducing the polynomial degree close to the corners of $\Gamma$ in accordance with Remark~\ref{deg_vec}, hence $p$ now refers to the polynomial degree on the largest mesh elements. We also remove basis elements close to the corners of the mesh on $\Gamma$ in accordance with Remark \ref{why_remove}, choosing $\alpha_j=\max\{(1+p)/4,2\}$, to improve conditioning of the discrete system \eqref{B_blocks}. A grading parameter of $\sigma=0.15$ is used (as in \citet{HeLaMe:13}, where the rationale for this choice is discussed), with $n_j=2p$ layers on each graded mesh, for $j=1,{2,3}$ (hence we may choose the constant from Theorem \ref{co:bestApproxGamma} as $c_j=2$ ).

Theorem \ref{co:bestApproxGamma} ensures that we will observe exponential convergence on $\Gamma$ if the polynomial degree is consistent across the mesh, and Proposition \ref{co:hpBAE} ensures that we observe exponential convergence on $\gamma$\resub{,} if $\gamma$ is analytic. In these numerical experiments we test problems where these two conditions are not met, and encouragingly still observe exponential convergence. As hypothesised by Remark \ref{deg_vec} and Assumption \ref{as:standard_BEM}, our experiments suggest that our method converges exponentially under conditions much broader than those guaranteed by our theory.

For the standard $hp$-BEM space $\Vhp$, we use the same parameters \resub{$p_\gamma=p$}, $\sigma$ and $c_j$ to grade towards the corners of $\gamma$, so the construction of the mesh on $\gamma$ is much the same as on $\Gamma$.
The key difference is that on $\gamma$ every mesh element is sufficiently subdivided to resolve the oscillations. The polynomial degree $p_j$ is decreased on smaller elements, as on $\Gamma$, in accordance with Remark \ref{why_remove}.

\resub{Figures \ref{fig:exp1}\protect\subref{fig:exp1L2}, \ref{fig:exp2}\protect\subref{fig:exp2L2} and \ref{fig:exp3}\protect\subref{fig:exp3L2} show $L^2$ convergence on the boundary $\partial D=\Gamma\cup\gamma$, as $p$ increases, for different values of~$k$.
The markers
correspond to the increasing polynomial degree $p=1,\ldots,7$ and the horizontal axis represents the total number of DOFs $\DOFsTotal$, which depends on both $p$ and $k$.
The reference solution, denoted $\nu_{\DOFsTotal^*}$, is computed with $p=8$.
	Additional checks were performed against a high order standard BEM approximation to validate the reference solution.
	In each experiment that follows, the increased number of oscillations appears to be handled by the increase in $\DOFshp $ for each $k$ (here $\DOFsHNA $ remains roughly fixed as $k$ increases, and $\DOFshp$ increases less than linearly with $k$) with exponential convergence in $p$ observed in each case, as predicted by Theorem~\ref{th:anz_approx_bd} (for analytic~$\gamma$). Given exponential convergence in $L^2(\partial D$), corollaries \ref{cor:domainErr} and \ref{cor:FF_conv} are sufficient to guarantee exponential point-wise convergence of the domain approximation \eqref{u_M} and the far-field approximation \eqref{def:FarField}.}
	
\begin{remark}[Quadrature]\label{re:quadrature}
The integrals in \eqref{B_blocks} and \eqref{F_blocks} and the $L^2$ norms used to estimate the error in \resub{Figures \ref{fig:exp1}\protect\subref{fig:exp1L2}, \ref{fig:exp2}\protect\subref{fig:exp2L2} and \ref{fig:exp3}\protect\subref{fig:exp3L2}} may be oscillatory and singular.
In particular, care must be taken when evaluating the triple integral $\left(\mathcal{A}_\Gtg\mathcal{G}_\gtG v,w \right)_{L^2(\gamma)}$, which contains a singular oscillatory integrand on elements for which $\calG_{\gamma\shortrightarrow\Gamma}v$ is supported. 
Standard composite quadrature routines require a large number of weights and nodes. 
Hence, at higher frequencies, oscillatory quadrature rules should be used (see \citet{DeHuIs:18} for a review of such methods), while singular integrals should be computed using a suitable quadrature rule (e.g.\ \citet{HuCo:09}).
\end{remark}

\resub{\subsection{Experiment one}\label{sec:exp1}}

\begin{figure}
	\centering
	\subfloat[][]{\psfrag{[x]}[t]{$s$}
		\psfrag{[x]}[c]{Total number of DOFs $\DOFsTotal$, for $p=1,\ldots,7$}\psfrag{[t]}[Bc]{}\psfrag{[y]}[Bc]{\vspace{-20mm}$\frac{\| \nu_{\DOFsTotal^*}-\nu_{\DOFsTotal}\|_{L^2({\partial D})}}{\|\nu_{\DOFsTotal^*}\|_{L^2({\partial D})}}$}{\includegraphics[width=.475\linewidth,trim={2cm 0cm 2cm 0cm},clip]{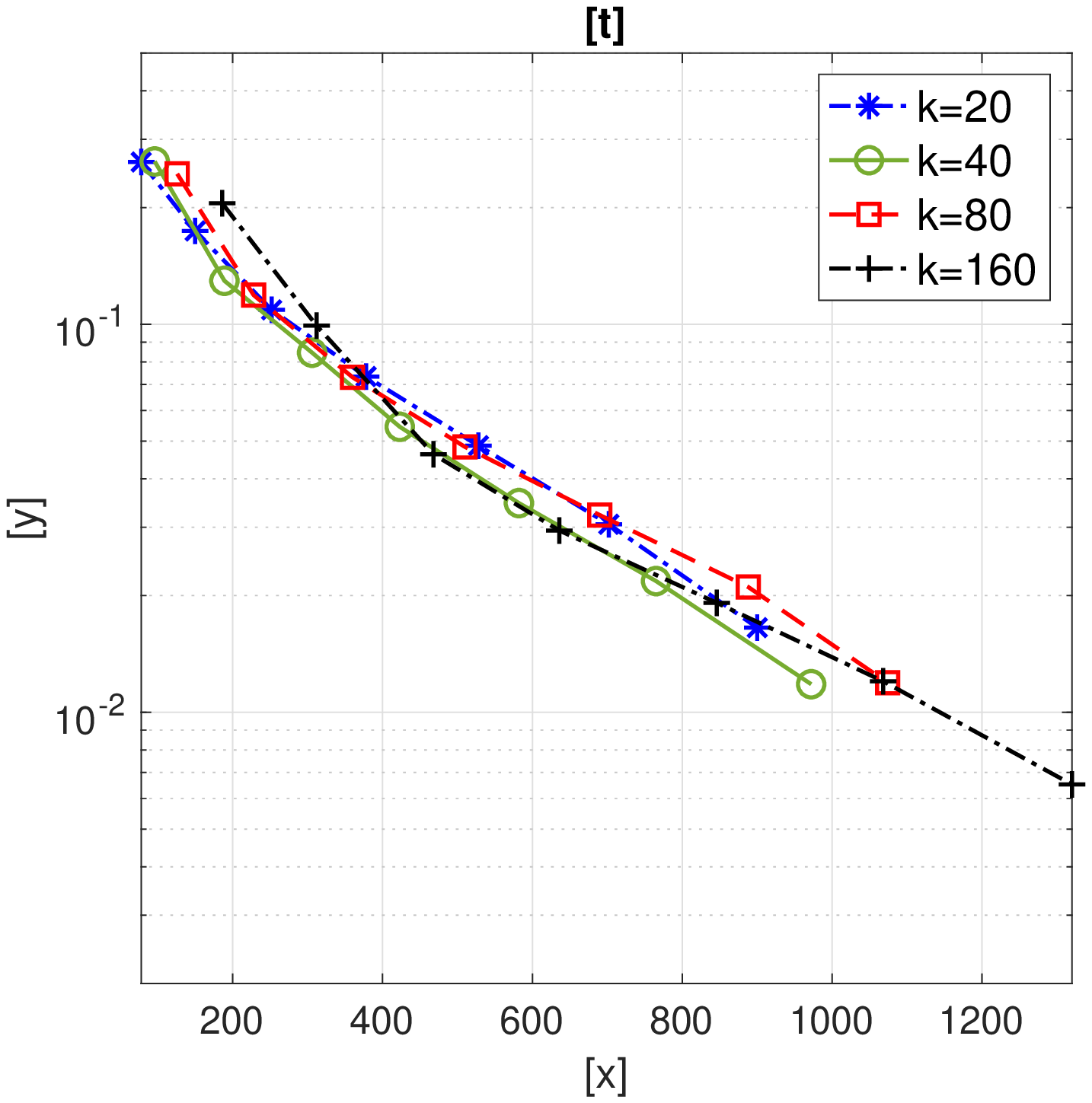}}\label{fig:exp1L2}}
	\subfloat[][]{\includegraphics[width=.525\linewidth, trim={2cm 1.2cm 1cm 1cm},clip]{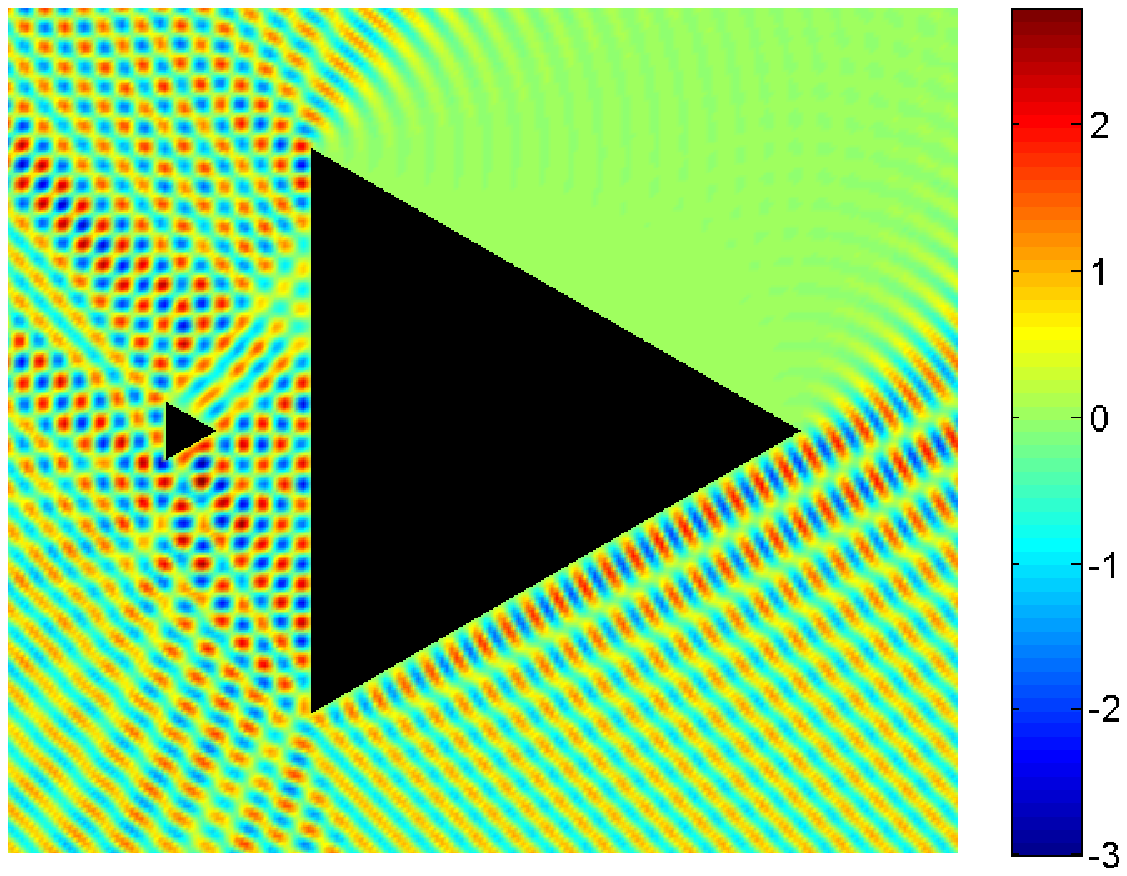}\label{fig:exp1D}}
	\caption{\resub{\protect\subref{fig:exp1L2} Convergence in $L^2(\partial D)$ and \protect\subref{fig:exp1D} the real component of the domain approximation for Experiment One \S\ref{sec:exp1} with {$k=20$}, $L_\Gamma=6\pi$, $L_\gamma=3\pi/5$, $k=20$, $\bfd=(1,1)/\sqrt2$, $N=1122$}.}\label{fig:exp1}
\end{figure}
\begin{figure}
	\centering
	\begin{tikzpicture}
	\node[inner sep=0pt] (plot1) at (0,0)
	{\subfloat[a][]{\psfrag{[x]}[t]{$s$}
	\psfrag{[y]}[Bcb]{$\re\{\nu_{\DOFsTotal}(\bfx_\Gamma(s))\}$}\psfrag{[t]}{}\includegraphics[width=.5\linewidth]{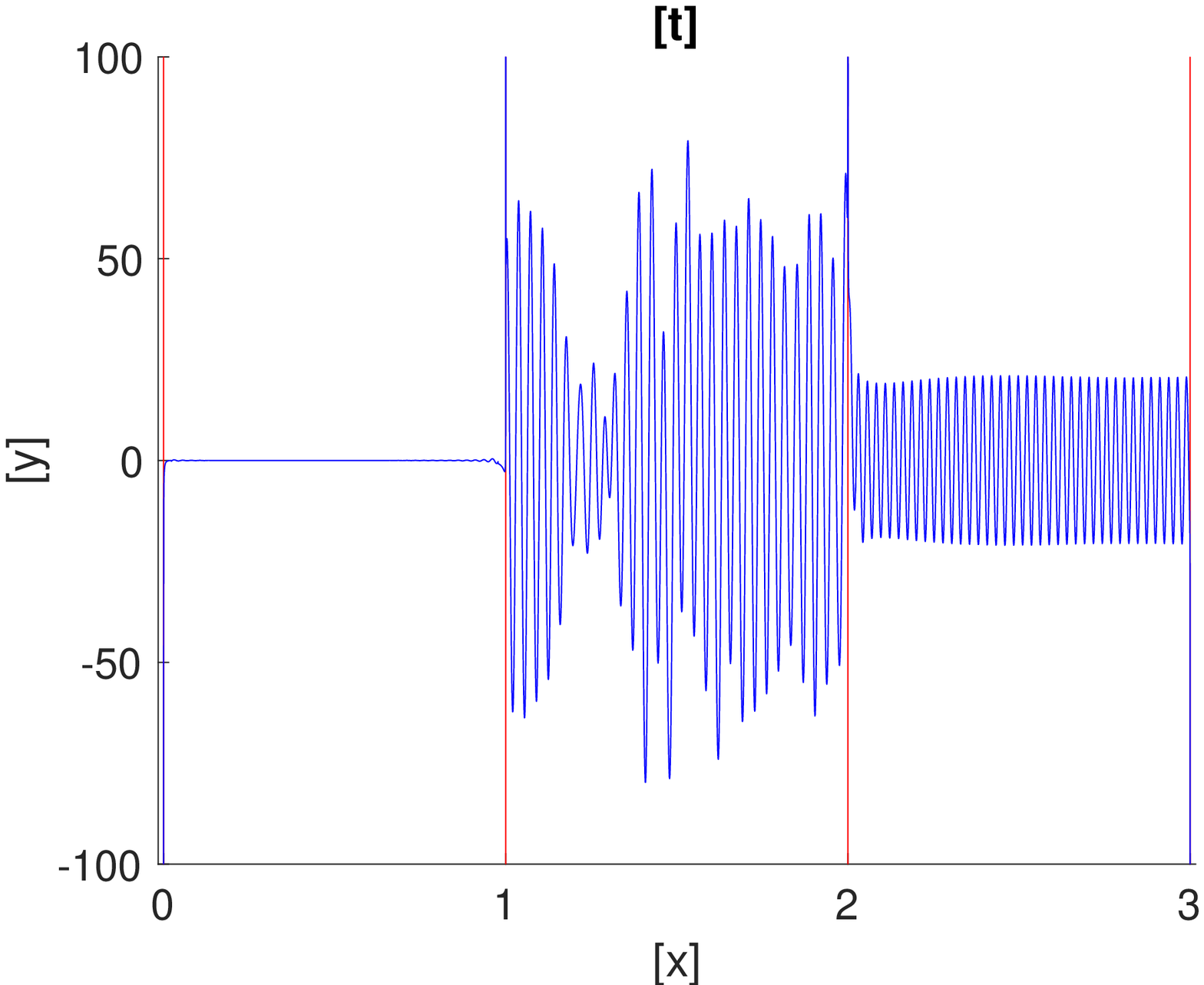}\label{fig:big}}
	\subfloat[b][]{\psfrag{[x]}[t]{$s$}
	\psfrag{[y]}[Bcb]{$\re\{\nu_{\DOFsTotal}(\bfx_\gamma(s))\}$}\psfrag{[t]}{}\includegraphics[width=.5\linewidth]{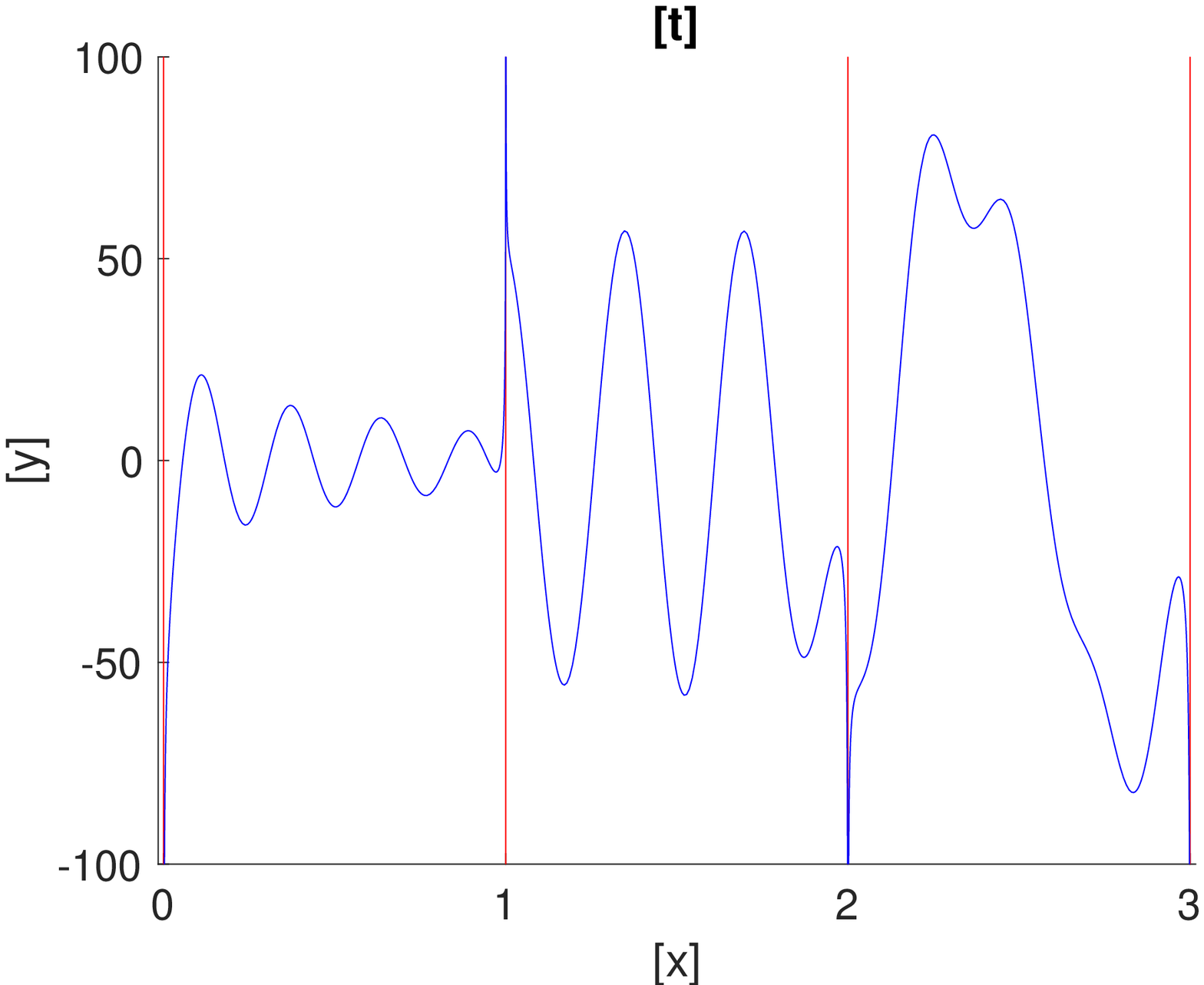}\label{fig:small}}};
	\draw(-5.6,0.5) node {shadow} ; 
	\draw (-3.6,2.1) node {facing $\gamma$};
	\end{tikzpicture}
	\caption{{The real component of the solution on the scatterer boundaries} $\Gamma$ \resub{\protect\subref{fig:big}} and $\gamma$ \resub{\protect\subref{fig:small}} for the configuration in Figure \ref{fig:exp1}\protect\subref{fig:exp1D}, with {$k=40$}.}
	\label{BoundarySoln}
\end{figure}
\resub{The configuration tested consists of an equilateral triangle $\Omega$ with perimeter $L_\Gamma=6\pi$ and a single small triangular scatterer with perimeter $L_\gamma=3\pi/5$, with the obstacles separated by a fixed distance of $\dist(\Gamma,\gamma)=\sqrt3\pi/5$. The \resub{configuration can be seen in} \ref{fig:exp1}\protect\subref{fig:exp1D}, which shows the real part of the approximation in the domain \eqref{u_M} for $p=8$.

It follows that there are exactly $k$ wavelengths on each side of $\Gamma$ and $k/10$ on each side of $\gamma$. Experiments were run for $k\in\{20,40,80,160\}$ ({so the number of wavelengths across the perimeter $\deD$ ranges from 66 to 528}). In Figure~\ref{BoundarySoln}, we show the real part of the solution $v_N, (N=1122)$ on $\Gamma$ and $\gamma$, for $k=40$.  On $\Gamma$, the first side $(s/(2\pi)\in[0,1])$ is the side in shadow, and the third side $(s/(2\pi)\in[2,3])$ is the illuminated side on the right in Figure~\ref{fig:exp1}\protect\subref{fig:exp1D}. On these two sides, the effect of the presence of $\resub{\omega}$ is negligible. However, on the middle side $(s/(2\pi)\in[1,2])$, the effect of $\resub{\omega}$ can clearly be seen.}

For a fixed number of DOFs $\DOFsTotal$, the \resub{$L^2(\partial D)$ error} is approximately the same for each $k$.
For each value of $k$ tested, we achieve approximately $1\%$ relative error with approximately $1000$ DOFs. For $k=160$ the combined boundary $\Gamma\cup\gamma$ is 528 wavelengths long, corresponding to approximately two DOFs per wavelength.
This illustrates why the method is particularly well suited to problems with one large polygon (for which the high-frequency asymptotics are well understood), and one (or many) small nearby obstacle(s) on which the high frequency asymptotics do not need to be known.
\newline
\resub{

\subsection{Experiment two}\label{sec:exp2}
Now we test the accuracy of our method as the separation (between the large and small obstacle) shrinks with increasing frequency, keeping all other parameters the same as in Experiment One (\S\ref{sec:exp1}). We choose the separation to be
\begin{equation}\label{eq:sep}
\dist(\Gamma,\gamma) = 3\pi/k,
\end{equation}
as is depicted for $k=20$ in Figure \ref{fig:exp2}\protect\subref{fig:exp2D}. Note the decrease in distance when compared with Figure \ref{fig:exp1}\protect\subref{fig:exp1D}.
\begin{figure}
	\centering
	\subfloat[a][]{
			\centering
		\psfrag{[x]}[c]{Total number of DOFs $\DOFsTotal$, for $p=1,\ldots,7$}\psfrag{[t]}[Bc]{}\psfrag{[y]}[Bc]{\vspace{-20mm}$\frac{\| \nu_{\DOFsTotal^*}-\nu_{\DOFsTotal}\|_{L^2({\partial D})}}{\|\nu_{\DOFsTotal^*}\|_{L^2({\partial D})}}$}{\includegraphics[width=.5\linewidth]{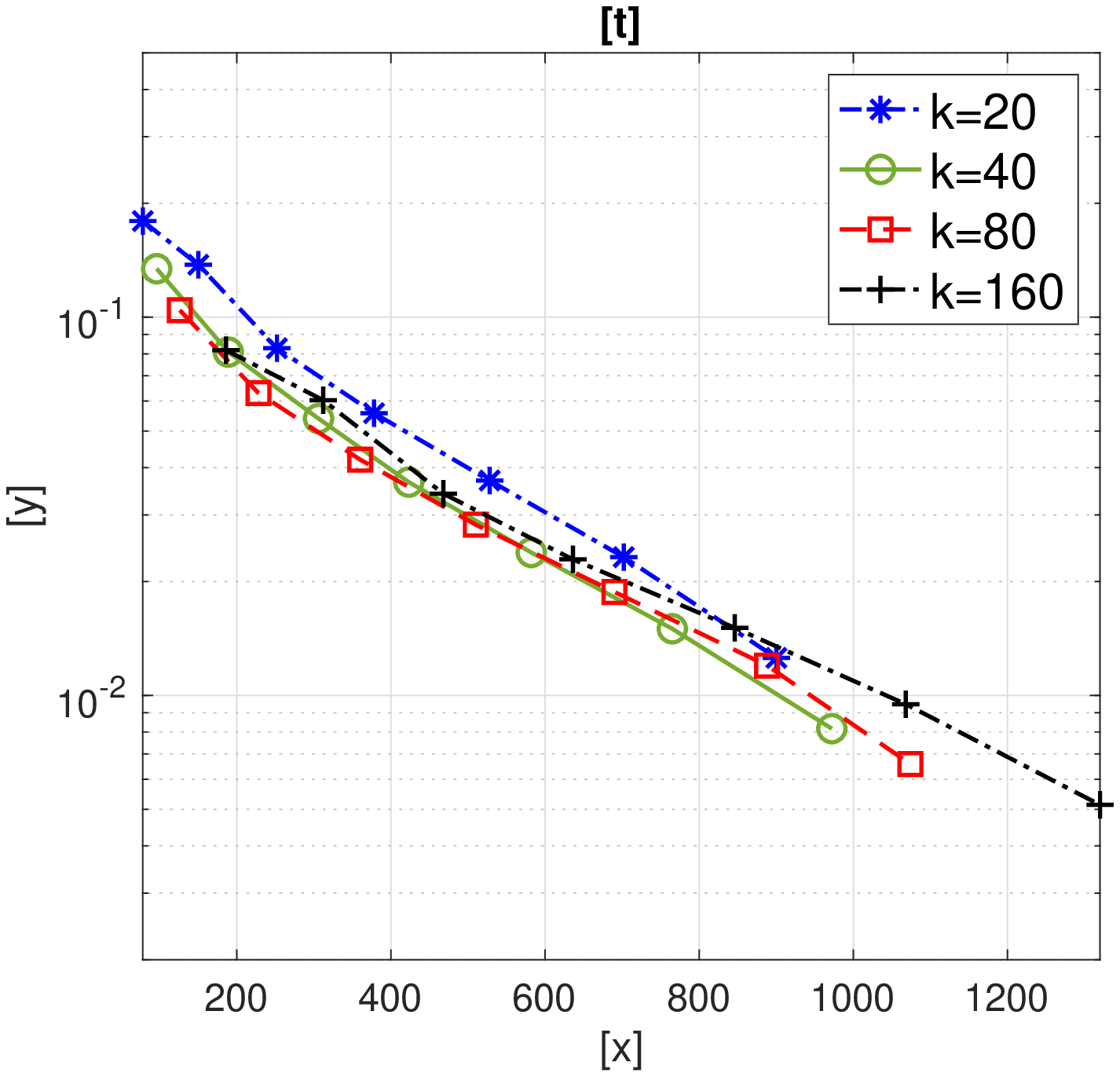}}\label{fig:exp2L2}}
		\subfloat[][]{\includegraphics[width=.5\linewidth, trim={4cm 1.7cm 1cm 1cm},clip]{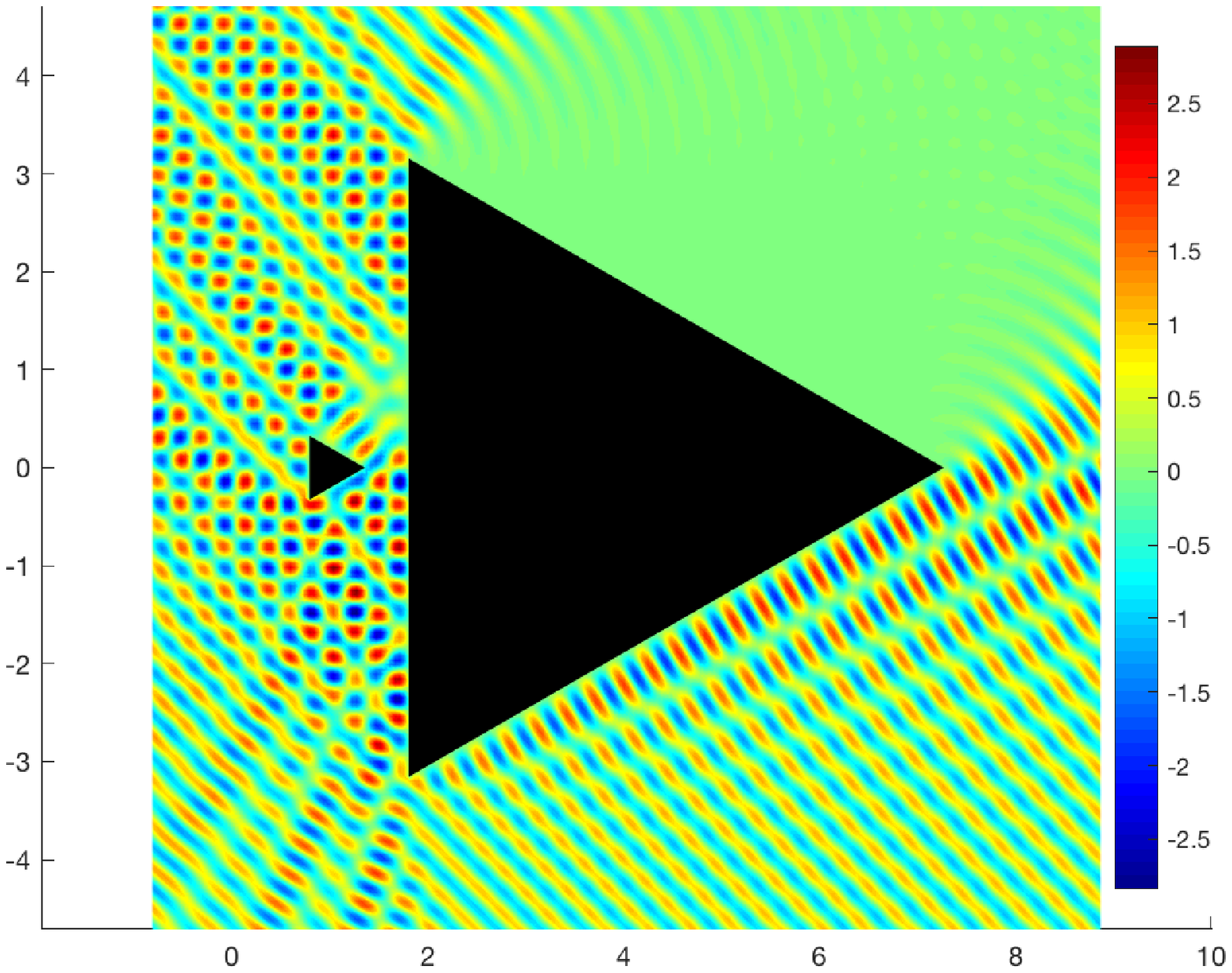}\label{fig:exp2D}}
		\caption{\resub{\protect\subref{fig:exp2L2} Convergence in $L^2(\partial D)$ and \protect\subref{fig:exp2D} the real component of the domain approximation for Experiment Two \S\ref{sec:exp2} with {$k=20$}, $L_\Gamma=6\pi$, $L_\gamma=3\pi/5$, $k=20$, $\bfd=(1,1)/\sqrt2$, $N=1122$}.}
	
	\label{fig:exp2}
\end{figure}
Despite the obstacles becoming very close together, with a separation of just $3\pi/160<0.06$ at the highest frequency tested, we observe reassuringly similar $L^2(\partial D)$ convergence rates (Figure \ref{fig:exp1}\protect\subref{fig:exp2L2}) to Experiment One (Figure \ref{fig:exp1}\protect\subref{fig:exp1L2}). This should not be unexpected, given that \eqref{eq:sep} satisfies the separation condition \eqref{as:c_r_is_0}. Upon closer inspection, the $L^2(\partial D)$ error is actually smaller when the obstacles are closer together, notably the final ($p=7$) data points for $k=40,80,160$.

This experiment demonstrates that our method can be applied to high frequency problems in which the obstacles are very close together. This is particularly encouraging when compared with iterative approaches for multiple scattering, which break down when the obstacles are too close together (as discussed in \S\ref{s:intro}).

\subsection{Experiment three}\label{sec:exp3}

Finally, we apply our method to a problem where the small obstacle consists of two small disjoint triangles $\omega_\gamma=\omega_1\cup\omega_2$. Here we take $\omega_1$ to be the smaller triangle from Experiment One (\S\ref{sec:exp1}), translated by $(0,1/2)$, and we take $\omega_2$ is the smaller triangle from Experiment One flipped horizontally and translated by $(0,-1/2)$. A key difference when compared with the previous two experiments is that this configuration will induce parabolic trapping. As with the previous experiments we have $\dist(\Gamma,\gamma)=\sqrt{3}\pi/5$, although now $L_\gamma=6\pi/5$. A consequence of this is that there will be twice as many DOFs in the standard basis $\Vhp$ than were required for the previous experiments, however with this adjustment we observe similar convergence rates (see Figure \ref{fig:exp3}\protect\subref{fig:exp3L2}). Figure \ref{fig:exp3}\protect\subref{fig:exp3D} shows the configuration and the real part of the domain approximation \eqref{u_M} for $p=8$ and $k=20$.

It is clear from Figure \ref{fig:exp3}\protect\subref{fig:exp3D} that the amplitude reaches four times that of the incoming wave, in the region of trapping between the three triangles. The results of this experiment confirm that our method can be extended to configurations of one large obstacle and multiple small obstacles, and is therefore well-suited for efficient modelling of a wide range of trapping phenomena.

\begin{figure}
	\centering
	\subfloat[][]{\psfrag{[x]}[t]{$s$}
	\psfrag{[x]}[c]{Total number of DOFs $\DOFsTotal$, for $p=1,\ldots,7$}\psfrag{[t]}[Bc]{}\psfrag{[y]}[Bc]{\vspace{-20mm}$\frac{\| \nu_{\DOFsTotal^*}-\nu_{\DOFsTotal}\|_{L^2({\partial D})}}{\|\nu_{\DOFsTotal^*}\|_{L^2({\partial D})}}$}{\includegraphics[width=.5\linewidth]{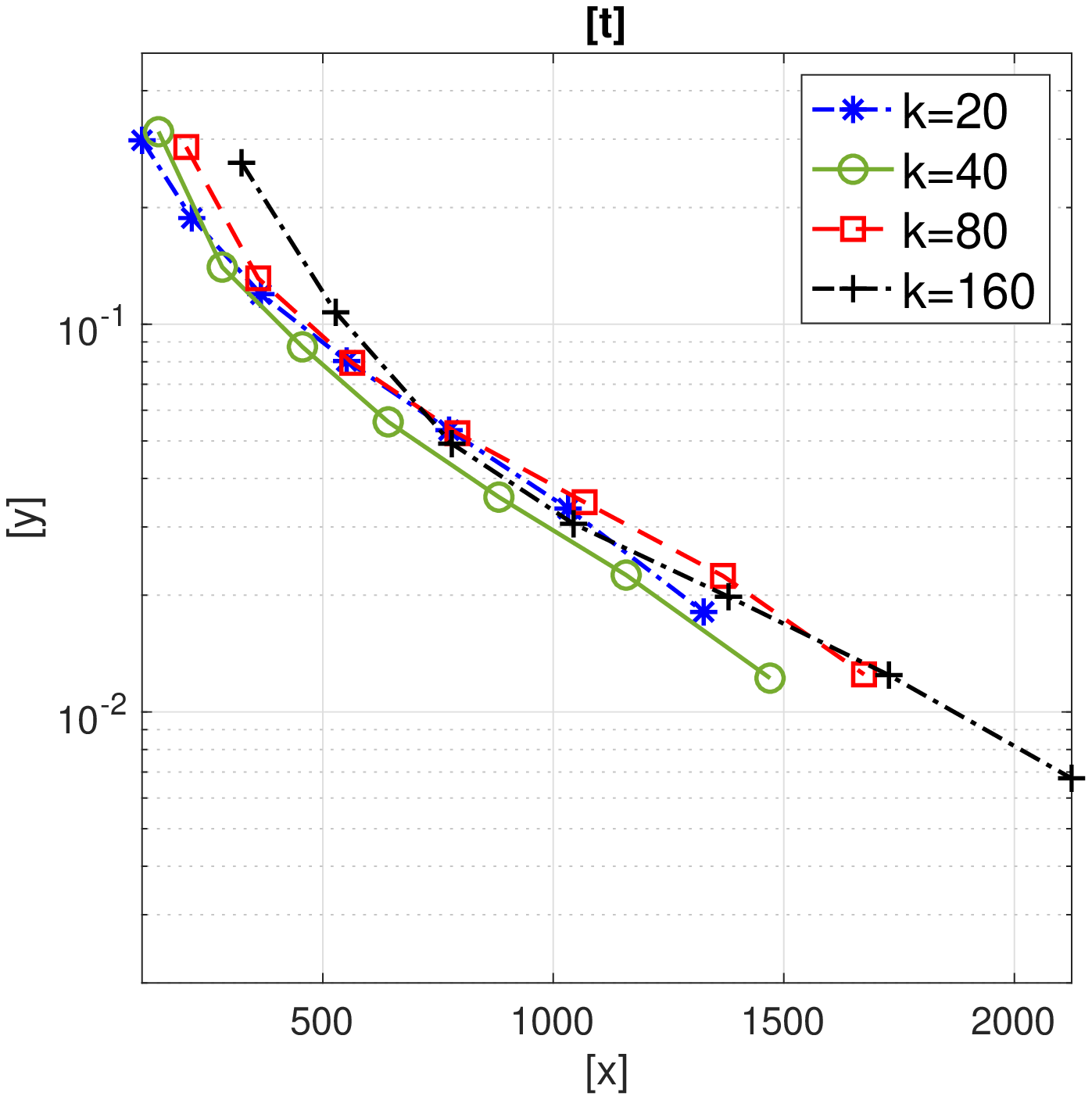}}\label{fig:exp3L2}}
	\subfloat[][]{\includegraphics[width=.5\linewidth, trim={4cm 1.7cm 1cm 1cm},clip]{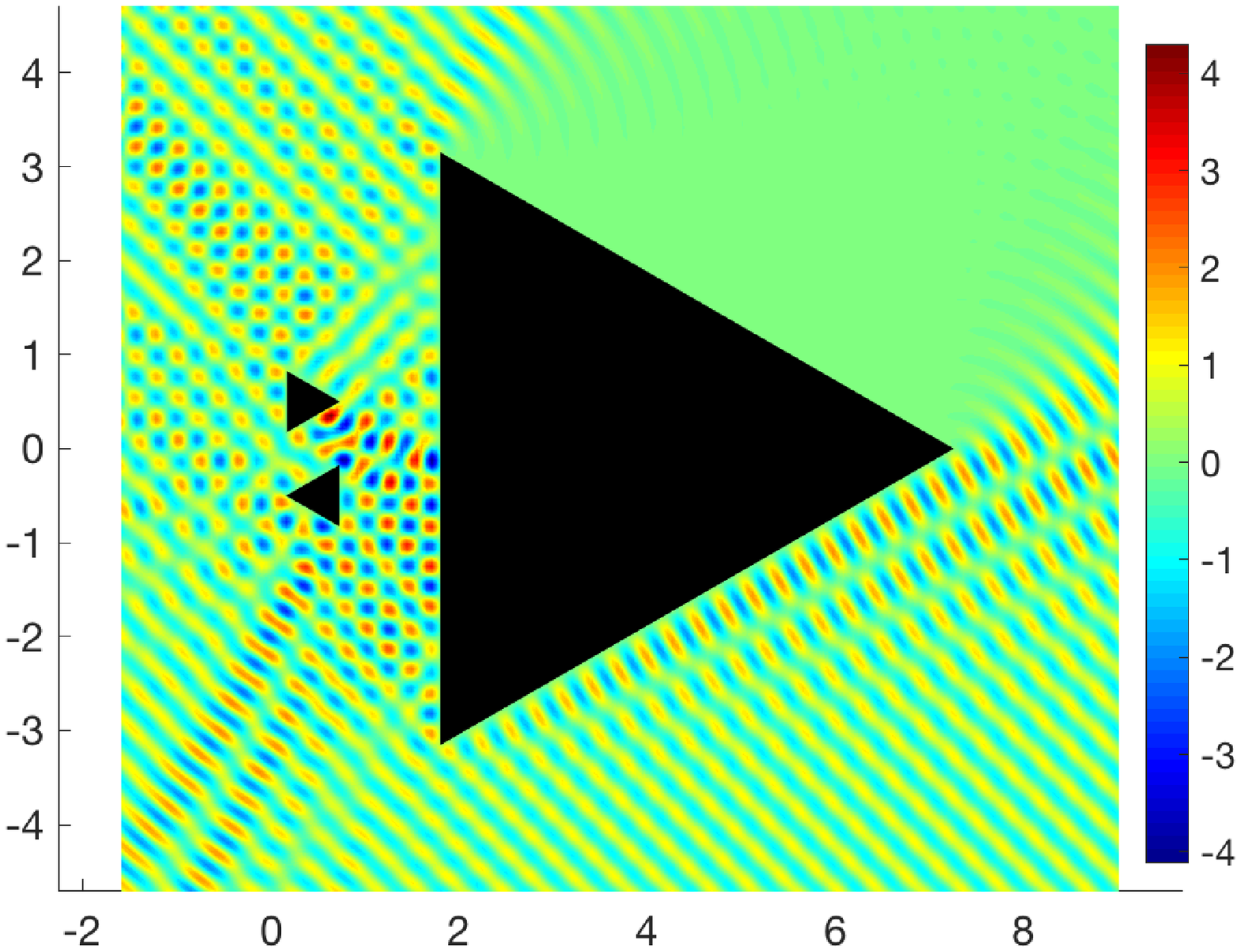}\label{fig:exp3D}}
	\caption{\resub{\protect\subref{fig:exp3L2} Convergence in $L^2(\partial D)$ and \protect\subref{fig:exp3D} real part of domain approximation for Experiment Three \S\ref{sec:exp3}, with $L_\Gamma=6\pi$, $L_\gamma=6\pi/5$, $k=20$, $\bfd=(1,1)/\sqrt2$, $N=1656$}.}\label{fig:exp3}
\end{figure}
}

\section{Conclusions and further work}
For a particular class of multiple scattering configurations, we have presented a numerical method which offers a significant reduction in degrees of freedom required at high frequencies, when compared to standard methods.
In particular, our method is most effective when one obstacle is much larger than the others.
The theoretical estimates presented in \S\ref{s:Galerkin_method} rely on a small number of reasonable assumptions, {which we prove to hold under certain conditions. H}owever the numerical results of \S\ref{s:results} show exponential convergence and stability with respect to the wavenumber in the broader setting where the small obstacle $\gamma$ is not analytic.

As suggested in Remark \ref{re:quadrature}, sophisticated quadrature rules are required in conjunction with the proposed method, but these rules can be difficult to implement for oscillatory  and singular double and triple integrals.
Alternatively, the approximation space of \S\ref{se:CombApprox} may be implemented as a collocation BEM (following the approach of \citet{GiHeHuPa:19}), which would reduce the dimension of each integral by one, making for easier implementation of oscillatory and singular quadrature rules.

The approach detailed in this paper requires at least one (ideally the largest) of the scatterers to be a convex polygon, but extension of this approach to a far broader class of configurations is possible.
The key requirement is that the high frequency asymptotics are understood on $\resub{\Omega}$, which with further work could instead be, e.g., a two-dimensional screen \citep{ChHeLa:14}, a non-convex obstacle \citep{ChHeLaTw:15}, or a penetrable obstacle \citep{GrHeLa:17}.
Such extensions would not be trivial, 
however we believe the framework established in this paper lays appropriate groundwork.

\resub{In \citep{ChLaMo:12}, the HNA method is extended from Dirichlet to impedance boundary conditions. By combining such an extension with the approach taken in the present paper, HNA methods may be designed for multiple obstacles with impedance (or Neumann) boundary conditions.}

A final area for future work is the case where {$\Gamma\cup\gamma$ is} connected, such that $\Gamma$ represents the surface of an obstacle on which an HNA basis can be used, whilst $\gamma$ is the component for which we cannot absorb the high-frequency asymptotics into the approximation space.
This extension would require more sophisticated bounds on the operator defined by \eqref{def:fullG}.

\section*{Acknowledgements}
This work was supported by EPSRC (EP/K000012/1 to S.L., PhD studentship to A.G.). \resub{A.G. also acknowledges support from the KU Leuven project C14/15/055}. A.M. acknowledges support from GNCS-INDAM and from MIUR through the
``Dipartimenti di Eccellenza" Programme (2018-2022) -- Dept. of
Mathematics, University of Pavia. The authors would also like to thank Prof. Markus Melenk for helpful discussions\resub{, and the anonymous referees for insightful comments and suggestions.}

\bibliographystyle{IMANUM-BIB}
\bibliography{AllRefsV3}

\appendix

\section{A coercive multiple scattering formulation}\label{s:appendix}

In \S\ref{s:Galerkin_method} it was noted that there exists a boundary integral formulation of the BVP \eqref{Helmholtz}--\eqref{SRC} which is \emph{coercive} (sometimes called $V$-elliptic), provided $|\gamma|$ is of the order of one wavelength. With a coercive formulation, it follows by the Lax--Milgram Theorem that the corresponding discrete problem (equivalent to \eqref{B_blocks}--\eqref{F_blocks}) is well posed, on any finite dimensional subspace of $L^2(\Gamma\cup\gamma)$. We now present this formulation.

For problems of scattering by a single star-shaped obstacle, it was shown in \citet{SpChGrSm:11} that the \emph{star combined} formulation is coercive for problems on a single star-shaped obstacle. In the thesis \citet{Gi:17} this formulation was extended to the \emph{constellation combined} formulation, where it was shown to be coercive for certain configurations consisting of multiple star-shaped obstacles. We present a version with sharper bounds here, specialising the coercivity result to the case of one large obstacle $\resub{\Omega}$ and one or many small obstacles $\resub{\omega}$. We begin by formally defining the configurations of interest:
\begin{definition}[Star- and constellation-shaped]\label{const_def}
	
	A bounded open set $\Upsilon$ with boundary $\partial \Upsilon$ is \emph{star-shaped} if there exists $\bfx^c\in\Upsilon$ and a Lipschitz continuous 
	$
	g:S^1\rightarrow\R,
	$
	where $S^1:=\{\hat\bfx\in\R^2:|\hat\bfx|=1\}$, such that $g(\hat\bfx)>0$ for all $\hat\bfx\in S^1$ with
	\[
	\partial\Upsilon=\{\bfx^c+g(\hat\bfx)(\hat\bfx-\bfx^c):\hat\bfx\in S^1\}.
	\]
	Intuitively, this may be interpreted as the following: Given any $\bfx\in\Upsilon$, one can draw a straight line from $\bfx^c$ to $\bfx$, without leaving $\Upsilon$.
	
	We say a domain is \emph{constellation-shaped} if it can be represented as the finite union of multiple star-shaped, pairwise disjoint obstacles. In such a case, for each star-shaped component we denote the above $\bfx^c$ parameter by $\bfx^c_i$, where $i$ is the index of that component.
\end{definition}
We will use the integral operator
\begin{equation}\label{def:SurfGrad}
\nabla_{S}\mathcal{S}_k\varphi(\bfx):=\int_{\partial \Upsilon}\nabla_{S}\Phi_k(\bfx,\bfy)\varphi(\bfy)\dd{s}(\bfy),\quad\text{for }\varphi\in L^2(\partial \Upsilon),\quad\bfx\in\partial\Upsilon,
\end{equation}
with the surface gradient operator of the fundamental solution as its kernel:
\begin{equation}\label{eq:SGkernel}
\nabla_{S}\Phi_k(\bfx,\bfy) := \nabla\Phi_k(\bfx,\bfy)-\bfn(\bfx)\pdiv{\Phi_k(\bfx,\bfy)}{\bfn(\bfx)},
\end{equation}
where $\Phi_k$ is as in \eqref{def:Sk}. Now we define our new BIE:
\begin{definition}[Constellation combined formulation]\label{def:star_comb}
	For a constellation-shaped domain $\Upsilon$ with boundary $\partial\Upsilon=\cup_{i=1}^{N_\Upsilon}\partial\Upsilon_i$, with $\partial\Upsilon_i$ the boundary of each star-shaped component, we define the constellation-combined operator $\mathcal{A}_k:L^2(\partial\Upsilon)\rightarrow L^2(\partial\Upsilon)$ as
	\[
    \mathcal{A}_k:=(\mathbf{Z}\cdot\bfn)\left(\frac{1}{2}\mathcal{I}+\mathcal{D}_k'\right)+\mathbf{Z}\cdot\nabla_{S}\mathcal{S}_k-\imag\hat{\eta}\mathcal{S}_k,
	\]
	where $\mathbf{Z}(\bfx)=\bfx-\bfx^c_i$ (with $\bfx^c_i\in\Upsilon_i$ chosen as $\bfx^c$ for each star-shaped component in Definition \ref{const_def}) on $\partial\Upsilon_i$, for $i=1,\ldots,N_{\Upsilon}$ and $\hat{\eta}(\bfx):=k|\mathbf{Z}(\bfx)|+\imag/2$. This operator yields an alternative BIE to \eqref{BIE}, namely
	\[
	\calA_k\pdiv{u}{\bfn}=f_k,\quad\text{on }\partial\Upsilon,
	\]
where the right-hand side data is
	\[
	f_k:=\left(\mathbf{Z}\cdot\nabla-\imag\hat{\eta}\right) \uinc ,\quad\text{on }\partial\Upsilon.
	\]
\end{definition}
Invertibility of $\calA_k$ follows by \citet[Theorem~2.41]{ACTA} and is shown in \citet[Theorem~5.6]{Gi:17}. For single star-shaped obstacles, the following is the key result of \citet{SpChGrSm:11}.
\begin{theorem}\label{th:starCombCoerc}
	Suppose $\Upsilon$ is star-shaped and $\calA_k$ is defined as in Definition \ref{def:star_comb}. Then the following coercivity result holds:
	\begin{equation}\label{starCoercive}
	\left|\left(\calA_k\varphi,\varphi\right)_{L^2(\partial\Upsilon)}\right|\geq \alpha_{\partial\Upsilon} \|\varphi\|_{L^2(\partial\Upsilon)}^2,\quad\text{for all }\varphi\in L^2(\partial \Upsilon),
	\end{equation}
	where
	\[
	\alpha_{\partial\Upsilon}:=	\frac{1}{2}\essinf{\bfx\in\partial\Upsilon}(\bfx\cdot\bfn(\bfx))>0.
	\]
\end{theorem}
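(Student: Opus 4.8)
The plan is to follow the Rellich--Morawetz multiplier strategy of \citet{SpChGrSm:11}, exploiting the fact that the star-combined operator $\calA_k$ is precisely the boundary expression produced by testing the Helmholtz equation against a Morawetz--Ludwig multiplier. After translating so that the star-centre $\bfx^c$ is at the origin (so $\mathbf{Z}(\bfx)=\bfx$ and $\mathbf{Z}\cdot\bfn=\bfx\cdot\bfn$), I would set $u:=\mathcal{S}_k\varphi$ in $\R^2$ and use the standard jump relations for the single-layer potential on a Lipschitz boundary (see, e.g., \citet{ACTA}): the Dirichlet trace $\gamma u=\mathcal{S}_k\varphi$ is continuous across $\partial\Upsilon$, while the interior and exterior normal derivatives satisfy $\partial^{\mp}_{\bfn}u=(\pm\tfrac12\mathcal{I}+\mathcal{D}_k')\varphi$, so that the jump is $\varphi=\partial^-_{\bfn}u-\partial^+_{\bfn}u$. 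Decomposing the gradient into its normal and tangential parts, $\mathbf{Z}\cdot\nabla u=(\mathbf{Z}\cdot\bfn)\partial_{\bfn}u+\mathbf{Z}\cdot\nabla_S u$, and recalling $\nabla_S\mathcal{S}_k\varphi=\nabla_S(\gamma u)$ together with $-\imag\hat\eta=\tfrac12-\imag k|\mathbf{Z}|$, one sees that $\calA_k\varphi$ is exactly the interior trace of the multiplier $Mu:=\mathbf{Z}\cdot\nabla u+\tfrac12 u-\imag k|\mathbf{Z}|u$. Hence $(\calA_k\varphi,\varphi)_{L^2(\partial\Upsilon)}=\int_{\partial\Upsilon}(Mu)|_-\,\overline{(\partial^-_{\bfn}u-\partial^+_{\bfn}u)}\,\dd s$, and this is the quantity to be bounded below.

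Next I would apply the Rellich identity for Helmholtz solutions separately in the interior $\Upsilon$ and in a truncated exterior $B_R\setminus\overline\Upsilon$; note that $u=\mathcal{S}_k\varphi$ solves $\Delta u+k^2u=0$ on each side and satisfies the radiation condition \eqref{SRC}. In two dimensions the identity reads $\nabla\cdot\bigl[2\operatorname{Re}\{(\mathbf{Z}\cdot\nabla\bar u)\nabla u\}-\mathbf{Z}(|\nabla u|^2-k^2|u|^2)\bigr]=2k^2|u|^2$ (using $\nabla\cdot\mathbf{Z}=2$), which integrates to convert the volume quantity $\int 2k^2|u|^2$ into the boundary term $\int_{\partial}[2\operatorname{Re}\{(\mathbf{Z}\cdot\nabla\bar u)\partial_{\bfn}u\}-(\mathbf{Z}\cdot\bfn)(|\nabla u|^2-k^2|u|^2)]\,\dd s$. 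The remaining $\tfrac12 u$ and $-\imag k|\mathbf{Z}|u$ pieces of the multiplier contribute exactly the lower-order terms that Morawetz and Ludwig introduced so that (i) the combination is sign-definite in two dimensions and (ii) the radiation condition renders the $\partial B_R$ contribution non-negative as $R\to\infty$. On $\partial\Upsilon$ the continuous pieces ($\gamma u$ and $\nabla_S u$) cancel between the two sides and the normal-derivative terms recombine; using $\partial^-_{\bfn}u=\partial^+_{\bfn}u+\varphi$, the surviving uncontrolled contribution is exactly $\tfrac12\int_{\partial\Upsilon}(\mathbf{Z}\cdot\bfn)|\varphi|^2\,\dd s$.

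Collecting these pieces, I expect to arrive at an identity of the schematic form $\operatorname{Re}(\calA_k\varphi,\varphi)=\tfrac12\int_{\partial\Upsilon}(\bfx\cdot\bfn)|\varphi|^2\,\dd s+\mathcal{R}_{\mathrm{int}}+\mathcal{R}_{\mathrm{ext}}$, in which $\mathcal{R}_{\mathrm{int}}=\int_{\Upsilon}2k^2|u|^2\,\dd V\ge0$ and $\mathcal{R}_{\mathrm{ext}}\ge0$ is the non-negative exterior/far-field term furnished by the Morawetz--Ludwig argument. Since $|(\calA_k\varphi,\varphi)|\ge\operatorname{Re}(\calA_k\varphi,\varphi)$ and $\bfx\cdot\bfn>0$ almost everywhere on the boundary of a star-shaped set centred at the origin, this yields $|(\calA_k\varphi,\varphi)|\ge\tfrac12\essinf{\bfx\in\partial\Upsilon}(\bfx\cdot\bfn)\,\|\varphi\|_{L^2(\partial\Upsilon)}^2$, which is the claimed bound with $\alpha_{\partial\Upsilon}=\tfrac12\essinf{\bfx\in\partial\Upsilon}(\bfx\cdot\bfn)$; its strict positivity is the defining property of a (Lipschitz) star-shaped set.

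The hard part will be making the Rellich--Morawetz manipulation rigorous at the stated regularity. For $\varphi\in L^2(\partial\Upsilon)$ and a merely Lipschitz star-shaped boundary, $\nabla u$ need not possess an $L^2$ boundary trace, so the divergence identities and the passage to the limit $R\to\infty$ cannot be carried out verbatim. I would therefore first establish the identity for smooth densities on smooth star-shaped boundaries, where all traces are classical, control the far-field term using \eqref{SRC} and the explicit large-argument asymptotics of $\mathcal{S}_k\varphi$, and then descend to the general Lipschitz, $L^2$ setting by density together with the $L^2(\partial\Upsilon)$-boundedness of $\mathcal{S}_k$, $\mathcal{D}_k'$ and $\nabla_S\mathcal{S}_k$. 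Verifying the sign of $\mathcal{R}_{\mathrm{ext}}$ and the non-negativity of the $\partial B_R$ contribution---the genuinely oscillatory, frequency-uniform part of the argument---is the crux, and is exactly the point at which the choice $\hat\eta=k|\mathbf{Z}|+\imag/2$ is forced.
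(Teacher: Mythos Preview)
The paper does not actually prove this theorem: it is quoted verbatim as ``the key result of \citet{SpChGrSm:11}'' and is simply invoked as a black box in the proof of Theorem~\ref{th:ConstCoerc}. So there is no in-paper argument to compare against; your proposal goes well beyond what the paper does by sketching the proof of the cited result itself.

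That said, your sketch is faithful to the strategy of \citet{SpChGrSm:11}: the identification of $\calA_k\varphi$ with the interior boundary trace of the Morawetz--Ludwig multiplier applied to $u=\mathcal S_k\varphi$, the interior/exterior Rellich--Morawetz identities, the cancellation of the continuous traces across $\partial\Upsilon$ leaving $\tfrac12\int_{\partial\Upsilon}(\bfx\cdot\bfn)|\varphi|^2$, and the sign control of the far-field term via the specific choice $\hat\eta=k|\bfZ|+\imag/2$ are exactly the ingredients used there. Your cautionary remarks about regularity (Lipschitz boundary, $L^2$ density, density/approximation arguments) are also on point and mirror how the cited paper handles these issues. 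In short: your approach is correct and is essentially the one the paper defers to by citation; the paper itself offers nothing further to compare.
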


In the thesis \citet{Gi:17}, the above result was extended to configurations of multiple star-shaped obstacles, under additional geometric constraints. These essentially required the obstacles to be sufficiently far apart, when compared with the wavelength and combined perimeter of the configuration. One way to interpret this is by decomposing $\calA_k$ into block operator form (as in \eqref{cts_prob}), where each off-diagonal block corresponds to the interaction between two disjoint obstacles, and the diagonal blocks correspond to self interactions. It follows by Theorem \ref{th:starCombCoerc} that the diagonal operators will be coercive in a constellation-shaped domain. If the interaction between the obstacles is sufficiently small, then any contribution from the off-diagonal terms will be small, and the full block operator will be coercive.
 It follows from \eqref{eq:SGkernel} that the kernel of the integral component
\[
\calA_k-(\bfZ\cdot\bfn)\frac{1}{2}\calI  =(\mathbf{Z}\cdot\bfn)\mathcal{D}_k'+\mathbf{Z}\cdot\nabla_{S}\mathcal{S}_k-\imag\hat{\eta}\mathcal{S}_k
\]
is
\[
(\mathbf{Z}(\bfx)\cdot\bfn(\bfx))\pdiv{\Phi_k(\bfx,\bfy)}{\bfn(\bfx)}+\bfZ(\bfx)\cdot\left(\nabla\Phi_k(\bfx,\bfy)-\bfn(\bfx)\pdiv{\Phi_k(\bfx,\bfy)}{\bfn(\bfx)}\right)-\imag\hat{\eta}(\bfx)\Phi_k(\bfx,\bfy)
\]which simplifies to
\[
K(\bfx,\bfy):=\bfZ(\bfx)\cdot\nabla\Phi_k(\bfx,\bfy)-\imag\hat{\eta}(\bfx)\Phi_k(\bfx,\bfy).
\]
We now consider disjoint, star-shaped boundaries $X$ and $Y$, with $\bfx\in X$ and $\bfy\in Y$. We can bound the kernel $K$ by considering Definition \ref{def:star_comb} and \eqref{eq:SGkernel}, noting $|\bfZ(\bfx)|\leq\diam(X)$, and upper bounds on the Hankel functions from \citet[(1.22),(1.23)]{ChGrLaLi:09}
\begin{align*}
&|K(\bfx,\bfy)| \\
& \leq k \diam(X)\left[\sqrt{\frac{1}{8\pi k\dist(X,Y)}} + \frac{1}{2\pi k \dist(X,Y)}\right] + \left(k\diam(X)+\frac{1}{2}\right)\sqrt{\frac{1}{8\pi k \dist(X,Y) }}\\
&\leq \left(k\diam(X)+\frac{1}{2}\right)\left[\sqrt{\frac{1}{2\pi k \dist(X,Y)}} + \frac{1}{2\pi k \dist(X,Y)}\right].
\end{align*}
It follows by the definition of the operator norm, and the Cauchy--Schwarz inequality (see  \citet[Lemma~5.13]{Gi:17} for a more general result) that for disjoint Lipschitz boundaries $X$ and $Y$,
\begin{align}
\|\calA_{k,Y\shortrightarrow X}\|_{L^2(Y)\shortrightarrow L^2(X)}&\leq \sqrt{|X||Y|}\esssup{\bfx\in X,\bfy\in Y}|K(\bfx,\bfy)|\nonumber\\
\leq&\sqrt{|X||Y|}\left(k \diam(X)+\frac{1}{2}\right)\left[\sqrt{\frac{1}{2\pi k \dist(X,Y)}} + \frac{1}{2\pi k \dist(X,Y)}\right].\label{eq:starCombInteraction}
 \end{align}
The bound \eqref{eq:starCombInteraction} quantifies the interaction between two disjoint Lipschitz boundaries $X$ and $Y$. The following theorem exploits this bound, deriving a coercivity result for a subclass of configurations considered in this paper - one large and one (or many) small obstacles.

\begin{theorem}\label{th:ConstCoerc}
	Suppose we have a multiple scattering configuration consisting of one large star-shaped obstacle with boundary $\Gamma$, and $\numScats$ small star-shaped obstacles $\gamma_i$ with boundary $\gamma=\cup_i\gamma_i$. Suppose further that obstacles are pairwise disjoint, such that the minimum distance between any two obstacles is bounded below by $R>0$. Assuming $|\Gamma|\geq|\gamma|$, if
	\begin{equation}\label{c_gCondish}
	{|\gamma|}<\left(\frac{\essinf{\bfx\in\Gamma\cup\gamma}{\left\{\bfZ(\bfx)\cdot\bfn(\bfx)\right\} } }{\left(k|\Gamma|+1\right)\sqrt{|\Gamma|}(2+\sqrt{\numScats})\left(\sqrt{\frac{1}{2\pi k R}}+\frac{1}{2 \pi k R}\right)}\right)^2,
	\end{equation}
	then the Constellation Combined operator of Definition \ref{def:star_comb} is coercive (i.e. satisfies a bound of the form \eqref{starCoercive}) with coercivity constant
\[
\alpha_{\Gamma\cup\gamma}:=\frac{1}{2}\essinf{\bfx\in\Gamma\cup\gamma}{\left\{\bfZ(\bfx)\cdot\bfn(\bfx)\right\} }  - \sqrt{|\Gamma||\gamma|}\left(k|\Gamma|+1\right)(2+\sqrt{\numScats})\left(\sqrt{\frac{1}{8\pi k R}}+\frac{1}{4 \pi k R}\right)
\]	
\end{theorem}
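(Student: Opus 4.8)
The plan follows the standard route for a block boundary integral operator that is coercive on the diagonal and weakly coupled off it. Write $\varphi=(\varphi_\Gamma,\varphi_{\gamma_1},\dots,\varphi_{\gamma_{\numScats}})\in L^2(\Gamma\cup\gamma)$ and decompose the Constellation Combined operator as $\calA_k=\calA_k^{\mathrm{diag}}+\calA_k^{\mathrm{off}}$, where $\calA_k^{\mathrm{diag}}$ collects the self-interaction blocks $\calA_{k,\Gamma\shortrightarrow\Gamma}$ and $\calA_{k,\gamma_i\shortrightarrow\gamma_i}$ ($i=1,\dots,\numScats$), and $\calA_k^{\mathrm{off}}$ collects the pairwise interaction blocks $\calA_{k,\gamma_i\shortrightarrow\Gamma}$, $\calA_{k,\Gamma\shortrightarrow\gamma_i}$ and $\calA_{k,\gamma_\ell\shortrightarrow\gamma_i}$ ($\ell\neq i$). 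Since
\[
\operatorname{Re}\left(\calA_k\varphi,\varphi\right)_{L^2(\Gamma\cup\gamma)}\geq \operatorname{Re}\left(\calA_k^{\mathrm{diag}}\varphi,\varphi\right)_{L^2(\Gamma\cup\gamma)}-\left|\left(\calA_k^{\mathrm{off}}\varphi,\varphi\right)_{L^2(\Gamma\cup\gamma)}\right|
\]
and $\operatorname{Re} z\le|z|$, it is enough to bound the two terms on the right and then to read off a bound of the form \eqref{starCoercive}.

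For the diagonal term, the key point is that each diagonal block is itself a single-obstacle Constellation (equivalently Star) Combined operator: restricting $\calA_k$ to a density supported on one component $X\in\{\Gamma,\gamma_1,\dots,\gamma_{\numScats}\}$ and then restricting the output to $X$ leaves only the boundary integrals of $\mathcal{S}_k$, $\mathcal{D}_k'$ and $\nabla_{S}\mathcal{S}_k$ over $X$, with $\bfZ(\bfx)=\bfx-\bfx^c$ ($\bfx^c$ the centre of that component) and $\hat\eta(\bfx)=k|\bfZ(\bfx)|+\imag/2$ there, which is exactly the operator of Definition~\ref{def:star_comb} for the star-shaped obstacle $X$. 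Applying Theorem~\ref{th:starCombCoerc} — in the form in which it is actually established in \citet{SpChGrSm:11}, namely as a lower bound on the real part, which is what is needed here since the absolute-value form is not additive over blocks — therefore gives, for each such $X$,
\[
\operatorname{Re}\left(\calA_{k,X\shortrightarrow X}\varphi_X,\varphi_X\right)_{L^2(X)}\geq \tfrac12\essinf{\bfx\in X}\left(\bfZ(\bfx)\cdot\bfn(\bfx)\right)\|\varphi_X\|_{L^2(X)}^2 ,
\]
and summing over the $\numScats+1$ components yields $\operatorname{Re}(\calA_k^{\mathrm{diag}}\varphi,\varphi)_{L^2(\Gamma\cup\gamma)}\geq \tfrac12\essinf{\bfx\in\Gamma\cup\gamma}(\bfZ(\bfx)\cdot\bfn(\bfx))\,\|\varphi\|_{L^2(\Gamma\cup\gamma)}^2$.

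For the off-diagonal term I would expand $(\calA_k^{\mathrm{off}}\varphi,\varphi)_{L^2(\Gamma\cup\gamma)}$ into its three families of cross terms and bound each block operator norm by the interaction estimate \eqref{eq:starCombInteraction}. Every pair of distinct obstacles is at distance at least $R$, so the distance-dependent factor there is at most $\sqrt{1/(2\pi kR)}+1/(2\pi kR)=2\left(\sqrt{1/(8\pi kR)}+1/(4\pi kR)\right)$; and every obstacle, being a Jordan curve, has diameter at most half its perimeter, hence at most $\tfrac12|\Gamma|$ (using $|\Gamma|\geq|\gamma|\geq|\gamma_i|$), so that $k\,\diam(\cdot)+\tfrac12\le\tfrac12(k|\Gamma|+1)$. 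Collecting the resulting sums by Cauchy--Schwarz — once, against $(\sum_i|\gamma_i|)^{1/2}=|\gamma|^{1/2}$, for the $\Gamma\leftrightarrow\gamma_i$ terms, and once more, producing the factor $\sqrt{\numScats}$, for the $\gamma_\ell\leftrightarrow\gamma_i$ terms — together with $2\|\varphi_\Gamma\|\,\|\varphi_\gamma\|\le\|\varphi\|^2$, $\|\varphi_\gamma\|\le\|\varphi\|$ and $|\gamma|\le\sqrt{|\Gamma||\gamma|}$, should give
\[
\left|\left(\calA_k^{\mathrm{off}}\varphi,\varphi\right)_{L^2(\Gamma\cup\gamma)}\right|
\leq \sqrt{|\Gamma||\gamma|}\,\left(k|\Gamma|+1\right)\left(2+\sqrt{\numScats}\right)\left(\sqrt{\tfrac{1}{8\pi kR}}+\tfrac{1}{4\pi kR}\right)\|\varphi\|_{L^2(\Gamma\cup\gamma)}^2 ,
\]
which is precisely $\left(\tfrac12\essinf{\bfx\in\Gamma\cup\gamma}(\bfZ(\bfx)\cdot\bfn(\bfx))-\alpha_{\Gamma\cup\gamma}\right)\|\varphi\|_{L^2(\Gamma\cup\gamma)}^2$. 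Subtracting from the diagonal bound gives $\operatorname{Re}(\calA_k\varphi,\varphi)_{L^2(\Gamma\cup\gamma)}\geq\alpha_{\Gamma\cup\gamma}\|\varphi\|_{L^2(\Gamma\cup\gamma)}^2$, and solving the inequality $\alpha_{\Gamma\cup\gamma}>0$ for $|\gamma|$ (all remaining quantities being fixed) reproduces exactly the hypothesis \eqref{c_gCondish}; with $\operatorname{Re} z\le|z|$ this is the asserted coercivity, with the constant $\alpha_{\Gamma\cup\gamma}$ as in the statement.

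I expect the main obstacle to be the constant bookkeeping in the off-diagonal step: obtaining the stated $\sqrt{|\Gamma||\gamma|}\,(2+\sqrt{\numScats})$ dependence, rather than a cruder $|\gamma|$- or $\numScats$-dependence, requires choosing at each stage the right Cauchy--Schwarz pairing over the $\numScats$ small components and the right point at which to replace a diameter by half a perimeter. A smaller but genuine subtlety, flagged above, is that one must invoke the real-part form of Theorem~\ref{th:starCombCoerc}: the absolute-value statement as quoted in the text does not add up over the diagonal blocks, whereas the real-part estimate of \citet{SpChGrSm:11} does.
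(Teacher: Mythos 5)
Your proof follows essentially the same route as the paper's: the same decomposition into diagonal and cross blocks, the same per-block interaction bound \eqref{eq:starCombInteraction} with $\diam\le\frac12|\Gamma|$ and distance at least $R$, and the same Cauchy--Schwarz bookkeeping producing the factor $\sqrt{|\Gamma||\gamma|}\,(2+\sqrt{\numScats})$ and the stated $\alpha_{\Gamma\cup\gamma}$. Your one refinement --- invoking the real-part form of Theorem \ref{th:starCombCoerc} so that the diagonal lower bounds genuinely sum over the $\numScats+1$ blocks --- is correct, and is in fact what is needed to make rigorous the paper's own passage from the absolute-value statement \eqref{starCoercive} to the combined diagonal estimate.
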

\begin{proof}
	To simplify the notation, we shall write $\|\calA_{k,Y\shortrightarrow X}\|$ to mean $\|\calA_{k,Y\shortrightarrow X}\|_{L^2(Y)\shortrightarrow L^2(X)}$. We begin by decomposing the operator into a sum of operators defined on subsets of $\Gamma\cup\gamma$,
	\begin{equation}\label{eq:Asplit}
		\left(\calA_{k}\varphi,\varphi\right)_{L^2(\Gamma\cup\gamma)} = 	\left(\calA_{\text{diag}}\varphi,\varphi\right)_{L^2(\Gamma\cup\gamma)} + \left(\calA_{\text{cross}}\varphi,\varphi\right)_{L^2(\Gamma\cup\gamma)},
	\end{equation}
	in which we have split the operator into diagonal and off-diagonal terms
	\begin{align}
		\calA_{\text{diag}} :=\calA_{k,\Gamma\shortrightarrow\Gamma}+\sum_{i=1}^{\numScats}\calA_{k,\gamma_i\shortrightarrow\gamma_i},\quad
		\calA_{\text{cross}}:=	\calA_{k,\gamma\shortrightarrow\Gamma}+\calA_{k,\Gamma\shortrightarrow\gamma}+\sum^{\numScats}_{i=1}\calA_{k,\gamma_i\shortrightarrow(\gamma\setminus\gamma_i)},
	\end{align}
	where we have abused the notation of Definition \ref{def:Txy_bits}, which is used differently here to mean:
	\[
	\calK_{X\shortrightarrow Y}\varphi:=
	\mathbbm{1}_Y\calK[\mathbbm{1}_X\varphi],
	\]
	where $\mathbbm{1}_X$ is an indicator function, equal to one on $X$ and zero otherwise.
	The diagonal terms can all be bounded via Theorem \ref{th:starCombCoerc}, yielding
	\begin{equation}\label{wantPositive}
	\left|\left(\calA_{k}\varphi,\varphi\right)_{L^2(\Gamma\cup\gamma)} \right|\geq
	\quad\frac{1}{2}\essinf{\bfx\in\Gamma\cup\gamma}{\left\{\bfZ(\bfx)\cdot\bfn(\bfx)\right\} }\|\varphi\|^2_{L^2(\Gamma\cup\gamma)}-
	\left|(\calA_{\text{cross}}\varphi,\varphi)_{L^2(\Gamma\cup\gamma)}\right|.
	\end{equation}
	We want to find conditions under which the right-hand side of the above inequality is positive, hence we require the negative term to be sufficiently small. We bound these off-diagonal terms
	\begin{equation}
	\left|(\calA_{\text{cross}}\varphi,\varphi)_{L^2(\Gamma\cup\gamma)}\right|\leq
	\|\calA_{\text{cross}}\|_{L^2(\Gamma\cup\gamma)\shortrightarrow L^2(\Gamma\cup\gamma)}\|\varphi\|_{L^2(\Gamma\cup\gamma)}^2.
	\end{equation}
	We now split the above norm on $\calA_{\text{cross}}$ using the triangle inequality noting the terms in \eqref{eq:Asplit}, and apply the bound \eqref{eq:starCombInteraction} to each component,
	\begin{align}
&		\|\calA_{\text{cross}}\|_{L^2(\Gamma\cup\gamma)\shortrightarrow L^2(\Gamma\cup\gamma)}\leq \|\calA_{k,\gamma\shortrightarrow\Gamma}\|+\|\calA_{k,\Gamma\shortrightarrow\gamma}\|+\sum^{\numScats}_{i=1}\|\calA_{k,\gamma_i\shortrightarrow(\gamma\setminus\gamma_i)}\|\nonumber
		\\\leq &
		\frac{1}{2}\left(\sqrt{|\Gamma||\gamma|}+\sqrt{|\gamma||\Gamma|}+\sqrt{|\gamma|}\sum_{i=1}^{\numScats}\sqrt{|\gamma_i|} \right)\left(k|\Gamma|+1\right)\left(\sqrt{\frac{1}{2\pi k R}}+\frac{1}{2 \pi k R}\right),\label{ineq:normCrossSplit}
	\end{align}
	where we have used $|\Gamma|\geq\{|\gamma|,2\diam(\Gamma),2\diam({\gamma})\}$ to simplify terms. Appealing also to the Cauchy-Schwarz inequality, we can write
	\[
	\sum_{i=1}^{\numScats}\sqrt{|\gamma_i|}\leq \sqrt{\numScats|\gamma|}\leq \sqrt{\numScats|\Gamma|},
	\]
	which can be used to simplify \eqref{ineq:normCrossSplit} to obtain
	\begin{align*}
    \|\calA_{\text{cross}}\|_{L^2(\Gamma\cup\gamma)\shortrightarrow L^2(\Gamma\cup\gamma)}\leq 
		\frac{1}{2}\sqrt{|\Gamma||\gamma|}\left(k|\Gamma|+1\right)(2+\sqrt{\numScats})\left(\sqrt{\frac{1}{2\pi k R}}+\frac{1}{2 \pi k R}\right).
	\end{align*}
Noting \eqref{wantPositive}, we require that
\[
\frac{1}{2}\essinf{\bfx\in\Gamma\cup\gamma}{\left\{\bfZ(\bfx)\cdot\bfn(\bfx)\right\} } - \frac{1}{2}\sqrt{|\Gamma||\gamma|}\left(k|\Gamma|+1\right)(2+\sqrt{\numScats})\left(\sqrt{\frac{1}{2\pi k R}}+\frac{1}{2 \pi k R}\right)>0,
\]
which is equivalent to the condition \eqref{c_gCondish}.
\end{proof}

We do not expect the above result to be sharp. A key consequence is the following: if $|\gamma|$ is no more than a fixed fraction of a wavelength, the constellation combined formulation is coercive. We conclude this appendix with bounds on two of the key constants of the Galerkin method as outlined in \S\ref{s:Galerkin_method}, if the constellation combined formulation is used instead of the standard combined formulation. With the standard formulation, we are unable to bound these constants given current available theory.

\begin{corollary}\label{cor:Cea}
	Suppose we reformulate the Galerkin method of \S\ref{s:Galerkin_method} instead using the constellation combined formulation of Definition \ref{def:star_comb}, and that our scattering configuration $\Upsilon=\resub{\Omega}\cup\resub{\omega}$ satisfies the conditions of Theorem \ref{th:ConstCoerc}. Then the constants $C_q(k)$ and $N_0(k)$ of Lemma \ref{uniquenessProj}, Theorem \ref{th:anz_approx_bd} and Corollary \ref{cor:domainErr} satisfy
	\[
	C_q(k)=\frac{C\sqrt{k}}{\alpha_{\Gamma\cup\gamma}}\quad\text{and}\quad N_0(k)=1,
	\]
	where $C>0$ is a constant which depends only on the geometry of $\Gamma$ and $\gamma$ and $\alpha_{\Gamma\cup\gamma}$ is the coercivity constant from Theorem \ref{th:ConstCoerc}.
\end{corollary}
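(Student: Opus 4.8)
The plan is to bypass the Fredholm/projection argument behind Lemma~\ref{uniquenessProj} and argue directly from coercivity. By Theorem~\ref{th:ConstCoerc}, under the hypotheses assumed here the constellation-combined operator $\mathcal{A}_k$ satisfies $|(\mathcal{A}_k\varphi,\varphi)_{L^2(\Gamma\cup\gamma)}|\ge\alpha_{\Gamma\cup\gamma}\|\varphi\|_{L^2(\Gamma\cup\gamma)}^2$ for all $\varphi\in L^2(\Gamma\cup\gamma)$. If in addition we have a continuity bound $\|\mathcal{A}_k\|_{L^2(\Gamma\cup\gamma)\shortrightarrow L^2(\Gamma\cup\gamma)}\le C\sqrt k$ with $C$ depending only on the geometry of $\Gamma$ and $\gamma$, then the Lax--Milgram theorem, in the modulus-coercivity form standard in this setting (cf.\ \citet{SpChGrSm:11}), shows that the Galerkin approximation of the reformulated boundary integral equation $\mathcal{A}_k\pdivl{u}{\bfn}=f_k$ exists and is unique for \emph{every} finite-dimensional subspace --- so no threshold dimension is needed and $N_0(k)=1$ --- with C\'ea-type quasi-optimality constant equal to $\|\mathcal{A}_k\|_{L^2\shortrightarrow L^2}/\alpha_{\Gamma\cup\gamma}\le C\sqrt k/\alpha_{\Gamma\cup\gamma}=:C_q(k)$.

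I would then connect this to the format of the earlier error estimates. Replacing $\mathcal{A}_{k,\eta},f_{k,\eta}$ by $\mathcal{A}_k,f_k$ throughout \S\ref{s:Galerkin_method} and inserting the ansatz \eqref{ansatz}, the reformulated method is equivalently the Galerkin approximation of $\mathcal{A}_k\psi=f_k$, $\psi:=\pdivl{u}{\bfn}$, with $\psi$ sought in the affine space $\Psi_0+\mathcal{G}_\square\V$, where $\Psi_0$ is the extension of $\Psi$ by zero to $\gamma$ and $\mathcal{G}_\square$ is the block operator of \eqref{cts_prob} (boundedly invertible, so $\mathcal{G}_\square\V$ has dimension $N$, and $\psi_N=\nu_N$). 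The abstract estimate above gives $\|\pdivl{u}{\bfn}-\nu_N\|_{L^2(\partial D)}=\|\psi-\psi_N\|_{L^2(\partial D)}\le C_q(k)\,\inf_{w_N\in\V}\|\mathcal{G}_\square(v-w_N)\|_{L^2(\partial D)}$, and the block form of $\mathcal{G}_\square$ together with Lemma~\ref{le:G_bound} yields $\|\mathcal{G}_\square(v-w_N)\|_{L^2(\partial D)}\le k\big(\|v_\Gamma-w_\Gamma\|_{L^2(\Gamma)}+[1+C_\calG(k)]\|v_\gamma-w_\gamma\|_{L^2(\gamma)}\big)$; this is exactly why $1+C_\calG(k)$ appears \emph{outside} $C_q(k)$ in Theorem~\ref{th:anz_approx_bd}. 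Combining with Theorem~\ref{co:bestApproxGamma}, Assumption~\ref{as:u_max_alg} and Assumption~\ref{as:standard_BEM} reproduces the right-hand sides of Theorem~\ref{th:anz_approx_bd} and Corollary~\ref{cor:domainErr} with the stated $C_q(k)$ and $N_0(k)=1$.

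The main obstacle is the continuity bound $\|\mathcal{A}_k\|_{L^2(\Gamma\cup\gamma)\shortrightarrow L^2(\Gamma\cup\gamma)}\le C\sqrt k$ with $C$ purely geometric. Here I would bound each term of $\mathcal{A}_k=(\mathbf{Z}\cdot\bfn)(\tfrac12\mathcal{I}+\mathcal{D}_k')+\mathbf{Z}\cdot\nabla_{S}\mathcal{S}_k-\imag\hat\eta\mathcal{S}_k$ separately: the multiplier functions $\mathbf{Z}\cdot\bfn$ and $|\mathbf{Z}|$, hence $|\hat\eta|\le k\,\diam(\Gamma\cup\gamma)+\tfrac12$, are controlled geometrically, while the $k$-explicit operator-norm bounds $\|\mathcal{D}_k'\|\lesssim k^{1/2}$, $\|\nabla_{S}\mathcal{S}_k\|\lesssim k^{1/2}$ and $\|\mathcal{S}_k\|\lesssim k^{-1/2}$ (so $\|k\,\mathcal{S}_k\|\lesssim k^{1/2}$) follow from the uniform Hankel-function asymptotics of \citet{ChGrLaLi:09} and the usual Cauchy--Schwarz/kernel estimates for these boundary operators on Lipschitz, piecewise-smooth boundaries. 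The delicate point is to extract the \emph{sharp} power $k^{1/2}$ --- rather than $k^{1/2}\log k$ --- and to keep the residual constant depending only on $\Gamma$ and $\gamma$, so that $C$ in the statement is genuinely geometric; the remaining steps are routine bookkeeping, and the appearance of $\mathcal{G}_\square$ causes no difficulty since it is boundedly invertible.
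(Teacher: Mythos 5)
Your overall skeleton is exactly the paper's: coercivity from Theorem~\ref{th:ConstCoerc}, the Lax--Milgram theorem to get existence and uniqueness of the Galerkin solution on \emph{every} finite-dimensional subspace (hence $N_0(k)=1$), and C\'ea's lemma to get $C_q(k)=\|\calA_k\|_{L^2(\Gamma\cup\gamma)\shortrightarrow L^2(\Gamma\cup\gamma)}/\alpha_{\Gamma\cup\gamma}$. Your second paragraph, relating this abstract quasi-optimality back to the block form with $\mathcal{G}_\square$ and the factor $1+C_\calG(k)$, is consistent with how Theorem~\ref{th:anz_approx_bd} is actually proved and is harmless extra bookkeeping.

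The genuine gap is in the last paragraph, where you try to establish $\|\calA_k\|_{L^2\shortrightarrow L^2}\leq C\sqrt{k}$ by bounding $(\mathbf{Z}\cdot\bfn)\calD_k'$, $\mathbf{Z}\cdot\nabla_S\calS_k$ and $\hat\eta\,\calS_k$ separately. The individual bounds you invoke are not available at the claimed sharpness for the boundaries considered here: \citet{ChGrLaLi:09} gives $\|\calS_k\|\lesssim k^{-1/2}$ on Lipschitz curves, but the general Lipschitz bound on $\calD_k'$ is $O(k^{3/4})$, not $O(k^{1/2})$, and $\nabla_S\calS_k$ is no better behaved termwise. The known $O(k^{1/2})$ bound for the star-combined operator --- \citet[Theorem~4.2]{SpChGrSm:11}, which is all the paper cites at this point --- is obtained precisely by \emph{not} splitting the operator: one uses $(\mathbf{Z}\cdot\bfn)\calD_k'+\mathbf{Z}\cdot\nabla_S\calS_k=\mathbf{Z}\cdot\nabla\calS_k$, whose kernel $\mathbf{Z}(\bfx)\cdot\nabla_\bfx\Phi_k(\bfx,\bfy)-\imag\hat\eta(\bfx)\Phi_k(\bfx,\bfy)$ is exactly the $K(\bfx,\bfy)$ the paper writes down in the appendix before \eqref{eq:starCombInteraction}, and whose cancellation structure is what delivers the sharp power. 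So your self-identified ``main obstacle'' is real, and the route you sketch around it would at best yield $C_q(k)\lesssim k^{3/4}/\alpha_{\Gamma\cup\gamma}$, which is not the stated result. The fix is simply to cite \citet[Theorem~4.2]{SpChGrSm:11} for the diagonal (single star-shaped) blocks and use the off-diagonal estimate \eqref{eq:starCombInteraction} already established in the appendix for the cross terms, rather than re-deriving the norm bound operator by operator.
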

\begin{proof}
Given that the conditions of Lemma \ref{uniquenessProj} hold, our formulation is coercive. It follows by the Lax--Milgram Theorem that $N_0(k)=1$. It follows by C\'ea's Lemma that the quasi-optimality constant is
\begin{equation}\label{eq:Cea}
C_q(k)=\frac{\|\calA_k\|_{L^2(\Gamma\cup\Gamma)\shortrightarrow L^2(\Gamma\cup\Gamma)}}{\alpha_{\Gamma\cup\gamma}}.
\end{equation}
The norm in the numerator of \eqref{eq:Cea} is $O(k^{1/2})$ for all $k\geq k_0$ \citep[Theorem~4.2]{SpChGrSm:11}.
\end{proof}

Finally, we remark that for a given geometry $\resub{\Omega}\cup\resub{\omega}$, there exists a $k_1>0$ such that for all $k\geq k_1$, Theorem \ref{th:ConstCoerc} cannot guarantee coercivity, and consequentially the statements of Corollary \ref{cor:Cea} may not be valid. This is because the negative component of $\alpha_{\Gamma\cup\gamma}$ (as defined in Theorem \ref{th:ConstCoerc}) will become larger in magnitude as $k$ increases, whilst the positive component remains fixed; we require $\alpha_{\Gamma\cup\gamma}>0$ to ensure coercivity.
\end{document}